\documentclass{article}

\usepackage{microtype}
\usepackage{graphicx}
\usepackage{subcaption}
\usepackage{booktabs} % for professional tables
\usepackage{bm}
\usepackage{hyperref}

\usepackage[accepted]{icml2026}

\usepackage{amsmath}
\usepackage{amssymb}
\usepackage{mathtools}
\usepackage{amsthm}

\usepackage[capitalize,noabbrev]{cleveref}

\theoremstyle{plain}
\newtheorem{theorem}{Theorem}[section]

\newtheorem{assumption}[theorem]{Assumption}
\newtheorem{proposition}[theorem]{Proposition}
\newtheorem{lemma}[theorem]{Lemma}
\newtheorem{corollary}[theorem]{Corollary}
\theoremstyle{definition}

\usepackage[textsize=tiny]{todonotes}
\usepackage{float}

\icmltitlerunning{Dimension-Free Multimodal Sampling via Preconditioned Annealed Langevin Dynamics}
\begin{document}

\twocolumn[
  \icmltitle{Dimension-Free Multimodal Sampling\\ via Preconditioned Annealed Langevin Dynamics}

  % It is OKAY to include author information, even for blind submissions: the
  % style file will automatically remove it for you unless you've provided
  % the [accepted] option to the icml2026 package.

  % List of affiliations: The first argument should be a (short) identifier you
  % will use later to specify author affiliations Academic affiliations
  % should list Department, University, City, Region, Country Industry
  % affiliations should list Company, City, Region, Country

  % You can specify symbols, otherwise they are numbered in order. Ideally, you
  % should not use this facility. Affiliations will be numbered in order of
  % appearance and this is the preferred way.
  \icmlsetsymbol{equal}{*}

  \begin{icmlauthorlist}
    \icmlauthor{Lorenzo Baldassari}{yyy}
    \icmlauthor{Josselin Garnier}{comp}
    \icmlauthor{Knut S{\o}lna}{sch}
    \icmlauthor{Maarten V. de Hoop}{vvv}
  \end{icmlauthorlist}

  \icmlaffiliation{yyy}{University of Basel}
  \icmlaffiliation{comp}{\'Ecole Polytechnique, IP Paris}
  \icmlaffiliation{sch}{University of California Irvine}
  \icmlaffiliation{vvv}{Rice University}

  \icmlcorrespondingauthor{Lorenzo Baldassari}{lorenzo.baldassari@unibas.ch} 

  % You may provide any keywords that you find helpful for describing your
  % paper; these are used to populate the "keywords" metadata in the PDF but
  % will not be shown in the document
  \icmlkeywords{Machine Learning, ICML}

  \vskip 0.3in
]

% this must go after the closing bracket ] following \twocolumn[ ...

% This command actually creates the footnote in the first column listing the
% affiliations and the copyright notice. The command takes one argument, which
% is text to display at the start of the footnote. The \icmlEqualContribution
% command is standard text for equal contribution. Remove it (just {}) if you
% do not need this facility.

% Use ONE of the following lines. DO NOT remove the command.
% If you have no special notice, KEEP empty braces:
\printAffiliationsAndNotice{}  % no special notice (required even if empty)
% Or, if applicable, use the standard equal contribution text:
% \printAffiliationsAndNotice{\icmlEqualContribution}

\begin{abstract}
Designing sampling algorithms for multimodal targets that remain stable under refinement of the finite-dimensional approximation of an underlying function-space problem is a central challenge. Annealed Langevin dynamics (ALD) is a natural alternative to classical Langevin in this context, since it is often observed to improve exploration across modes. Yet a gap remains between its empirical success and existing theory: under which conditions can ALD be guaranteed to remain stable across dimensions? In this paper, we bridge this gap by providing a uniform-in-dimension analysis of continuous-time ALD for Gaussian-mixture targets. Along an explicit annealing path obtained by gradually removing Gaussian smoothing from the target, we identify spectral conditions linking the smoothing covariance to the component covariances under which ALD achieves a prescribed accuracy in Kullback--Leibler divergence within a dimension-uniform time horizon. We then establish stability in a perturbative regime with imperfect initialization and approximate scores. Under a misspecified-mixture score model, we show that preconditioning ALD with an operator whose spectrum decays sufficiently fast prevents error terms from accumulating across coordinates and thereby preserves dimension-uniform control. 
\end{abstract}

\section{Introduction}
Langevin dynamics is a standard tool for sampling from a given probability distribution \cite{parisi1981correlation,roberts1996exponential,durmus2017nonasymptotic, dalalyan2017theoretical, durmus2019high}. Its appeal lies in the fact that it defines a dynamics whose invariant law is the target distribution. Its limitations, however, are also well known. While it can mix rapidly when the target is strongly log-concave or satisfies suitable isoperimetric inequalities \cite{durmus2019analysis,vempala2019rapid,chewi2025analysis}, its performance can deteriorate dramatically outside these regimes, especially for multimodal distributions: transitions between modes may occur only on very long time scales, leading to poor finite-time sampling accuracy, and this difficulty typically becomes more severe as the dimension increases \cite{ma2019sampling,schlichting2019poincare,dong2022spectral}. Rigorous results document such failures in a variety of settings, including Bayesian posteriors arising from nonlinear inverse problems \cite{bohr2021log,bandeira2023free,nickl2023bayesian}. 

These challenges motivate annealed Langevin dynamics (ALD) as a sampling procedure. Building on the empirical success of annealing-based methods \cite{kirkpatrick1983optimization,gelfand1990sampling,neal2001annealed}, ALD applies Langevin dynamics along a path of intermediate distributions: it starts from a heavily smoothed, and hence easier-to-sample, law, and then gradually removes the smoothing until the path approaches the desired target \cite{song2019generative,block2020generative}. Empirically, this annealing strategy is often reported to improve exploration across modes and to yield substantially better finite-time sampling accuracy  \cite{song2020improved,zilberstein2022annealed, sun2024provable}. Recent work has begun to clarify why ALD can outperform classical Langevin under minimal assumptions, showing that annealing can improve
worst-case complexity bounds for reaching a prescribed Kullback--Leibler
accuracy from exponential to polynomial dependence on dimension
\cite{guo2024provable}. Such bounds, however, may still deteriorate as the dimension increases. This leaves open the stronger question of whether ALD can provide dimension-uniform sampling guarantees, as required when finite-dimensional approximations of an infinite-dimensional multimodal target are progressively refined.
 This question has been studied extensively for Langevin diffusion \cite{hairer2005analysis, hairer2007analysis}, but has not yet been rigorously addressed for ALD. Dimension-robustness, however, is most critical in multimodal sampling problems, where classical Langevin can fail.

A simple illustration suggests that this goal may not be out of reach. Figure~\ref{fig:step-to-accuracy} considers, for each dimension $d$, sampling from a bimodal Gaussian mixture on $\mathbb R^d$ with weights $(w_1,w_2)=(0.75,0.25)$, means $(0,10e_1)$, and covariances $\tau_1\Sigma^d$ and $\tau_2\Sigma^d$, where $\tau_1=1.2$, $\tau_2=2$, and $\Sigma^d$ has a decaying spectrum $(j^{-2})_{j=1}^d$. Fixing $\epsilon=0.3$, we ask whether the number of ALD steps needed to reach this accuracy grows with $d$. The target is clearly multimodal, and under a default flat-spectrum choice of smoothing and preconditioning, ALD already exhibits a pronounced dependence on $d$, in line with existing pessimistic bounds \cite{guo2024provable}; for $d>10$ the required number of steps exceeds our $20000$-step cap.
However, with suitable design choices, in particular replacing a flat smoothing spectrum with a sufficiently decaying one, ALD reaches the prescribed accuracy with no visible deterioration in the required number of steps as the dimension increases.

\begin{figure}[H]
    \centering
    \includegraphics[width=1\linewidth]{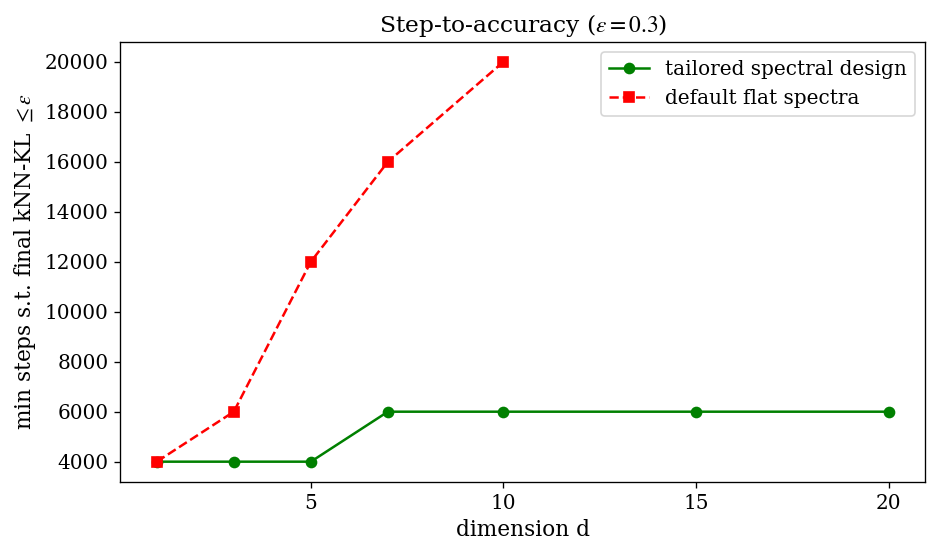}
    \caption{ALD step count vs. dimension.
    Number of ALD steps required for the empirical $\mathrm{KL}(\rho_\star^d\|\rho^{\mathrm{ALD},d}_T)$ to fall below the prescribed accuracy $\epsilon=0.3$, plotted against the truncation dimension $d$. The red curve corresponds to the default flat-spectrum choice, with preconditioner $\Gamma^d=I$ and smoothing $C^d=40I$, while the green curve corresponds to a tailored spectral design: $\Gamma=\mathrm{Diag}(j^{-1.5})_{j=1}^d$ and $C=\mathrm{Diag}(40\cdot j^{-2.7})_{j=1}^d$. ALD is described in Section~\ref{sec:ALD}.
    }
    \label{fig:step-to-accuracy}
\end{figure}

Although only illustrative, this toy example highlights a gap between existing theory and the empirical behavior observed here. This motivates the central question of this paper:
\emph{Can ALD remain stable as the dimension increases, and can this be proved
rigorously?}
Our main message is that it can. Specifically, we identify an ALD design that yields dimension-uniform sampling guarantees for multimodal distributions in continuous time, and we derive conditions under which these guarantees remain stable under score and initialization misspecification. Crucially, this stability is not automatic: it  relies on suitable preconditioning of the ALD diffusion.

\begin{figure*}[ht!]
  \vskip 0.2in
  \begin{center}
    \centerline{\includegraphics[width=\linewidth]{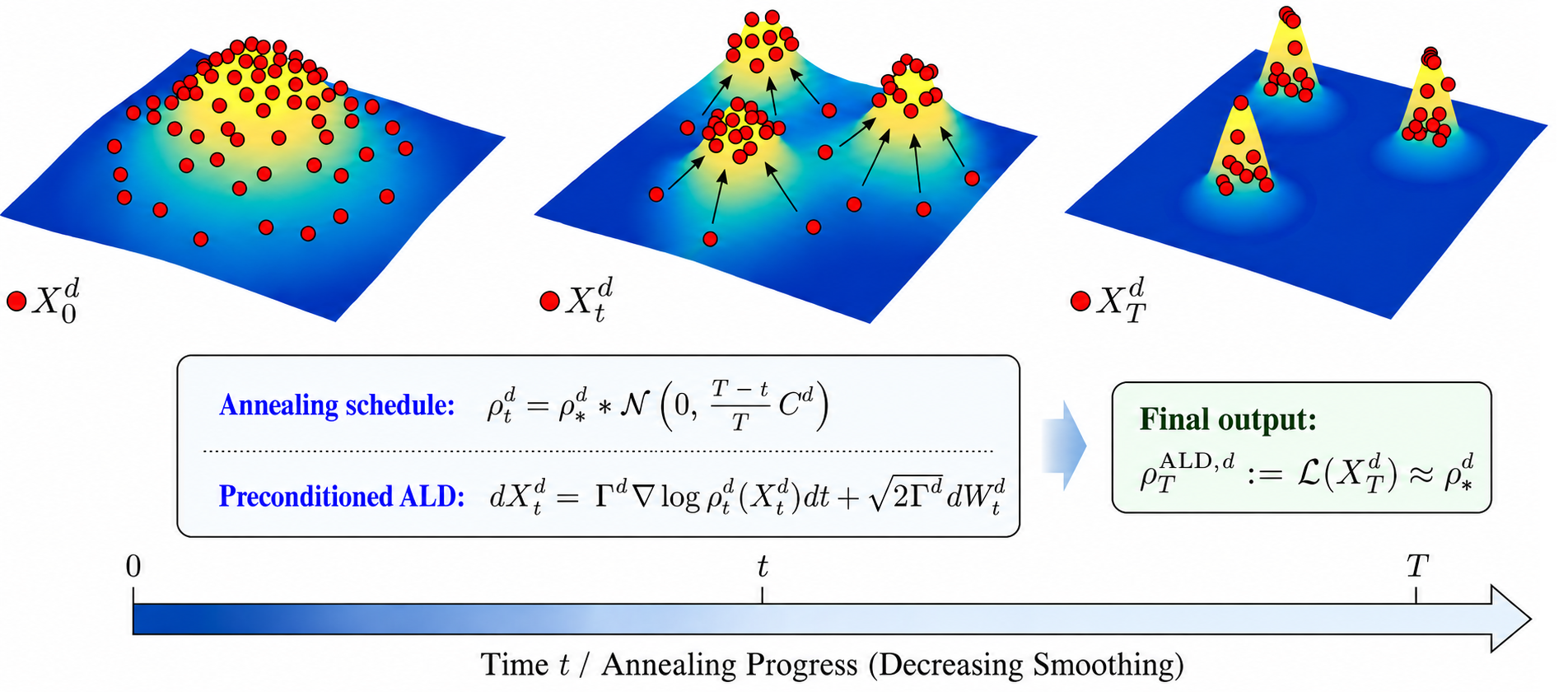}}
    \caption{
      Annealing from smoothing to multimodality. The annealed Langevin dynamics scheme considered here starts from a heavily smoothed, and hence tractable, version of the target and simulates a Langevin diffusion
whose drift is adapted to the current smoothing level. As
time progresses, the smoothing is gradually removed, so
the drift guides the dynamics from an almost unimodal distribution
towards the original multimodal target through a controlled
``complexification'' of the landscape.
    }
    \label{fig:annealing-langevin-dynamics}
  \end{center}
\end{figure*}

Our analysis is carried out for Gaussian mixture models (GMMs), a classical and widely used family for  approximating multimodal distributions \cite{mclachlan2000finite}. For each dimension $d$, we consider targets of the form
\[
\rho_\star^d \; =\; \sum_{i \in I} w_i \mathcal N(m_i^d,\Sigma_i^d),
\]
where $I$ may be finite or  countably infinite, the weights satisfy $w_i >0 $ and $\sum_{i\in I} w_i =1$, and $(m_i^d, \Sigma_i^d)$ denote the component means and covariances in $\mathbb{R}^d$. 
We view the family $(\rho_\star^d)_{d\geq1}$ as a sequence of increasingly accurate finite-dimensional approximations of an infinite-dimensional target, obtained for example by projecting the target onto its first $d$ coordinates;  see Section~\ref{sec:setting}. This setting is general yet tractable: it lets us make explicit how the annealing schedule, the preconditioner, and the mixture geometry (means, covariances, and weights) jointly determine when the continuous-time ALD
diffusion achieves a prescribed sampling accuracy within a dimension-uniform
time horizon; see Section~\ref{sec:ALD}. It also allows us to analyze the effect of score and initialization mismatch; see Section~\ref{sec:error-analysis}. These results underline the importance of preconditioning for stability as the dimension increases; this is illustrated numerically in Section~\ref{sec:numerics}. Combined with the discretization analysis in~\cite{baldassari2026dimension}, our results provide a unified picture of ALD in infinite dimensions.

\subsection*{Related Work}

Annealing-based strategies have a long history in sampling
\cite{kirkpatrick1983optimization,gelfand1990sampling,neal2001annealed}.
They replace direct sampling from the target by sampling along a path of
intermediate distributions, starting from an easier law and gradually
transforming it into the target. Implementing this principle through a
Langevin diffusion driven by a time-dependent score schedule leads to annealed
Langevin dynamics (ALD).

Recent work has begun to develop theoretical guarantees for ALD and related annealing-based samplers, both from the sampling and generative-modeling perspectives, often under minimal assumptions on the target distribution \cite{lee2018beyond,guo2024provable,vacher2024sampling,cattiaux2025diffusion,cordero2025non}. Part of this renewed interest is connected to the popularity of score-based generative models \cite{song2019generative,song2020improved,block2020generative,song2020score,albergo2025stochastic}, which rely on time-dependent drifts. A related example in conditional generation, discussed in Appendix~\ref{app:CFG}, is classifier-free guidance \cite{Pavasovic25,Yehezkel25,Wang24}, where conditional and unconditional scores are combined with a guidance weight that is often time-dependent, stronger at early times and tapered later.

Among theoretical analyses of ALD, the closest result to ours is that of \citet{guo2024provable}: as in that work, we formulate guarantees in terms of Kullback--Leibler (KL) divergence, which implies convergence in total-variation distance, and bound the global error along the entire curve of probability measures rather than focusing only on the local error at final time, following an approach inspired by \cite{dalalyan2012sparse,chen2022sampling}. Compared to \citet{guo2024provable}, however, our goal is not only to quantify the benefit of annealing for a fixed finite-dimensional target, but to prove dimension-robustness for a family of successive finite-dimensional approximations of an infinite-dimensional multimodal target. We also study a perturbative regime in which the score and the initialization are misspecified, and ask whether ALD remains stable under these errors. This is a standard question in sampling \cite{huggins2017quantifying,dalalyan2017theoretical,dalalyan2019user}, but it requires special care here because errors can accumulate across high-frequency coordinates as the dimension increases.

One main contribution of our analysis is to make explicit how dimension-uniform sampling guarantees for ALD can be enforced through the spectral properties of both the annealing geometry and the preconditioner. This connects naturally to the classical literature on function-space sampling, where the target is a probability measure on a function space and computation relies on finite-dimensional approximations \cite{stuart2010inverse,cotter2013mcmc,hairer2014spectral}. These works have long emphasized that formulating the sampling problem directly in infinite dimension makes it possible to design principled preconditioning schemes whose behavior remains stable as the approximation is refined \cite{roberts1998optimal,hairer2005analysis,hairer2007analysis,cotter2013mcmc,hairer2014spectral,cui2016dimension,beskos2017geometric}; this is a principle we follow here.

Similar ideas have recently received renewed attention in the context of infinite-dimensional diffusion and score-based generative models \cite{kerrigan2022diffusion,franzese2023continuous,baldassari2023conditional,pidstrigach2024infinite,bond2024diff,baldassari2024taming,lim2025score,hagemann2025multilevel,baldassari2026preconditioned,franzese2025generative}. While our work assumes access to the annealed scores and takes a sampling perspective, it can be viewed as parallel to recent analyses of preconditioned score-based generative models \cite{pidstrigach2024infinite,baldassari2026preconditioned}, with the crucial difference that the preconditioning conditions derived here are tailored to multimodal target distributions.

\section{Problem Setting}\label{sec:setting}

Our goal is to understand when continuous-time annealed Langevin dynamics can sample
robustly across the successive refinements $(\rho_\star^d)_{d\geq1}$ of an infinite-dimensional multimodal target distribution $\rho_\star^\infty$. As the dimension $d$ grows, resolving progressively finer features of the same target, we ask (i) whether the sampling error can be controlled uniformly in $d$ within a single time horizon when the annealed scores are available exactly; and (ii) under which conditions this control remains stable under approximation errors in the annealed score and imperfect initialization.

We formulate this question in a Hilbert space setting where refinement has a
simple spectral meaning.  Let $H$ be a separable Hilbert space
with orthonormal basis $(e_j)_{j\ge1}$. We consider a Gaussian mixture on $H$ of the form
\[
\rho_\star^\infty \;=\; \sum_{i\in I} w_i\,\mathcal N(m_i,\Sigma_i),
\]
where $I$ is finite or countably infinite, $w_i>0$ for all $i\in I$, and $\sum_{i\in I}w_i=1$.
The truncations below will be taken in this basis, and we assume that the
component covariances are diagonal with respect to it: 
\[
\begin{aligned}
m_i = \sum_{j\ge1} m_{ij} e_j, \qquad
\Sigma_i e_j = \sigma_{ij} e_j, \qquad
\sigma_{ij} >0.
\end{aligned}
\]
At first sight, this diagonal assumption may appear restrictive. However, after  truncation to any fixed dimension $d$, it still gives a rich finite-dimensional
multimodal model: diagonal Gaussian mixtures, with sufficiently many components,
can approximate general mixtures in KL distance.

To ensure that $\rho_\star^\infty$ is a well-defined probability measure on $H$, we impose the following summability condition.

\begin{assumption}[Finite-energy]
\label{ass:hilbert}
For each component $i\in I$ of $\rho_\star^\infty=\sum_{i\in I} w_i\,\mathcal N(m_i,\Sigma_i)$,
\[
\begin{aligned}
\sum_{j\ge1} m^2_{ij} < \infty, \qquad \sum_{j\ge1} \sigma_{ij} < \infty.
\end{aligned}
\]
\end{assumption}
Assumption~\ref{ass:hilbert} is the Hilbert-space analogue of a finite-energy condition. It ensures that $m_i\in H$ and that the covariance operators $\Sigma_i$ are trace-class, so that each $\mathcal N(m_i,\Sigma_i)$ defines a Gaussian measure on $H$ \cite{hairer2009introduction}. Hence $\rho_\star^\infty$ is a well-defined mixture measure on $H$, and its finite-dimensional refinements can be obtained by projection.

More precisely, the $d$-dimensional targets are the marginals of this fixed
measure on the first $d$ basis coordinates. Let $P_d$ be the orthogonal projection onto
$\mathrm{span}\{e_1,\dots,e_d\}$. We set
\begin{equation} 
\label{marginal}
\rho_\star^d \;:=\; (P_d)_\# \rho_\star^\infty
\;=\; \sum_{i\in I} w_i\,\mathcal N(m_i^d,\Sigma_i^d),
\end{equation} 
where 
\[ 
m_i^d=(m_{i1},\dots,m_{id}), \qquad
\Sigma_i^d=\operatorname{Diag}(\sigma_{i1},\dots,\sigma_{id}).
\]
Increasing $d$ therefore adds new coordinates in the same basis while leaving the previously retained coordinates unchanged. The sequence $(\rho_\star^d)_{d\ge1}$ represents successive finite-dimensional approximations of a single target $\rho_\star^\infty$, not a collection of unrelated high-dimensional problems. This is the regime in which dimension-uniform control is meaningful.

\section{Annealed Langevin Dynamics}\label{sec:ALD}

The targets $\rho_\star^d$ are multimodal, and direct Langevin sampling is  well known to struggle in this setting, even at moderate dimension. In this section, we introduce annealed Langevin dynamics (ALD), which replaces direct sampling  from $\rho_\star^d$ by Langevin diffusion along a prescribed
sequence of progressively less smoothed targets. We focus on the Gaussian-smoothing variant of ALD, sketched in Figure~\ref{fig:annealing-langevin-dynamics} and formalized below in two steps.

\smallskip 
\textbf{Step 1 (annealing schedule).} Let $C^d$ be a positive-definite matrix, diagonal in the basis $(e_j)_{j=1}^d$ with
eigenvalues $(\lambda_j)_{j=1}^d$. We assume that these eigenvalues are the first $d$ terms
of a summable sequence $(\lambda_j)_{j\geq1}$, namely
\begin{equation*}
\sum_{j\ge1}\lambda_j<\infty.
\end{equation*}
Thus the limiting smoothing covariance is trace-class, so that the
corresponding Gaussian perturbation is $H$-valued.

For a time horizon $T>0$, we use the linear smoothing schedule
\[
\kappa_t := \frac{T-t}{T}, \qquad t\in[0,T],
\]
and define the Gaussian-smoothing annealing path
\begin{equation}
\label{eq:annealing-path}
\rho_t^d
\; =\; \rho_\star^d * \mathcal N\!\Big(0,\;\kappa_t\,C^d\Big),
\qquad
t\in[0,T].
\end{equation}
Here $\kappa_t C^d$ is the added covariance, which decreases linearly from
$C^d$ to $0$. Thus the path interpolates from the smoothed law
$\rho_0^d=\rho_\star^d*\mathcal N(0,C^d)$ to the target
$\rho_T^d=\rho_\star^d$: initially the target is smoothed and easier to explore,
while as $t\to T$ the smoothing vanishes. 

Finally, the linear choice of $\kappa_t$ is made only for
clarity: all arguments below extend to regular decreasing schedules with
$\kappa_0=1$ and $\kappa_T=0$.

\smallskip
\noindent\textbf{Step 2 (time-inhomogeneous dynamics).} We next define a Langevin diffusion driven by the time-dependent drift $\nabla\log\rho_t^d$:
starting from the initial smoothed law, the dynamics uses progressively less
smoothed scores as $t$ approaches $T$. Specifically, we initialize $X_0^d\sim\rho_0^d$ and evolve \begin{equation}
\label{eq:ALD}
dX_t^d
=\Gamma^d \nabla\log\rho_t^d(X_t^d)\,dt
+\sqrt{2\Gamma^d}\,dW_t^d,
\qquad
t\in[0,T],
\end{equation}
where $(W_t^d)_{t\ge 0}$ is a standard Brownian motion in $\mathbb{R}^d$, and $\Gamma^d$ is a
positive-definite matrix, diagonal in the basis $(e_j)_{j=1}^d$ with eigenvalues $(\gamma_j)_{j=1}^d$. The matrix $\Gamma^d$ acts as a preconditioner, and its role will be discussed later.

The ALD output distribution, which we use as an approximation of $\rho_\star^d$, is the law of the process at the final time:  
\begin{equation*} \rho_T^{\mathrm{ALD},d}\;:=\;\operatorname{Law}(X_T^d).
\end{equation*}

There is an important distinction to keep in mind, which helps clarify the analysis that follows. Standard Langevin dynamics targeting $\rho_\star^d$ uses the fixed drift $\nabla\log\rho_\star^d$. In contrast, ALD uses the time-dependent score $\nabla\log\rho_t^d$, so the dynamics \eqref{eq:ALD} is time-inhomogeneous. Consequently, one should not expect the law
$\rho_t^{\mathrm{ALD},d}:=\operatorname{Law}(X_t^d)$ to coincide with the instantaneous target
$\rho_t^d$; in particular, $\rho_T^{\mathrm{ALD},d}$ may differ from $\rho_\star^d$.
We quantify this  annealing-induced bias at the final time $T$ through the Kullback--Leibler divergence 
\[
 \mathcal{B}_{\mathrm{ann}}^d(T)\; :=\; \mathrm{KL}\!\big(\rho_\star^d\,\|\,\rho_T^{\mathrm{ALD},d}\big).
\]
In what follows,  we show that the time $T$ required to make this bias
small is dictated by the schedule, by the smoothing and preconditioning geometries
through $C^d$ and $\Gamma^d$, and by the mixture covariances
$(\Sigma_i^d)$. Under suitable spectral conditions, this time can be chosen uniformly in dimension.

\smallskip

\subsection*{Annealing-Induced Bias}
To isolate the source of this bias, it is convenient to introduce a reference
time-inhomogeneous diffusion that \emph{does} follow the annealing path exactly.
Specifically, we consider a time-dependent vector field $v^d=(v_t^d)_{t\in[0,T]}$ and the SDE
\begin{equation}
\label{eq:ideal-ALD}
dY_t^d=\Gamma^d\big(\nabla\log\rho_t^d+v_t^d\big)(Y_t^d)\,dt+\sqrt{2\Gamma^d}\,dW_t^d,
\end{equation}
where $v^d$ is chosen so that $\operatorname{Law}(Y_t^d)=\rho_t^d$ for all $t\in[0,T]$. Any such correction field induces the path-matching energy,
\begin{equation}
\label{eq:path-matching-energy}
\mathcal{J}_{\mathrm{ann}}^{v^d}(T)\; := \; \frac{1}{4} \int_0^T \int_{\mathbb R^d} \|(\Gamma^d)^{1/2} v_t^d(x)\|^2 \, d\rho_t^d(x) \, dt,
\end{equation}
where, with a slight abuse of notation, we use $\rho_t^d$ both for the law and for its density. Crucially, this energy upper bounds the annealing-induced bias $\mathcal{B}_{\mathrm{ann}}^d(T)$. 

Thus, controlling the bias reduces to controlling the energy cost of the correction $v_t^d$ needed to match the annealing path, relative to the ALD drift
$\nabla\log\rho_t^d$ used in \eqref{eq:ALD}. This is not merely a technical convenience: in general, $v_t^d$ is inaccessible, since identifying it amounts to solving a high-dimensional Fokker--Planck
equation. Although for Gaussian mixtures such corrections can be characterized explicitly, we adopt this setting as  a tractable proxy  in which all quantities are explicit and the dependence on  $(C^d,\Gamma^d)$ and on the component covariances $(\Sigma_i^d)$ can be tracked.

This leads to the paper's first question: how large must
the annealing horizon be so that this bias stays within
a prescribed accuracy $\epsilon$? In particular, how does the required time horizon scale with the dimension $d$?  Theorem~\ref{thm:ALD-dim-explicit} answers this question by giving a 
time horizon $T$, depending explicitly on $d$, such that 
\[
\mathrm{KL}\!\big(\rho_\star^d\,\|\,\rho_{T}^{\mathrm{ALD},d}\big) \leq \epsilon. 
\]
We then use this expression to identify regimes in which the same accuracy $\epsilon$ can be achieved with a time horizon chosen uniformly in $d$. The result is proved in Appendix~\ref{app:sec3} under exact-score and perfect-initialization assumptions; both are relaxed in the next section.

\begin{theorem}\label{thm:ALD-dim-explicit}
Fix $d\geq 1$ and $\epsilon >0$. Define \[
\mathcal{K}_d
:= \frac{1}{16}\sum_{i\in I} w_i \sum_{j=1}^d \frac{\lambda_j}{\gamma_j}
\log\!\Big(1+\frac{\lambda_j}{\sigma_{ij}}\Big).
\]
Consider the ALD dynamics up to time 
\[
T :=  \epsilon^{-1}\,\mathcal{K}_d.
\]
Then $\mathcal{B}_{\mathrm{ann}}^d(T)
\le \epsilon.$
\end{theorem}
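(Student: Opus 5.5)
The plan is to combine the fact, stated above, that the path-matching energy $\mathcal{J}_{\mathrm{ann}}^{v^d}(T)$ upper bounds $\mathcal{B}_{\mathrm{ann}}^d(T)$ with an explicit choice of correction field $v^d$. We then show $\mathcal{J}_{\mathrm{ann}}^{v^d}(T)\le \mathcal{K}_d/T$, which equals $\epsilon$ for $T=\epsilon^{-1}\mathcal{K}_d$. The energy–bias inequality itself comes from Girsanov plus data processing, since both processes start from $\rho_0^d$. I take it as given, checking only that the Novikov-type or finite-energy conditions hold for the field below. This should be routine for Gaussian mixtures: the scores grow at most linearly, and the second moments are summable even for countable $I$.

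\textbf{Step 1: construct $v^d$.} Because $\rho_t^d$ is the law of $X+\sqrt{\kappa_t}\,G$ with $G\sim\mathcal N(0,C^d)$ independent of $X$, it solves the heat-type equation
\[
\partial_t\rho_t^d=\tfrac{\dot\kappa_t}{2}\nabla\cdot(C^d\nabla\rho_t^d)=-\tfrac{1}{2T}\nabla\cdot(C^d\nabla\rho_t^d).
\]
The Fokker--Planck equation of \eqref{eq:ideal-ALD}, evaluated at $\rho_t^d$, reduces to $\partial_t\rho_t^d=-\nabla\cdot(\Gamma^d v_t^d\rho_t^d)$, because the Langevin part $\Gamma^d\nabla\log\rho_t^d$ together with the diffusion leaves $\rho_t^d$ invariant. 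Matching the two equations suggests
\[
v_t^d=\tfrac{1}{2T}(\Gamma^d)^{-1}C^d\nabla\log\rho_t^d .
\]
With this choice the law of $Y_t^d$ follows $\rho_t^d$, by uniqueness for the linear Fokker--Planck equation with locally Lipschitz, linearly growing drift. Substituting into \eqref{eq:path-matching-energy} and using that all matrices are diagonal gives
\[
\mathcal{J}_{\mathrm{ann}}^{v^d}(T)=\frac{1}{16T^2}\int_0^T\sum_{j=1}^d\frac{\lambda_j^2}{\gamma_j}\,\mathbb E_{\rho_t^d}\big[(\partial_j\log\rho_t^d)^2\big]\,dt .
\]

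\textbf{Step 2 (main step): bound the coordinatewise Fisher information.} Write $\rho_t^d=\sum_i w_i\,\mathcal N(m_i^d,\Sigma_i^d+\kappa_tC^d)$. The map $(\rho,g)\mapsto g^2/\rho$ is jointly convex, and $\partial_j\rho_t^d=\sum_i w_i\,\partial_j\phi_i$. Hence
\[
\mathbb E_{\rho_t^d}[(\partial_j\log\rho_t^d)^2]\le\sum_i w_i\,\mathbb E_{\phi_i}[(\partial_j\log\phi_i)^2]=\sum_i\frac{w_i}{\sigma_{ij}+\kappa_t\lambda_j}.
\]
The last equality uses that the $j$-th coordinate Fisher information of a diagonal Gaussian is the inverse of its variance. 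For countable $I$, convexity passes to the series by Fatou or monotone convergence, so nothing beyond $\sum_i w_i=1$ is needed. This Jensen step is where multimodality is tamed: the mixture score can be large between modes, but on average it is controlled componentwise.

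\textbf{Step 3: integrate in time.} Substituting $s=\kappa_t$, so that $dt=-T\,ds$, gives
\[
\int_0^T\frac{\lambda_j}{\sigma_{ij}+\kappa_t\lambda_j}\,dt=T\log\!\Big(1+\frac{\lambda_j}{\sigma_{ij}}\Big).
\]
Combining the three steps,
\[
\mathcal{J}_{\mathrm{ann}}^{v^d}(T)\le\frac{1}{16T}\sum_i w_i\sum_{j=1}^d\frac{\lambda_j}{\gamma_j}\log\!\Big(1+\frac{\lambda_j}{\sigma_{ij}}\Big)=\frac{\mathcal{K}_d}{T}=\epsilon,
\]
and therefore $\mathcal{B}_{\mathrm{ann}}^d(T)\le\epsilon$.

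The only delicate point I expect is the justification of Girsanov and of the exchange of sums with expectations for infinite $I$. Everything else is explicit computation.
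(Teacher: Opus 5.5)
Your proposal is correct and follows the paper's proof essentially step by step. You use the same correction field $v_t^d=\frac{1}{2T}(\Gamma^d)^{-1}C^d\nabla\log\rho_t^d$ obtained from the backward heat equation, the Girsanov/data-processing bound, a Jensen step over the mixture components, and the substitution $s=\kappa_t$. Your coordinatewise joint-convexity (Fisher information) form of the Jensen step is the integrated version of the paper's pointwise Jensen inequality for the quadratic form $z\mapsto z^\top C^d(\Gamma^d)^{-1}C^d z$ weighted by the responsibilities, and it yields the same bound $\sum_i w_i\sum_j \lambda_j^2/\big(\gamma_j(\sigma_{ij}+\kappa_t\lambda_j)\big)$.
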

Theorem~\ref{thm:ALD-dim-explicit} shows that this bound yields a dimension-uniform horizon whenever
\[
\sup_{d\ge1}\mathcal K_d<\infty,
\]
or equivalently whenever
\[
\sum_{i\in I} w_i\sum_{j\ge1}\frac{\lambda_j}{\gamma_j}\log\!\Big(1+\frac{\lambda_j}{\sigma_{ij}}\Big)<\infty.
\]
In such regimes, the time horizon required by the bound to reach accuracy $\epsilon$ does not deteriorate as $d$ increases, formalizing the dimension-stability suggested by Figure~\ref{fig:step-to-accuracy}. 
Crucially, this condition can be enforced through the design of the annealing geometry and the
preconditioner. Indeed, by choosing the Gaussian-smoothing spectrum
$(\lambda_j)$ and the preconditioner spectrum $(\gamma_j)$ so that they are
compatible with the component covariance spectra $(\sigma_{ij})$, and using
$\log(1+u)\le u$ for $u\ge0$, it is enough to require
\begin{equation}
\label{eq:suff-condition}
\sum_{i\in I} w_i\sum_{j\ge1}\frac{\lambda_j^2}{\gamma_j\,\sigma_{ij}}<\infty.
\end{equation}

Finally, three remarks are worth noting:
\begin{itemize}
\item It may seem surprising that the component means $m_{ij}$ do not appear in \eqref{eq:suff-condition}. This is because Theorem~\ref{thm:ALD-dim-explicit} assumes an idealized setting; the means  $m_{ij}$ play a significant role in the perturbative analysis of the next section, where the score is imperfect or the initialization is misspecified. 
\item Theorem~\ref{thm:ALD-dim-explicit} assumes exact initialization $X_0^d\sim \rho_0^d$; it does not quantify the cost of sampling from $\rho_0^d$. This cost is affected by the smoothing $C^d$: stronger smoothing makes $\rho_0^d=\rho_\star^d*\mathcal N(0,C^d)$ closer to a unimodal proxy and typically easier to sample from, but it also increases $\mathcal K_d$ since the annealing path starts farther from $\rho_\star^d$. Conversely, weaker smoothing shortens the path but leaves $\rho_0^d$  more multimodal, and therefore  harder to sample from. 
\item One should not interpret the dependence of $\mathcal  K_d$ on $(\gamma_j)$ as saying that preconditioning is a cost-free accelerator. Multiplying $\Gamma^d$ by a large scalar reduces the ratios $\lambda_j/\gamma_j$ appearing in the annealing-bias bound, but this mainly rescales the time scale of the diffusion. Under an Euler--Maruyama time discretization, this requires a  correspondingly smaller stepsize to maintain stability; see~\cite{baldassari2026dimension}. Even in the continuous-time setting, however, there is a trade-off between making the annealing-bias condition smaller by increasing the coefficients $\gamma_j$ and choosing a preconditioner compatible with dimension-uniform stability. When score and initialization errors are introduced, a preconditioner whose spectrum decays sufficiently fast helps prevent coordinatewise errors from accumulating, as shown in the perturbative regime analyzed in the next section.
\end{itemize}

\section{Stability under Score and Initialization Perturbations}\label{sec:error-analysis}
We now move past the idealized setting of Section~\ref{sec:ALD} and ask
whether the dimension-uniform guarantees obtained there persist under
perturbations. 
To separate these  effects from the annealing-induced bias analyzed in the previous section, we use the ideal path-matching diffusion~\eqref{eq:ideal-ALD} as a reference and compare it
with an approximate ALD dynamics driven by a misspecified score $\widetilde s_t^{\, d}$ and initialized from a possibly incorrect law $\widetilde\rho_0^{\, d}$:
\begin{equation*}
d\widetilde{X}_t^d
=
\Gamma^d\, \widetilde s_t^{\, d}(\widetilde{X}_t^d)\,dt
+\sqrt{2\Gamma^d}\,d\widetilde{W}_t^d,
\qquad
\widetilde{X}_0^d\sim \widetilde\rho_0^{\, d}.
\end{equation*}
We denote its law by 
\[ 
\widetilde\rho_t^{\, \mathrm{ALD},d}\; := \; \operatorname{Law}(\widetilde{X}_t^d),
\] 
and
measure score mismatch through the pointwise error
\[
\epsilon^d_t(x):=\nabla \log \rho_t^d(x) - \widetilde s_t^{\, d}(x).
\]
The starting point of our analysis is the following upper bound, proved in Appendix \ref{proof:KL-error}.
\begin{proposition}\label{prop:KL-error}
We have the upper bound
\begin{equation*}
\mathrm{KL}\!\big(\rho_\star^d\,\|\,\widetilde\rho_T^{\,\mathrm{ALD},d}\big)
\;\leq\;
\mathcal E^d_{\mathrm{init}}
+
\mathcal E^d_{\mathrm{score},T}
+
\mathcal E^d_{\mathrm{bias},T},
\end{equation*}
where
\[
\mathcal E_{\mathrm{init}}^d
:=
\mathrm{KL}(\rho_0^d\,\|\,\widetilde\rho_0^{\,d}),
\]
\[
\mathcal E_{\mathrm{score},T}^d
:=
\frac12
\int_0^T\!\!\int
\big\|(\Gamma^d)^{1/2}\epsilon_t^d(x)\big\|^2
\,d\rho_t^d(x)\,dt,
\]
and
\[
\mathcal E_{\mathrm{bias},T}^d
:=
2
\!\!\inf_{\substack{v^d=(v_t^d)_{t\in[0,T]}:\\
\partial_t\rho_t^d=-\nabla\cdot(\rho_t^d\Gamma^d v_t^d)}}
\mathcal J_{\mathrm{ann}}^{v^d}(T),
\]
with $\mathcal{J}_{\mathrm{ann}}^{v^d}(T)$ defined in
\eqref{eq:path-matching-energy}.
\end{proposition}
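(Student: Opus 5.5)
Fix any admissible correction field $v^d$ and let $Y^d$ solve the path-matching diffusion \eqref{eq:ideal-ALD}, so that $\operatorname{Law}(Y_t^d)=\rho_t^d$ and in particular $\operatorname{Law}(Y_T^d)=\rho_\star^d$. By the data-processing inequality applied to the time-$T$ marginal map, $\mathrm{KL}(\rho_\star^d\|\widetilde\rho_T^{\,\mathrm{ALD},d})\le \mathrm{KL}(\mathbb P_Y\|\mathbb P_{\widetilde X})$, where $\mathbb P_Y,\mathbb P_{\widetilde X}$ are the path laws on $C([0,T];\mathbb R^d)$. I will then split the path-space KL by the chain rule into the initial-law KL and the conditional (Girsanov) part. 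The former is $\mathrm{KL}(\rho_0^d\|\widetilde\rho_0^{\,d})=\mathcal E^d_{\mathrm{init}}$. For the latter, both SDEs share the diffusion coefficient $\sqrt{2\Gamma^d}$. Girsanov's theorem therefore gives
\[
\frac14\int_0^T \mathbb E\big\|(2\Gamma^d)^{-1/2}\Gamma^d\big(\nabla\log\rho_t^d+v_t^d-\widetilde s_t^{\,d}\big)(Y_t^d)\big\|^2dt
=\frac14\int_0^T\!\!\int\big\|(\Gamma^d)^{1/2}(\epsilon_t^d+v_t^d)\big\|^2d\rho_t^d\,dt .
\]
To be careful about the constant: the KL between diffusions with drifts $b_1,b_2$ and noise $\sqrt{2\Gamma}$ is $\frac12\int\mathbb E\|(2\Gamma)^{-1/2}(b_1-b_2)\|^2$. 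This equals $\frac14\int\mathbb E\|\Gamma^{-1/2}(b_1-b_2)\|^2$, and here $b_1-b_2=\Gamma(\epsilon+v)$, which gives the display.

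Next I will use $\|a+b\|^2\le 2\|a\|^2+2\|b\|^2$. This bounds the conditional part by $\frac12\int\!\!\int\|(\Gamma^d)^{1/2}\epsilon_t^d\|^2d\rho_t^d\,dt+\frac12\int\!\!\int\|(\Gamma^d)^{1/2}v_t^d\|^2d\rho_t^d\,dt$. The first term is exactly $\mathcal E^d_{\mathrm{score},T}$. The second is $2\mathcal J^{v^d}_{\mathrm{ann}}(T)$ by \eqref{eq:path-matching-energy}. Since $v^d$ was an arbitrary field satisfying the continuity equation, taking the infimum yields $\mathcal E^d_{\mathrm{bias},T}$.

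One point needs checking: the constraint $\partial_t\rho_t^d=-\nabla\cdot(\rho_t^d\Gamma^d v_t^d)$ must be precisely the condition under which \eqref{eq:ideal-ALD} has marginals $\rho_t^d$. The Fokker--Planck equation of \eqref{eq:ideal-ALD} reads $\partial_t\mu=-\nabla\cdot(\mu\Gamma^d(\nabla\log\rho_t^d+v_t^d))+\nabla\cdot(\Gamma^d\nabla\mu)$. For $\mu=\rho_t^d$ the score term cancels the diffusion term, since $\rho\Gamma\nabla\log\rho=\Gamma\nabla\rho$. What remains is exactly the stated continuity equation, so any admissible $v^d$ makes $\rho_t^d$ a solution. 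Uniqueness of the Fokker--Planck solution then gives $\operatorname{Law}(Y_t^d)=\rho_t^d$.

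The main obstacle is technical rigor rather than the computation. Girsanov requires either Novikov-type integrability or a localization argument. I would follow the approach of \cite{chen2022sampling}: approximate by stopped processes, or invoke the KL chain rule for path measures directly under the finiteness of the energy, and pass to the limit by lower semicontinuity of KL. Fokker--Planck uniqueness must also be justified. For Gaussian mixtures with $\kappa_tC^d$ smoothing the scores are locally Lipschitz with linear growth on $[0,T)$, and possibly singular only as $t\to T$ if some $\sigma_{ij}$ is small. Since $d$ is fixed and $\sigma_{ij}>0$, the scores stay smooth even at $t=T$, which should make these steps routine. If an admissible $v^d$ has infinite energy, the bound is trivial.
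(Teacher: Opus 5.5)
Your proposal is correct and follows essentially the same route as the paper: data processing from path space to the time-$T$ marginal, the Girsanov identity (the paper's Lemma~\ref{lem:girsanov}) with drift difference $\Gamma^d(\epsilon_t^d+v_t^d)$, the inequality $\|a+b\|^2\le 2\|a\|^2+2\|b\|^2$ yielding $\mathcal E^d_{\mathrm{score},T}+2\mathcal J^{v^d}_{\mathrm{ann}}(T)$, and an infimum over admissible $v^d$. The only blemish is the left-hand side of your first display, where the prefactor in front of $\|(2\Gamma^d)^{-1/2}\Gamma^d(\cdot)\|^2$ should be $\frac12$ rather than $\frac14$; your own constant check fixes this, and your right-hand side is correct. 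Your Fokker--Planck verification of the continuity constraint and your remarks on Novikov-type localization address technical points that the paper leaves implicit.
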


The term $\mathcal E_{\mathrm{bias},T}^d$ is already controlled by the
path-matching argument of Section~\ref{sec:ALD}. Indeed, by definition,
\[
\mathcal E_{\mathrm{bias},T}^d
=
2\inf_{v^d} \mathcal J_{\mathrm{ann}}^{v^d}(T),
\]
and the proof of Theorem~\ref{thm:ALD-dim-explicit} shows that the same
path-matching energy is bounded by $\mathcal K_d/T$. Hence
\[
\mathcal E_{\mathrm{bias},T}^d
\le
\frac{2\mathcal K_d}{T}.
\]
Thus, the spectral regimes of Section~\ref{sec:ALD}, in particular \eqref{eq:suff-condition}, also control of $\mathcal E_{\mathrm{bias},T}^d$ uniformly in $d$. 

It remains to bound the initialization and score terms. We work in a perturbative regime in which the mixture parameters (weights, means, and covariances) are allowed to vary. This captures mode reweighting, mean shifts, and covariance distortions while preserving multimodality.

\begin{assumption}\label{assump:misspecified-mixture-score}
The approximate score is the score of a misspecified mixture along the same
annealing path:
\[
\widetilde s_t^{\, d}(x)=\nabla\log\widetilde\rho_t^{\,d}(x),
\]
where $\widetilde\rho_t^{\,d}$ is obtained by annealing a perturbed mixture with the same smoothing
covariance as in~\eqref{eq:annealing-path}, namely
\[
\widetilde\rho_t^{\,d}
=
\widetilde\rho_\star^{\,d} * \mathcal N\!\Big(0,\;\tfrac{T-t}{T}\,C^d\Big),
\qquad
\widetilde\rho_\star^{\,d}
:=
\sum_{i\in I} \widetilde w_i\,\mathcal N(\widetilde m_i^d,\widetilde\Sigma_i^d).
\]
The perturbed parameters are
\[
\widetilde{m}_i^d = m_i^d + \Delta m_i^d,
\qquad
\widetilde{\Sigma}_i^d = \Sigma_i^d + \Delta \Sigma_i^d,
\qquad
\widetilde{w}_i = w_i + \Delta w_i,
\]
with $(\widetilde w_i)_{i\in I}$ such that
$\widetilde w_i> 0$ and $\sum_{i\in I} \widetilde w_i=1$.
\end{assumption}

For the calculations that follow, we continue to work in the diagonal setting introduced in Section \ref{sec:setting}. In particular,  for each $d$ and each $i\in I$, we assume
\begin{equation}\label{eq:diag-assumption}
\Delta\Sigma_i^d=\mathrm{Diag}(\Delta\sigma_{i1},\dots,\Delta\sigma_{id}),
\end{equation}
so that 
\[
\widetilde\Sigma_i^d=\mathrm{Diag}(\sigma_{i1}+\Delta\sigma_{i1},\dots,\sigma_{id}+\Delta\sigma_{id}),
\]
with $\sigma_{ij}+\Delta\sigma_{ij}>0$.

\subsection{Initialization Mismatch $\mathcal{E}^d_{\mathrm{init}}$}
We begin with the initialization contribution. At $t=0$, the exact and perturbed
annealed laws have densities
\[
\rho_0^d (x) = \sum_{i \in I} w_i \varphi^d_{i,0}(x), \qquad \widetilde{\rho}_0^d(x) = \sum_{i \in I} \widetilde{w}_i \widetilde{\varphi}^d_{i,0}(x),
\]
where $\varphi^d_{i,0}$ and $\widetilde{\varphi}^d_{i,0}$ denote the Gaussian densities associated with the following laws, evaluated at $t=0$:
\[
\mathcal{N}(m_i^d, \Sigma_i^d + \kappa_t C^d), \qquad \mathcal{N}(\widetilde{m}_i^d, \widetilde{\Sigma}^d_i + \kappa_t C^d).
\]
Thus $\mathcal E_{\mathrm{init}}^d$ measures the discrepancy between two Gaussian mixtures with perturbed weights, means, and covariances. We control it through the following bound.
\begin{proposition}\label{prop:init-kl-upper-bound}
We have the upper bound
\begin{equation}\label{eq:init-kl-upper-bound}
\mathcal E^{d}_{\mathrm{init}}
\;\le\;
\mathrm{KL}(w\,\|\,\widetilde w)
+\sum_{i\in I} w_i\,\mathrm{KL}\!\big(\varphi_{i,0}^d\,\|\,\widetilde\varphi_{i,0}^d\big),
\end{equation}
where
\begin{equation}\label{eq:kl-weights}
\mathrm{KL}(w\,\|\,\widetilde w)
=
\sum_{i\in I} w_i \log\!\left(\frac{w_i}{\widetilde w_i}\right).
\end{equation}
Moreover, combining Assumption~\ref{assump:misspecified-mixture-score} with~\eqref{eq:diag-assumption}  yields the explicit expression
\begin{equation}\label{eq:kl-gaussians-diagonal}
\begin{aligned}
& \mathrm{KL}\!\big(\varphi_{i,0}^d\,\|\,\widetilde\varphi_{i,0}^d\big)
%&
=\frac12\sum_{j=1}^d\Bigg[
\frac{(\Delta m_{ij})^2
%-\Delta\sigma_{ij}
}{\sigma_{ij}+\Delta\sigma_{ij}+\lambda_j}
\\&
%\hspace{2.7em}
\hspace{1.7em}
+\log\!\Bigg(1+\frac{\Delta\sigma_{ij}}{\sigma_{ij}+\lambda_j}\Bigg)
- \frac{\Delta\sigma_{ij}}{\sigma_{ij}+\Delta\sigma_{ij}+\lambda_j}
\Bigg].
\end{aligned}
\end{equation}
\end{proposition}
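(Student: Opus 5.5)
The plan has two independent parts: the mixture inequality \eqref{eq:init-kl-upper-bound}, and the closed form \eqref{eq:kl-gaussians-diagonal}.

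\textbf{Step 1: the mixture inequality.} I would use joint convexity of KL, or equivalently the chain rule on an augmented space. Introduce the joint laws $P(i,x)=w_i\varphi_{i,0}^d(x)$ and $\widetilde P(i,x)=\widetilde w_i\widetilde\varphi_{i,0}^d(x)$ on $I\times\mathbb R^d$. Their $x$-marginals are $\rho_0^d$ and $\widetilde\rho_0^d$. The data-processing inequality for the projection $(i,x)\mapsto x$ then gives $\mathrm{KL}(\rho_0^d\|\widetilde\rho_0^d)\le \mathrm{KL}(P\|\widetilde P)$. The chain rule splits $\mathrm{KL}(P\|\widetilde P)$ into the KL between the $i$-marginals, which is \eqref{eq:kl-weights}, plus the average conditional KL $\sum_i w_i\mathrm{KL}(\varphi_{i,0}^d\|\widetilde\varphi_{i,0}^d)$.

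When $I$ is countably infinite, a little care is needed. All summands $w_i\log(w_i/\widetilde w_i)$ can be handled by the nonnegativity of KL on the discrete space, where the total is a well-defined value in $[0,\infty]$. If the right-hand side is infinite there is nothing to prove. Alternatively, one can write the log-sum inequality pointwise in $x$, namely $\rho_0^d\log(\rho_0^d/\widetilde\rho_0^d)\le\sum_i w_i\varphi_{i,0}^d\log\big(w_i\varphi_{i,0}^d/(\widetilde w_i\widetilde\varphi_{i,0}^d)\big)$, and integrate, using Tonelli after adding the nonnegative compensators $\widetilde w_i\widetilde\varphi_{i,0}^d-w_i\varphi_{i,0}^d$, whose total integral is zero.

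\textbf{Step 2: the closed form.} At $t=0$ we have $\kappa_0=1$, so $\varphi_{i,0}^d=\mathcal N(m_i^d,\Sigma_i^d+C^d)$ and $\widetilde\varphi_{i,0}^d=\mathcal N(\widetilde m_i^d,\widetilde\Sigma_i^d+C^d)$. By \eqref{eq:diag-assumption}, both covariances are diagonal in the same basis, with entries $a_j:=\sigma_{ij}+\lambda_j$ and $b_j:=\sigma_{ij}+\Delta\sigma_{ij}+\lambda_j$. Both Gaussians are therefore product measures, and KL tensorizes into a sum over $j$ of one-dimensional Gaussian KLs. For each coordinate I would apply the standard formula
\[
\mathrm{KL}(\mathcal N(\mu_1,a)\|\mathcal N(\mu_2,b))=\tfrac12\Big[\tfrac{a}{b}-1+\tfrac{(\mu_1-\mu_2)^2}{b}+\log\tfrac{b}{a}\Big].
\]
Then rewrite each term using $\mu_1-\mu_2=-\Delta m_{ij}$, $b/a=1+\Delta\sigma_{ij}/(\sigma_{ij}+\lambda_j)$, and $a/b-1=-\Delta\sigma_{ij}/b$. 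This yields exactly the three terms in \eqref{eq:kl-gaussians-diagonal}.

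\textbf{Main obstacle.} Nothing here is deep. The only delicate point is the rigorous handling of the countable mixture in Step 1, which means justifying the interchange of sum and integral; the augmented-space data-processing argument sidesteps it cleanly. The algebra in Step 2 is routine bookkeeping of the sign of $\Delta\sigma_{ij}$ in the ratio $a/b$.
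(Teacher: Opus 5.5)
Your proposal is correct and follows the paper's proof essentially step for step. The paper also lifts to the joint densities $w_i\varphi_{i,0}^d(x)$ and $\widetilde w_i\widetilde\varphi_{i,0}^d(x)$, applies data processing and the chain rule, and then evaluates the diagonal Gaussian KL coordinatewise with the same one-dimensional formula; your remarks on countable $I$ are an extra level of care the paper does not spell out for this proposition.
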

The bound~\eqref{eq:init-kl-upper-bound} separates the effect of weight
perturbations from the within-component Gaussian mismatch:
\begin{itemize} 
\item The weight term~\eqref{eq:kl-weights} can be controlled, for example, by assuming $\widetilde w_i = w_i(1+ \eta_i)$, with $\eta_i  \in (-\eta,\eta)$ for some $\eta \in (0,1)$. 
\item Controlling \eqref{eq:kl-gaussians-diagonal} uniformly in $d$ instead amounts to requiring that the coordinatewise contributions be summable as
$d\to\infty$. Using $\log(1+r)\le r$ for $r>-1$, it is enough to impose
\begin{equation*}
\begin{aligned}
&\sup_{i\in I}\sum_{j\ge1}\frac{(\Delta m_{ij})^2}{\sigma_{ij}+\Delta\sigma_{ij}+\lambda_j}
<\infty,
\\
&\sup_{i\in I}\sum_{j\ge1}\frac{(\Delta\sigma_{ij})^2}{(\sigma_{ij}+\lambda_j)(\sigma_{ij}+\Delta\sigma_{ij}+\lambda_j)}
<\infty.
\end{aligned}
\end{equation*}
\end{itemize}

\subsection{Score Mismatch $\mathcal{E}^d_{\mathrm{score},T}$}\label{sec:score-error}
We now turn to the score contribution, given by the $\Gamma^d$-weighted energy
\begin{equation*}
\mathcal{E}^{d}_{\mathrm{score},T}
:= \frac12
\int_{0}^{T}\!\!\mathbb{E}_{\rho_t^d} 
\big\|(\Gamma^d)^{1/2}\epsilon_t^d(X)\big\|^2 \, dt.
\end{equation*}
In the Gaussian-mixture setting, the score mismatch admits the decomposition

\begin{equation*}
\begin{aligned}
\epsilon_t^d(x)
=
&\sum_{i\in I} p_{i,t}^d(x)
\big(S_{i,t}^d(x)-\widetilde S_{i,t}^d(x)\big)
\\
&+
\sum_{i\in I}
\big(p_{i,t}^d(x)-\widetilde p_{i,t}^d(x)\big)
\,\widetilde S_{i,t}^d(x),
\end{aligned}
\end{equation*}
where
\[ 
p_{i,t}^d(x)=\frac{w_i\varphi_{i,t}^d(x)}{\rho_t^d(x)},\qquad \widetilde p_{i,t}^d(x)=\frac{\widetilde w_i\widetilde\varphi_{i,t}^d(x)}{\widetilde\rho_t^d(x)}
\]
are the exact and perturbed responsibilities, and
\[
\begin{aligned}
&S_{i,t}^d(x)&:=&-\Big(\Sigma_i^d+\kappa_t C^d\Big)^{-1}(x-m_i^d),\\
&\widetilde S_{i,t}^d(x)&:=&-\Big(\widetilde\Sigma_i^d+\kappa_t C^d\Big)^{-1}(x-\widetilde m_i^d).
\end{aligned}
\]
This decomposition separates component-score errors from errors in the mixture responsibilities. The
following proposition bounds both contributions.
\begin{proposition}\label{prop:score-error-upper-bound}
For each $t\in[0,T]$, we have
\begin{equation}\label{eq:first-term-bound-main}
\mathbb E_{\rho^d_t}\Big\|
\sum_{i\in I} p_{i,t}^d(X)\,(\Gamma^d)^{1/2}\big( S^d_{i,t}-\widetilde S^d_{i,t}\big)(X)
\Big\|^2
\;\le\;\mathsf B_{\mathrm{comp}}^d(t),
\end{equation}
with 
\[
\begin{aligned}
\mathsf{B}_{\mathrm{comp}}^d(t)
:=
2\sum_{i\in I} w_i \sum_{j=1}^d \gamma_j
\frac{(\Delta m_{ij})^2+\frac{(\Delta\sigma_{ij})^2}{\sigma_{ij}+\kappa_t\lambda_j}}
{\big(\sigma_{ij}+\Delta\sigma_{ij}+\kappa_t\lambda_j\big)^2}.
\end{aligned}
\]
Moreover, with $\mathsf{B}_{\mathrm{resp}}^d(t)$ defined in
\eqref{eq:Bresp-definition}, we have
\begin{equation}\label{eq:second-term-bound-main}
\mathbb E_{\rho^d_t}\Big\|
\sum_{i\in I}\big( p_{i,t}^d-\widetilde p_{i,t}^d\big)(X)\,(\Gamma^d)^{1/2}\widetilde S^d_{i,t}(X)
\Big\|^2
\;\le\;\mathsf B_{\mathrm{resp}}^d(t).
\end{equation}
Consequently,
\begin{equation*}
\mathcal{E}^{d}_{\mathrm{score},T}
\;\le\;
\int_0^T \Big(\mathsf{B}_{\mathrm{comp}}^d(t)+\mathsf{B}_{\mathrm{resp}}^d(t)\Big)\,dt.
\end{equation*}
\end{proposition}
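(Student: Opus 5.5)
Three pieces are needed: the component term (first display), the responsibility term (second display), and the final integral bound. The last follows from the first two by writing $\epsilon_t^d$ as the sum of the two terms in the displayed decomposition, using $\|a+b\|^2\le 2\|a\|^2+2\|b\|^2$, and integrating in $t$. The factor $\tfrac12$ in $\mathcal E^d_{\mathrm{score},T}$ cancels the $2$. So the work is in \eqref{eq:first-term-bound-main}, while \eqref{eq:second-term-bound-main} holds essentially by how $\mathsf B_{\mathrm{resp}}^d$ is defined in \eqref{eq:Bresp-definition}.

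\emph{Component term.} The responsibilities satisfy $p_{i,t}^d\ge0$ and $\sum_i p_{i,t}^d=1$. Jensen's inequality (convexity of $\|\cdot\|^2$) therefore gives
\[
\Big\|\sum_i p_{i,t}^d (\Gamma^d)^{1/2}(S_{i,t}^d-\widetilde S_{i,t}^d)\Big\|^2\le \sum_i p_{i,t}^d\big\|(\Gamma^d)^{1/2}(S_{i,t}^d-\widetilde S_{i,t}^d)\big\|^2 .
\]
Since $\rho_t^d\,p_{i,t}^d=w_i\varphi_{i,t}^d$, taking the expectation under $\rho_t^d$ gives $\sum_i w_i\,\mathbb E_{\varphi_{i,t}^d}\|\cdot\|^2$. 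Each component expectation is then computed coordinatewise, because all matrices are diagonal. Write $a_j=\sigma_{ij}+\kappa_t\lambda_j$ and $\tilde a_j=a_j+\Delta\sigma_{ij}$, and let $y_j=x_j-m_{ij}\sim\mathcal N(0,a_j)$. The $j$-th coordinate of $S-\widetilde S$ is
\[
-\frac{y_j}{a_j}+\frac{y_j-\Delta m_{ij}}{\tilde a_j}
= y_j\frac{a_j-\tilde a_j}{a_j\tilde a_j}-\frac{\Delta m_{ij}}{\tilde a_j}
=-\frac{\Delta\sigma_{ij}\,y_j}{a_j\tilde a_j}-\frac{\Delta m_{ij}}{\tilde a_j}.
\]
Its second moment is $\tilde a_j^{-2}\big[(\Delta m_{ij})^2+(\Delta\sigma_{ij})^2/a_j\big]$ exactly, since the cross term vanishes because $y_j$ is centred. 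Multiplying by $\gamma_j$ and summing gives $\mathsf B_{\mathrm{comp}}^d(t)/2$, so the stated constant $2$ leaves slack. I would keep the factor $2$, since it would also cover a variant in which one first splits off the mean shift with $\|a+b\|^2\le2\|a\|^2+2\|b\|^2$.

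\emph{Responsibility term.} Here the plan is to match the definition \eqref{eq:Bresp-definition}. Most likely it is obtained by Cauchy--Schwarz in the mixture index:
\[
\Big\|\sum_i (p_{i,t}^d-\widetilde p_{i,t}^d)v_i\Big\|^2\le \Big(\sum_i |p_{i,t}^d-\widetilde p_{i,t}^d|\Big)\sum_i |p_{i,t}^d-\widetilde p_{i,t}^d|\,\|v_i\|^2 ,
\]
with $v_i=(\Gamma^d)^{1/2}\widetilde S_{i,t}^d$. One then uses $\sum_i|p_{i,t}^d-\widetilde p_{i,t}^d|\le 2$ and $|p_{i,t}^d-\widetilde p_{i,t}^d|\le p_{i,t}^d+\widetilde p_{i,t}^d$. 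The $p_{i,t}^d$-weighted part integrates, as above, to Gaussian moments of $\widetilde S_{i,t}^d$ under $\varphi_{i,t}^d$, namely $\sum_j\gamma_j\big[a_j+(\Delta m_{ij})^2\big]/\tilde a_j^2$. The $\widetilde p_{i,t}^d$-weighted part requires a change of measure from $\rho_t^d$ to $\widetilde\rho_t^d$, via a density-ratio bound or Cauchy--Schwarz against $\chi^2$-type quantities.

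The main obstacle is making this responsibility bound summable in $j$. Near $t=T$ one has $a_j\to\sigma_{ij}$, so a naive bound carrying $\gamma_j/\tilde a_j$ summed over $j$ diverges unless $\gamma_j$ decays faster than $\sigma_{ij}$; this is where fast preconditioner decay enters. I expect \eqref{eq:Bresp-definition} to be exactly whatever this Cauchy--Schwarz/change-of-measure computation produces, so proving \eqref{eq:second-term-bound-main} reduces to carrying it out carefully, the Gaussian moments again being explicit by diagonality.
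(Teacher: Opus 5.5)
Your treatment of the component term and of the final integration step is correct and follows the paper: Jensen over the responsibilities, $\rho_t^d p_{i,t}^d=w_i\varphi_{i,t}^d$, and diagonal Gaussian moments. Because you use that $y_j$ is centred, instead of splitting with $(a+b)^2\le 2a^2+2b^2$ as the paper does, you even get $\mathsf B_{\mathrm{comp}}^d(t)/2$.

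\textbf{The gap is the responsibility term.} Inequality \eqref{eq:second-term-bound-main} does not hold by definition. With $\beta_i=\widetilde w_i/w_i$, $\beta_*=\inf_i\beta_i$ and $r_i=\widetilde\varphi_{i,t}^d/\varphi_{i,t}^d$, the paper sets $\mathsf B_{\mathrm{resp}}^d(t)=\beta_*^{-2}\mathcal D(t,d)^{1/4}\mathcal R(t,d)^{1/4}\mathcal S_{\mathrm{pair}}(t,d)^{1/2}$. Here $\mathcal D=\sum_i w_i\,\mathbb E_{\varphi_{i,t}^d}[(\beta_i r_i-1)^8]$, $\mathcal R=\sum_i w_i\,\mathbb E_{\varphi_{i,t}^d}[r_i^{-8}]$, and $\mathcal S_{\mathrm{pair}}$ is built from fourth moments of the pairwise differences $\widetilde S_{i,t}^d-\widetilde S_{\ell,t}^d$. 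The inequality against this specific quantity has to be proved, and your Cauchy--Schwarz route cannot deliver it. After $\sum_i|p_{i,t}^d-\widetilde p_{i,t}^d|\le 2$ and $|p_{i,t}^d-\widetilde p_{i,t}^d|\le p_{i,t}^d+\widetilde p_{i,t}^d$, you are left with $2\,\mathbb E_{\rho_t^d}\sum_i(p_{i,t}^d+\widetilde p_{i,t}^d)\|(\Gamma^d)^{1/2}\widetilde S_{i,t}^d\|^2$. This quantity has two defects. First, it does not vanish as the perturbation vanishes, whereas $\mathsf B_{\mathrm{resp}}^d(t)=0$ then because $\mathcal D=0$. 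Second, it involves the raw scores, so it contains terms of order $\sum_j\gamma_j/(\sigma_{ij}+\kappa_t\lambda_j)$, which is precisely the divergence you anticipate. Bounding the left side by something that can strictly exceed $\mathsf B_{\mathrm{resp}}^d(t)$ does not prove \eqref{eq:second-term-bound-main}. The change of measure from $\rho_t^d$ to $\widetilde\rho_t^d$ is also left unspecified.

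\textbf{The missing idea is to stay under $\rho_t^d$ and to centre.} Write $\widetilde p_i=p_i z_i$ with $z_i=\beta_i r_i/\overline z$ and $\overline z=\sum_k p_k\beta_k r_k$. Since $\sum_i p_i(z_i-1)=0$, you may subtract the barycentre $\overline S=\sum_i p_i\widetilde S_i$ and get $I_2=-\sum_i p_i(z_i-1)(\widetilde S_i-\overline S)$. Weighted Cauchy--Schwarz then gives $\|(\Gamma^d)^{1/2}I_2\|^2\le A_{\mathrm{resp}}V_{\mathrm{score}}$, with $A_{\mathrm{resp}}=\sum_i p_i(z_i-1)^2$ and $V_{\mathrm{score}}=\tfrac12\sum_{i,\ell}p_ip_\ell\|(\Gamma^d)^{1/2}(\widetilde S_i-\widetilde S_\ell)\|^2$.

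The density-ratio step you were looking for has two parts. First, $A_{\mathrm{resp}}\le\overline z^{-2}\sum_i p_i(\beta_i r_i-1)^2$, since the weighted variance about the mean is at most that about $1$. Second, $\overline z^{-2}\le\beta_*^{-2}\sum_k p_k r_k^{-2}$ by convexity of $u\mapsto u^{-2}$. Jensen, Cauchy--Schwarz and $\rho_t^d p_i=w_i\varphi_{i,t}^d$ then give $\mathbb E_{\rho_t^d}[A_{\mathrm{resp}}^2]\le\beta_*^{-4}\mathcal D^{1/2}\mathcal R^{1/2}$.

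For the other factor, use the barycentre's minimizing property with $c=\widetilde S_\ell$ and average over $\ell$ with weights $w_\ell$. This gives $V_{\mathrm{score}}\le\sum_\ell w_\ell\sum_i p_i\|(\Gamma^d)^{1/2}(\widetilde S_i-\widetilde S_\ell)\|^2$, whose second moment is at most $\mathcal S_{\mathrm{pair}}$ by Jensen, Minkowski and explicit Gaussian fourth moments. A final Cauchy--Schwarz in $L^2(\rho_t^d)$ yields exactly $\beta_*^{-2}\mathcal D^{1/4}\mathcal R^{1/4}\mathcal S_{\mathrm{pair}}^{1/2}$.

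The centring is what makes this bound perturbative. It also replaces raw scores by pairwise differences, which vanish on coordinates where the perturbed components share mean and variance. That is how the paper avoids needing $\sum_j\gamma_j/\sigma_{ij}<\infty$ for this term.
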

The proof is given in Appendix~\ref{app:sec:score-error}: the component term is derived in \eqref{eq:I1-reduce-app}, while the responsibility term is bounded in Proposition~\ref{prop:responsibility-main}. For the former, uniform-in-$d$ sufficient conditions are read directly from the explicit expression of $\mathsf B_{\mathrm{comp}}^d(t)$; for the latter, the argument is more involved, and explicit sufficient conditions are collected in Proposition~\ref{prop:resp-concrete}.
\begin{itemize}
\item
The term $\mathsf{B}_{\mathrm{comp}}^d(t)$ depends on $(\Delta m_{ij},\Delta\sigma_{ij})$ only through
$\gamma_j$-weighted coordinatewise contributions. A sufficient condition for
dimension-uniform control is
\[
\begin{aligned}
& \hspace*{-0.18in}  \sup_{i\in I}\sum_{j\ge1}\gamma_j\,
\frac{(\Delta m_{ij})^2 + \frac{(\Delta\sigma_{ij})^2}{\sigma_{ij}}}{(\sigma_{ij}+\Delta\sigma_{ij})^2}
<\infty.
\end{aligned}
\]
\item
The term $\mathsf B_{\mathrm{resp}}^d(t)$ is more delicate because it couples
component perturbations through the Gaussian-mixture responsibilities. The bound
has two parts. The first controls how much the responsibilities change, through
$\mathcal D(t,d)$ and $\mathcal R(t,d)$ in
\eqref{eq:D-def-resp}--\eqref{eq:R-def-resp}; these quantities are controlled
by Proposition~\ref{prop:resp-density-log}. Apart from the low-mode and
positivity assumptions stated there, it is enough to assume that, for some $J$
and $\rho\in(0,1/9)$,
\[
\frac{|\Delta\sigma_{ij}|}{\sigma_{ij}}\le\rho,
\qquad
\frac{|\Delta m_{ij}|}{\sqrt{\sigma_{ij}}}\le\rho,
\]
for all $i\in I$ and all $j\ge J$, together with
\[
\sup_{i\in I}
\sum_{j\ge J}
\left(
\frac{(\Delta\sigma_{ij})^2}{\sigma_{ij}^2}
+
\frac{(\Delta m_{ij})^2}{\sigma_{ij}}
\right)
<\infty.
\]

The second part controls the pairwise variation of the perturbed component
scores,
\[
\widetilde S_{i,t}^d-\widetilde S_{\ell,t}^d,
\]
weighted coordinate by coordinate by $(\gamma_j)$. This is the role of
$\mathcal S_{\mathrm{pair}}(t,d)$ in \eqref{eq:Spair-def-resp}, controlled by
Proposition~\ref{prop:resp-score}. A simple set of sufficient conditions is
\[
\begin{aligned}
\sup_{i,\ell\in I}\sum_{j\ge1}\gamma_j\,
&\frac{
\big((\sigma_{\ell j}-\sigma_{ij})
+(\Delta\sigma_{\ell j}-\Delta\sigma_{ij})\big)^2
}
{(\sigma_{ij}+\Delta\sigma_{ij})^2}
\\[-0.2em]
&\times
\frac{\sigma_{ij}+\lambda_j}
{(\sigma_{\ell j}+\Delta\sigma_{\ell j})^2}
<\infty,
\end{aligned}
\]
together with
\[
\begin{aligned}
\sup_{i,\ell\in I}
\sum_{j\ge1}\gamma_j\,\Bigg(
&\frac{(\Delta m_{ij})^2}
{(\sigma_{ij}+\Delta\sigma_{ij})^2}
\\[-0.2em]
&+
\frac{(m_{\ell j}-m_{ij}+\Delta m_{\ell j})^2}
{(\sigma_{\ell j}+\Delta\sigma_{\ell j})^2}
\Bigg)
<\infty.
\end{aligned}
\]
\end{itemize}

These conditions highlight the importance of preconditioning ALD in the perturbative regime. They make concrete the trade-off anticipated
at the end of Section~\ref{sec:ALD}: in Proposition~\ref{prop:KL-error}, larger preconditioner coefficients reduce
the annealing-induced bias through $\mathcal E^d_{\mathrm{bias},T}$, whereas
sufficient spectral decay is needed to control the accumulation of tail-coordinate perturbations in
$\mathcal E^d_{\mathrm{score},T}$. In simple regimes, the same trade-off also
suggests concrete choices of $(\gamma_j)$. For instance, if the tail has common
covariances $\sigma_{ij}=\sigma_j$, the perturbations are covariance-only with
$\Delta m_{ij}=0$ and $\Delta\sigma_{ij}=\delta_j$, the mean separation is confined to finitely many low modes,  and $|\delta_j|\ll\sigma_j$, then the responsibility contribution is controlled, while the annealing-bias and
component-score contributions scale as
$\lambda_j^2/(\gamma_j\sigma_j)$ and $\gamma_j\delta_j^2/\sigma_j^3$,
respectively. Balancing them gives
\[ 
\gamma_j\asymp \lambda_j\sigma_j/|\delta_j|, 
\]
with both contributions of order
$\lambda_j|\delta_j|/\sigma_j^2$; hence the sufficient conditions reduce to
$\sum_{j\ge1}\lambda_j|\delta_j|/\sigma_j^2<\infty$.

\section{Numerical Experiments}\label{sec:numerics}
We now illustrate the dimension-uniform stability of ALD predicted by the theory. We
consider successive finite-dimensional truncations of an infinite-dimensional
Gaussian-mixture target and compare spectral choices of the smoothing covariance
and preconditioner that either satisfy or violate the conditions above. The
infinite-dimensional target and spectral sequences are fixed, while the ALD
runs are performed on their finite-dimensional truncations. Specifically, we estimate
the empirical $\mathrm{KL}$ divergence between the target $\rho_\star^d$ and the ALD output law $\rho_T^{\mathrm{ALD},d}$ as the dimension increases. The experiments have two purposes: to highlight the impact of
spectral design choices intrinsic to the infinite-dimensional setting, and to
illustrate the role of the preconditioner in ensuring stability under score
mismatch. Implementation details are provided in
Appendix~\ref{app:numerics-details}.

\subsection{Annealing-Induced Bias vs. Dimension}

We consider a two-component infinite-dimensional Gaussian-mixture target with weights $(0.75,0.25)$, separated means $m_2-m_1=10\,e_1$, and covariances $\Sigma_1=1.2\cdot \Sigma$ and $\Sigma_2= 2 \cdot \Sigma$, where $\Sigma = \mathrm{Diag}(j^{-1.25})_{j\geq 1}$. We study its truncations $\rho_\star^d$ for $d\in\{1,5,\dots,65\}$. We work in the idealized setting of Section~\ref{sec:ALD}, using the exact score along the annealing path and initializing from the smoothed law $\rho_0^d=\rho_\star^d*\mathcal{N}(0,C^d)$. We fix a common time horizon $T$, run ALD for each truncation at dimension $d$, and report the empirical  estimate of the annealing-induced bias, $\mathrm{KL}\!\big(\rho_\star^d\,\|\,\rho_T^{\mathrm{ALD},d}\big)$, the main source of error in this exact-score, exact-initialization
experiment. 
We compare two regimes. In the first, we follow~\eqref{eq:suff-condition} and
choose the infinite-dimensional spectra
$\gamma_j=j^{-1.5}$ and $\lambda_j=40\cdot j^{-2.7}$, $j\geq1$; these give the truncated preconditioner
 $\Gamma^d=\mathrm{Diag}(j^{-1.5})_{j=1}^d$ and smoothing covariance $C^d=\mathrm{Diag}(40\cdot j^{-2.7})_{j=1}^d$. The factor $40$ sets the smoothing scale. In the second, we keep the same target $\rho_\star^d$ but use the flat-spectrum choice, namely $\Gamma^d=I$ and $C^d=40I$. Figure~\ref{fig:bias-error-vs-dimension} contrasts the two behaviors on a logarithmic scale: under the prescribed spectral design, the $\mathrm{KL}(\rho_\star^d\|\rho^{\mathrm{ALD},d}_T)$ remains uniformly small as $d$ increases, whereas under the flat-spectrum choice it grows with $d$.  This illustrates that~\eqref{eq:suff-condition} reflects a genuine high-dimensional stability constraint rather than a proof artifact.

\begin{figure}[!t]
    \centering
    \includegraphics[width=1.0\linewidth]{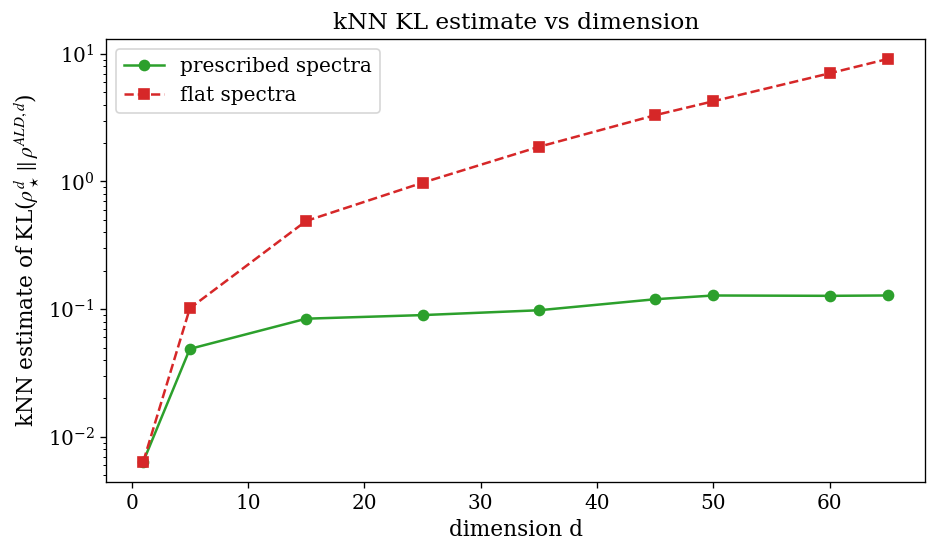}
    \caption{Annealing-induced bias across dimensions. Empirical $\mathrm{KL}(\rho_\star^d\|\rho^{\mathrm{ALD},d}_T)$ as a function of the truncation dimension $d$, shown on a logarithmic $y$-axis. The green curve corresponds to the prescribed spectral design $\Gamma^d=\mathrm{Diag}(j^{-1.5})_{j=1}^d$ and $C^d=\mathrm{Diag}(40 \cdot j^{-2.7})_{j=1}^d$, while the red curve corresponds to the flat-spectrum choice $\Gamma^d=I$ and $C^d=40I$. The $\mathrm{KL}$ is estimated via $k$NN with $k=20$; robustness to $k$ is reported in Appendix~\ref{app:numerics-details}.}
    \label{fig:bias-error-vs-dimension} 
\end{figure}

\begin{figure}[!t]
    \centering
    \includegraphics[width=1\linewidth]{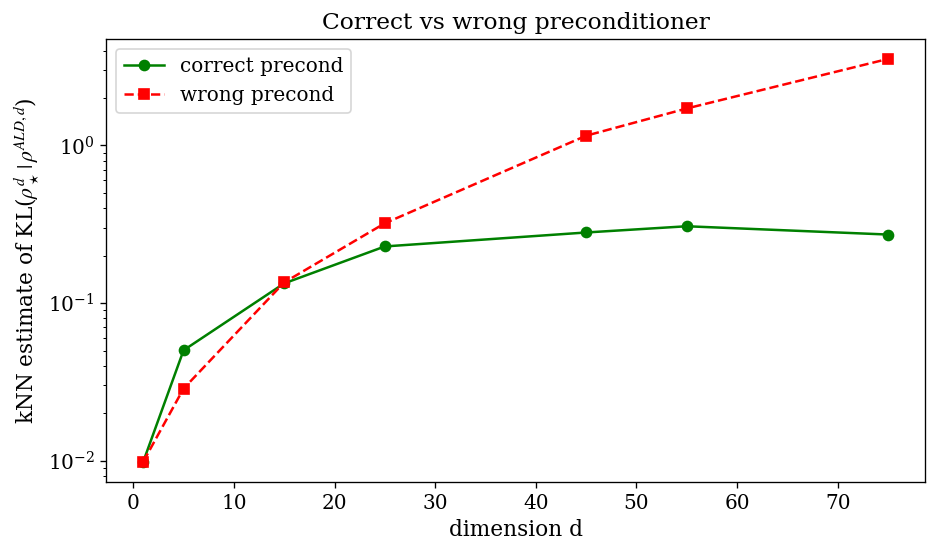}
    \caption{Preconditioning under score perturbations.
Empirical $\mathrm{KL}(\rho_\star^d\|\rho^{\mathrm{ALD},d}_T)$ as a function of the truncation dimension $d$, shown on a logarithmic $y$-axis. The ALD drift is
computed using a covariance-perturbed score with
$\Delta\sigma_{ij}=0.05\,j^{-3}$. The green curve corresponds to ALD runs with preconditioner $\Gamma^d=\mathrm{Diag}(j^{-3.5})_{j=1}^d$, satisfying the sufficient perturbative conditions, while the red curve uses $\Gamma^d=\mathrm{Diag}(j^{-1})_{j=1}^d$, which violates the component-score summability condition. The $\mathrm{KL}$ is estimated via $k$NN with $k=20$; robustness to $k$ is reported in Appendix~\ref{app:numerics-details}.}
    \label{fig:score-error-vs-dimension} 
\end{figure}

\subsection{The Importance of Preconditioning}

We now introduce controlled perturbations in both the initialization and the score.
To isolate the role that our theory attributes to the preconditioner in achieving
dimension-uniformity, we use the same initialization for both preconditioners:
a mixture with incorrect weights $(0.1,0.9)$ but with the same component means
and covariances as the target. Thus, the resulting initialization mismatch
is uniformly controlled in $d$; relative to the target mixture it is only a
weight mismatch, while relative to the smoothed initial law the additional
covariance mismatch is summable. The ALD drift, in contrast, is computed under a
misspecified mixture in which the component covariances are perturbed.

Specifically, in Figure~\ref{fig:score-error-vs-dimension} we consider an
infinite-dimensional bimodal Gaussian mixture target with weights $(0.75, 0.25)$,
identical covariances
$\Sigma_1 = \Sigma_2 = \mathrm{Diag}(j^{-2})_{j\geq 1}$, and separated means
$m_2-m_1 = 10e_1$. We use the smoothing covariance
$C=\mathrm{Diag}(j^{-4})_{j\geq 1}$ and study its truncations at dimensions
$d\in\{1,5,\ldots,75\}$. Following the perturbative setting of
Section~\ref{sec:score-error}, and the sufficient conditions detailed in
Appendix~\ref{app:sec:score-error}, we introduce a covariance-only score
perturbation, with $\Delta m_{ij}=0$ and 
$\Delta \sigma_{ij} = 0.05\, j^{-3}$,
so that the misspecified score uses component variances
$\widetilde\sigma_{ij}=j^{-2}+0.05\,j^{-3}$.
We then compare two preconditioners through their finite-dimensional
truncations: one satisfying the set of sufficient conditions,
\[ 
\Gamma = \mathrm{Diag}(j^{-3.5})_{j\geq 1},
\] 
and one violating the component-score condition while satisfying the remaining
conditions, 
\[ 
\Gamma = \mathrm{Diag}(j^{-1})_{j\geq 1}.
\]
The contrast is  again evident: with the admissible preconditioner, the error remains uniformly controlled as $d$ grows, whereas with the non-admissible one it increases with dimension. This supports the role predicted by the theory: under score mismatch, sufficient
decay of the preconditioner is important for dimension-robust stability.

\section{Discussion and Future Work}

We studied the robustness of continuous-time annealed Langevin dynamics under
successive finite-dimensional approximations of an infinite-dimensional
multimodal target. In a Gaussian-mixture setting, we proved dimension-uniform control of the sampling error in Kullback--Leibler divergence under explicit conditions linking the component means and the spectra of the component covariances, the annealing smoothing covariance, and the preconditioner.

The analysis highlights the role of preconditioning in achieving dimension-uniformity. In the exact-score setting, the preconditioner enters the annealing-bias bound through its interaction with the smoothing covariance. In the perturbative regime, where initialization and score errors are present, sufficient spectral decay controls the accumulation of errors along the tail. A follow-up
discretization analysis of the same ALD dynamics~\cite{baldassari2026dimension} shows that this continuous-time picture
must be complemented by stability considerations in discrete time: in particular,
Euler--Maruyama can impose additional high-frequency constraints,  absent
from the continuous-time theory, that limit the applicability of ALD to more realistic settings.

As in related infinite-dimensional analyses \cite{baldassari2023conditional, pidstrigach2024infinite, baldassari2026preconditioned}, we work under structural assumptions, most notably that the mixture covariances,
smoothing covariance, and preconditioner are co-diagonalizable. This makes the dimension-uniform conditions explicit in terms of coordinatewise summability bounds. Nevertheless, the model should not be viewed as simplistic: at each fixed truncation level, mixtures with diagonal covariances can still represent a broad class of multimodal distributions, especially when the number of components is allowed to vary, or even be countable.

Several questions remain open, including posterior sampling in Bayesian nonlinear inverse problems, where classical Langevin is known to struggle \cite{bohr2021log,nickl2023bayesian}. Developing an ALD analogue of the theory in \cite{baldassari2026preconditioned} in this setting is a natural next step, and one we are actively pursuing.

\newpage 

\section*{Acknowledgments}

JG was supported by Agence de l'Innovation de D\'efense (AID) via the Centre Interdisciplinaire d'\'Etudes pour la D\'efense et la S\'ecurit\'e (CIEDS) (project 2021--PRODIPO). KS was supported by the Air Force Office of Scientific Research under grant FA9550-22-1-0176 and the National Science Foundation under grant DMS-2308389. MVdH acknowledges support from the Department of Energy under grant DE-SC0020345, Oxy, the corporate members of the Geo-Mathematical Imaging Group at Rice University, and the Simons Foundation under the MATH+X program.

\section*{Impact Statement}
This paper presents work whose goal is to advance the field
of machine learning. There are many potential societal consequences of our work, none of which we feel must be specifically highlighted here.

\bibliography{main_ICML}
\bibliographystyle{icml2026}

%%%%%%%%%%%%%%%%%%%%%%%%%%%%%%%%%%%%%%%%%%%%%%%%%%%%%%%%%%%%%%%%%%%%%%%%%%%%%%%
%%%%%%%%%%%%%%%%%%%%%%%%%%%%%%%%%%%%%%%%%%%%%%%%%%%%%%%%%%%%%%%%%%%%%%%%%%%%%%%
% APPENDIX
%%%%%%%%%%%%%%%%%%%%%%%%%%%%%%%%%%%%%%%%%%%%%%%%%%%%%%%%%%%%%%%%%%%%%%%%%%%%%%%
%%%%%%%%%%%%%%%%%%%%%%%%%%%%%%%%%%%%%%%%%%%%%%%%%%%%%%%%%%%%%%%%%%%%%%%%%%%%%%%
%%%%%%%%%%%%%%%%%%%%%%%%%%%%%%%%%%%%%%%%%%%%%%%%%%%%%%%%%%%%%%%%%%%%%%%%%%%%%%%
%%%%%%%%%%%%%%%%%%%%%%%%%%%%%%%%%%%%%%%%%%%%%%%%%%%%%%%%%%%%%%%%%%%%%%%%%%%%%%%
\newpage
\appendix
\onecolumn

\section{Proofs of Section \ref{sec:ALD}}\label{app:sec3}
Before proving Theorem~\ref{thm:ALD-dim-explicit}, we recall a standard lemma. For completeness, we include the proof below. 
\begin{lemma}\label{lem:girsanov}
Let
\[
\begin{cases}
dX^d_t = a_t(X^d_t)\,dt + \sqrt{2\Gamma^d}\,dW^d_t, \qquad X^d_0 \sim \mu^d,\\
dY^d_t = b_t(Y^d_t)\,dt + \sqrt{2\Gamma^d}\,dW^d_t, \qquad Y^d_0 \sim \nu^d,
\end{cases}
\]
and denote by $P_{X^d}$ and $P_{Y^d}$ the path laws of $(X_t^d)_{t\in[0,T]}$ and $(Y_t^d)_{t\in[0,T]}$.
Then
\[
\operatorname{KL}(P_{X^d}\,||\,P_{Y^d})
= \operatorname{KL}(\mu^d\,||\,\nu^d)
+ \frac{1}{4}\,\mathbb{E}_{X^d\sim P_{X^d}} \int_0^T
\|a_t(X^d_t) - b_t(X^d_t)\|^2_{(\Gamma^d)^{-1}}\,dt,
\]
where $\|u\|_{(\Gamma^d)^{-1}}^2 := \langle u,(\Gamma^d)^{-1}u\rangle$.
\end{lemma}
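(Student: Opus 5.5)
The plan is to decompose the path-space KL into the KL of the initial laws plus the KL between the conditional path laws given the starting point, and to compute the latter with Girsanov's theorem. Let $P^x_{X^d}$ and $P^x_{Y^d}$ be the laws of the two SDEs started from the deterministic point $x$. The chain rule for relative entropy, applied to the disintegration of the path laws with respect to the time-zero marginal, gives
\[
\operatorname{KL}(P_{X^d}\,\|\,P_{Y^d})=\operatorname{KL}(\mu^d\,\|\,\nu^d)+\int \operatorname{KL}(P^x_{X^d}\,\|\,P^x_{Y^d})\,d\mu^d(x).
\]
This reduces the problem to proving the identity for a common deterministic initial condition.

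For fixed $x$, both processes have the same nondegenerate diffusion coefficient $\sqrt{2\Gamma^d}$, since $\Gamma^d$ is positive definite and diagonal. We therefore set
\[
\theta_t := (2\Gamma^d)^{-1/2}\big(a_t(X_t^d)-b_t(X_t^d)\big),
\]
so that the drift difference lies in the range of the noise. Girsanov's theorem then gives the Radon--Nikodym derivative on path space, evaluated along $X^d$:
\[
\log\frac{dP^x_{X^d}}{dP^x_{Y^d}}(X^d)=\int_0^T\langle\theta_t,dW^d_t\rangle+\frac12\int_0^T\|\theta_t\|^2\,dt.
\]
Here $W^d$ is the Brownian motion driving $X^d$. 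This expression follows from writing $dX_t=b_t\,dt+\sqrt{2\Gamma^d}\,(dW_t+\theta_t\,dt)$ and using the standard exponential-martingale form.

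Taking expectation under $P^x_{X^d}$ removes the stochastic integral, because it is a true martingale with zero mean. This leaves
\[
\frac12\,\mathbb{E}\int_0^T\|\theta_t\|^2\,dt=\frac14\,\mathbb{E}\int_0^T\|a_t-b_t\|^2_{(\Gamma^d)^{-1}}\,dt.
\]
Integrating in $x$ against $\mu^d$ yields the claimed formula.

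The main obstacle is justifying Girsanov's theorem: we need Novikov's condition or a finite-energy condition so that the exponential is a true martingale and the stochastic integral has zero mean. I would handle this by localization. Stop the processes at $\tau_n=\inf\{t:\int_0^t\|\theta_s\|^2ds\ge n\}$, apply the identity up to $\tau_n$, and let $n\to\infty$. Lower semicontinuity of KL gives "$\le$", and monotone convergence on the energy term gives the reverse inequality. When the energy is infinite, both sides are infinite. The argument also assumes the SDEs are well-posed, which holds for the smooth Gaussian-mixture scores used later. Only the upper bound is needed in later applications, so the localized inequality already suffices.
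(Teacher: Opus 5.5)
Your proposal is correct and is essentially the paper's proof: the paper writes the Girsanov density with the factor $\mu^d(X_0^d)/\nu^d(X_0^d)$ (playing the role of your chain-rule term), substitutes the SDE for $X^d$, drops the mean-zero stochastic integral, and simplifies via $\langle u-v,u\rangle-\tfrac12(\|u\|^2-\|v\|^2)=\tfrac12\|u-v\|^2$; writing the density in terms of the noise driving $X^d$, as you do, makes that last identity unnecessary. Your localization adds rigor the paper omits (it only asserts that the stochastic integral has mean zero). Its lower-semicontinuity step needs the stopped laws of $Y^d$ to converge to $P_{Y^d}$, which holds in the paper's applications: the drifts there are continuous in $(t,x)$, so $\int_0^T\|\theta_t\|^2\,dt<\infty$ along every continuous path and $\tau_n>T$ eventually.
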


\begin{proof}
In dimension $d$, Girsanov formula gives
\begin{align*}
\frac{dP_{X^d}}{dP_{Y^d}}
= \frac{\mu^d(X_0^d)}{\nu^d(X_0^d)}
\exp\Bigg\{ &
\int_0^T \Big\langle (2\Gamma^d)^{-1}\big(a_t(X_t^d)-b_t(X_t^d)\big),\, dX_t^d\Big\rangle \\
&\quad - \frac{1}{2}\int_0^T
\Big(\|(2\Gamma^d)^{-1/2} a_t(X_t^d)\|^2 - \|(2\Gamma^d)^{-1/2} b_t(X_t^d)\|^2\Big)\,dt
\Bigg\}.
\end{align*}
Taking the logarithm yields
\begin{align*}
\log\frac{dP_{X^d}}{dP_{Y^d}}
= &\;\log\frac{\mu^d(X_0^d)}{\nu^d(X_0^d)}
+ \int_0^T \Big\langle (2\Gamma^d)^{-1}\big(a_t(X_t^d)-b_t(X_t^d)\big),\, dX_t^d\Big\rangle \\
&\; - \frac{1}{2}\int_0^T
\Big(\|(2\Gamma^d)^{-1/2} a_t(X_t^d)\|^2 - \|(2\Gamma^d)^{-1/2} b_t(X_t^d)\|^2\Big)\,dt.
\end{align*}
Using the SDE for $dX_t^d$, we have
\begin{align*}
\Big\langle (2\Gamma^d)^{-1}\big(a_t-b_t\big)(X_t^d),\, dX_t^d\Big\rangle
&= \Big\langle (2\Gamma^d)^{-1/2}\big(a_t-b_t\big)(X_t^d),\, (2\Gamma^d)^{-1/2}a_t(X_t^d)\Big\rangle\,dt \\
&\quad + \Big\langle (2\Gamma^d)^{-1/2}\big(a_t-b_t\big)(X_t^d),\, dW_t^d\Big\rangle.
\end{align*}
The stochastic integral has mean zero under $P_{X^d}$. Since
$\operatorname{KL}(P_{X^d}||P_{Y^d})
= \mathbb{E}_{P_{X^d}}\!\left[\log \frac{dP_{X^d}}{dP_{Y^d}}\right]$,
taking expectations gives
\begin{align*}
\operatorname{KL}(P_{X^d}||P_{Y^d})
= &\; \operatorname{KL}(\mu^d||\nu^d)
+ \mathbb{E}_{P_{X^d}} \int_0^T
\Big\langle (2\Gamma^d)^{-1/2}\big(a_t-b_t\big)(X_t^d),\, (2\Gamma^d)^{-1/2}a_t(X_t^d) \Big\rangle\,dt \\
&\; - \frac{1}{2}\,\mathbb{E}_{P_{X^d}}\int_0^T
\Big(\|(2\Gamma^d)^{-1/2} a_t(X_t^d)\|^2 - \|(2\Gamma^d)^{-1/2} b_t(X_t^d)\|^2\Big)\,dt.
\end{align*}
By the identity
\[
\langle u-v,\,u\rangle - \tfrac12\big(\|u\|^2-\|v\|^2\big)
= \tfrac12\,\|u-v\|^2,
\]
applied to $u=(2\Gamma^d)^{-1/2}a_t(X_t^d)$ and $v=(2\Gamma^d)^{-1/2}b_t(X_t^d)$, we obtain
\[
\operatorname{KL}(P_{X^d}||P_{Y^d})
= \operatorname{KL}(\mu^d||\nu^d)
+ \frac{1}{4}\,\mathbb{E}_{P_{X^d}}\int_0^T
\|(\Gamma^d)^{-1/2}(a_t-b_t)(X_t^d)\|^2\,dt.
\]
\end{proof}

We are now ready to prove Theorem~\ref{thm:ALD-dim-explicit}. The proof builds on ideas from Appendix A of \cite{guo2024provable}, adapted to our infinite-dimensional setting. 

\subsection{Proof of Theorem~\ref{thm:ALD-dim-explicit}}\label{app:thm-ALD-dim-explicit}
Let $\mathbb{Q}^d$ be the path measure on $C([0,T];\mathbb{R}^d)$ of the ALD diffusion~\eqref{eq:ALD}
initialized at $X_0^d\sim\rho_0^d$, and let $\pi_{T}(\omega)=\omega(T)$ be the evaluation map.
For any path measure $\mathbb{P}$ on $C([0,T];\mathbb{R}^d)$, the data-processing inequality yields
\[
\mathrm{KL}\!\big((\pi_{T})_\#\mathbb{P}\,\|\,(\pi_{T})_\#\mathbb{Q}^d\big)
\;\le\;
\mathrm{KL}(\mathbb{P}\,\|\,\mathbb{Q}^d).
\]
We apply this with a reference path measure $\mathbb{P}^{v,d}$ constructed as follows.
Let $\mathbb{P}^{v,d}$ denote the path law of
\begin{equation*}
dY_t^d=\Gamma^d\big(\nabla\log\rho_t^d+v_t^d\big)(Y_t^d)\,dt+\sqrt{2\Gamma^d}\,dW_t^d,
\qquad Y_0^d\sim\rho_0^d,
\end{equation*}
where the annealing path is
\[
\rho_t^d=\rho_\star^d*\mathcal N\!\Big(0,\frac{T-t}{T}\,C^d\Big),
\qquad t\in[0,T].
\]
We choose $v^d=(v_t^d)_{t\in[0,T]}$ so that $\mathrm{Law}_{\mathbb{P}^{v,d}}(Y_t^d)=\rho_t^d$ for all
$t\in[0,T]$. Since the diffusion matrix is constant ($2\Gamma^d$), the corresponding
Fokker--Planck equation reduces to
\begin{equation}\label{eq:continuity-proof}
\partial_t\rho_t^d=-\nabla\cdot(\rho_t^d\,\Gamma^d v_t^d).
\end{equation}
On the other hand, the Gaussian-smoothing path satisfies the backward heat equation
\begin{equation}\label{eq:heat-proof}
\partial_t\rho_t^d
=
-\frac{1}{2T}\,\nabla\cdot\big(C^d\nabla\rho_t^d\big),
\end{equation}
because differentiating with respect to the covariance parameter introduces a factor $1/2$, and
$\frac{d}{dt}\big(\frac{T-t}{T}\big)=-\frac{1}{T}$.
Comparing \eqref{eq:continuity-proof} and \eqref{eq:heat-proof}, an admissible choice is
\begin{equation*}
\Gamma^d v_t^d=\frac{1}{2T}\,C^d\nabla\log\rho_t^d,
\qquad\text{i.e.}\qquad
v_t^d=\frac{1}{2T}\,(\Gamma^d)^{-1}C^d\nabla\log\rho_t^d.
\end{equation*}
With this choice, $(\pi_{T})_\#\mathbb{P}^{v,d}=\rho_{T}^d=\rho_\star^d$. Since
$(\pi_{T})_\#\mathbb{Q}^d=\rho_{T}^{\mathrm{ALD},d}$ by definition, we have
\begin{equation}\label{eq:endpoint-vs-path}
\mathrm{KL}\big(\rho_\star^d\,\|\,\rho_{T}^{\mathrm{ALD},d}\big)
\;\le\;
\mathrm{KL}(\mathbb{P}^{v,d}\,\|\,\mathbb{Q}^d).
\end{equation}
It therefore remains to obtain an explicit upper bound on the path-space divergence
$\mathrm{KL}(\mathbb{P}^{v,d}\,\|\,\mathbb{Q}^d)$.

By Lemma \ref{lem:girsanov}, we have
\begin{align}
\mathrm{KL}(\mathbb{P}^{v,d}\,\|\,\mathbb{Q}^d)
&=\frac14\,\mathbb{E}_{\mathbb{P}^{v,d}}\int_0^{T}
\|\Gamma^dv_t^d(Y_t^d)\|^2_{(\Gamma^d)^{-1}}\,dt
=\frac14\int_0^{T}\int \langle v_t^d,\Gamma^d v_t^d\rangle\,d\rho_t^d\,dt \notag\\
&=\frac{1}{16T^2}\int_0^{T}\int
\big\langle \nabla\log\rho_t^d,\;A^d\nabla\log\rho_t^d\big\rangle\,d\rho_t^d\,dt,
\label{eq:pathKL-proof}
\end{align}
where $A^d:=C^d(\Gamma^d)^{-1}C^d$.

Fix $t\in[0,T]$. Since $\rho_t^d$ is a Gaussian mixture,
\[
\rho_t^d
=\sum_{i\in I} w_i\,\mathcal N\!\Big(m_i^d,\Sigma_i^d+\frac{T-t}{T}C^d\Big),
\qquad
\rho_t^d(x)=\sum_{i\in I} w_i\,\varphi_{i,t}(x),
\]
and with responsibilities $p_{i,t}^d(x):=\frac{w_i\varphi_{i,t}(x)}{\rho_t^d(x)}$ we obtain
\[
\nabla\log\rho_t^d(x)
=
-\sum_{i\in I} p_{i,t}^d(x)\Big(\Sigma_i^d+\frac{T-t}{T}C^d\Big)^{-1}(x-m_i^d).
\]
Since $z\mapsto z^\top A^d z$ is convex ($A^d\succeq0$), Jensen's inequality yields
\begin{align*}
\big\langle \nabla\log\rho_t^d(x),A^d\nabla\log\rho_t^d(x)\big\rangle
&\le
\sum_{i\in I} p_{i,t}^d(x)\,
(x-m_i^d)^\top
\Big(\Sigma_i^d+\frac{T-t}{T}C^d\Big)^{-1}
A^d
\Big(\Sigma_i^d+\frac{T-t}{T}C^d\Big)^{-1}
(x-m_i^d).
\end{align*}
Integrating against $\rho_t^d(x)\,dx$ and using $\rho_t^d p_{i,t}=w_i\varphi_{i,t}$ gives
\[
\int\!\big\langle \nabla\log\rho_t^d,A^d\nabla\log\rho_t^d\big\rangle\,d\rho_t^d
\le
\sum_{i\in I} w_i\,
\mathrm{Tr}\Big(A^d\Big(\Sigma_i^d+\frac{T-t}{T}C^d\Big)^{-1}\Big),
\]
where we used that, in the diagonal setting, all matrices commute and
$\mathrm{Cov}(\varphi_{i,t})=\Sigma_i^d+\frac{T-t}{T}C^d$.
Plugging this into \eqref{eq:pathKL-proof} yields
\[
\mathrm{KL}(\mathbb{P}^{v,d}\,\|\,\mathbb{Q}^d)
\le
\frac{1}{16 T^2}\sum_{i\in I}w_i\int_0^{T}
\mathrm{Tr}\Big(A^d\Big(\Sigma_i^d+\frac{T-t}{T}C^d\Big)^{-1}\Big)\,dt.
\]
Changing variables $s:=\frac{T-t}{T}\in[0,1]$ (so $dt=-T\,ds$) gives
\[
\int_0^{T}\mathrm{Tr}\Big(A^d\Big(\Sigma_i^d+\frac{T-t}{T}C^d\Big)^{-1}\Big)\,dt
=
T\int_0^1\mathrm{Tr}\big(A^d(\Sigma_i^d+sC^d)^{-1}\big)\,ds.
\]
Since $A^d$, $C^d$, and $\Sigma_i^d$ are diagonal with eigenvalues
$\lambda_j^2/\gamma_j$, $\lambda_j$, and $\sigma_{ij}$, respectively, we have
\[
\mathrm{Tr}\big(A^d(\Sigma_i^d+sC^d)^{-1}\big)
=\sum_{j=1}^d \frac{\lambda_j^2}{\gamma_j(\sigma_{ij}+s\lambda_j)}.
\]
Therefore,
\begin{align*}
\mathrm{KL}(\mathbb{P}^{v,d}\,\|\,\mathbb{Q}^d)
&\le
\frac{1}{16T}\sum_{i\in I}w_i\sum_{j=1}^d \frac{\lambda_j^2}{\gamma_j}
\int_0^1\frac{ds}{\sigma_{ij}+s\lambda_j}
=
\frac{1}{16T}\sum_{i\in I}w_i\sum_{j=1}^d \frac{\lambda_j}{\gamma_j}
\log\!\Big(1+\frac{\lambda_j}{\sigma_{ij}}\Big).
\end{align*}
Combining this estimate with \eqref{eq:endpoint-vs-path} gives 
\[
\mathrm{KL}\big(\rho_\star^d\,\|\,\rho_{T}^{\mathrm{ALD},d}\big)
\le
\frac{1}{16T}\sum_{i\in I}w_i\sum_{j=1}^d \frac{\lambda_j}{\gamma_j}
\log\!\Big(1+\frac{\lambda_j}{\sigma_{ij}}\Big).
\]
Now fix $\epsilon>0$. Choosing
\[
T
=\frac{1}{16\epsilon}\sum_{i\in I} w_i\sum_{j=1}^d \frac{\lambda_j}{\gamma_j}
\log\!\Big(1+\frac{\lambda_j}{\sigma_{ij}}\Big)
\]
yields
\[
\mathrm{KL}\big(\rho_\star^d\,\|\,\rho_{T}^{\mathrm{ALD},d}\big)\le \epsilon.
\]

\section{Proofs of Section \ref{sec:error-analysis}}

\subsection{Proof of Proposition \ref{prop:KL-error}}\label{proof:KL-error}

As in Subsection~\ref{app:thm-ALD-dim-explicit}, we consider the reference SDE
\begin{equation*}
dY_t^d = \Gamma^d \left( \nabla \log \rho_t^d    +v^d_t  \right)(Y_t^d) dt + \sqrt{2\Gamma^d} dW^d_t, \qquad Y_0^d \sim \rho_0^d,
\end{equation*}
with path measure $\mathbb{P}^{v,d}$ and $v^d=(v_t^d)_{t\in[0,T]}$ so that $\mathrm{Law}_{\mathbb{P}^{v,d}}(Y_t^d)=\rho_t^d$, and compare it to the approximate SDE 
\begin{equation*}
d\widetilde{X}_t^d = \Gamma^d \widetilde s_t^{\,d}(\widetilde X_t^d) dt + \sqrt{2\Gamma^d} d\widetilde{W}_t^d, \qquad \widetilde{X}_0^d \sim  \widetilde \rho_0^d,
\end{equation*}
with path measure $\widetilde{\mathbb{Q}}^d$.  Recall that
$\epsilon_t^d(x):=\nabla\log\rho_t^d(x)-\widetilde s_t^{\,d}(x)$.

By Lemma~\ref{lem:girsanov},
\[
\mathrm{KL}(\mathbb{P}^{v,d}\,\|\,\widetilde{\mathbb{Q}}^d)
=
\mathrm{KL}(\rho_0^d\,\|\,\widetilde\rho_0^d)
+\frac14\int_0^T\!\int
\big\|(\Gamma^d)^{1/2}\big(v_t^d+\epsilon_t^d\big)(x)\big\|^2\,d\rho_t^d(x)\,dt.
\]
Using $\|a+b\|^2\le 2\|a\|^2+2\|b\|^2$, we obtain
\[
\mathrm{KL}(\mathbb{P}^{v,d}\,\|\,\widetilde{\mathbb{Q}}^d)
\le
 \mathrm{KL}(\rho_0^d\,\|\,\widetilde\rho_0^d) 
+
 \frac12\int_0^T\!\int
\big\|(\Gamma^d)^{1/2}v_t^d(x)\big\|^2\,d\rho_t^d(x)\,dt 
+
 \frac12\int_0^T\!\int
\big\|(\Gamma^d)^{1/2}\epsilon_t^d(x)\big\|^2\,d\rho_t^d(x)\,dt .
\]
Since $v^d$ is chosen so that $\mathrm{Law}_{\mathbb{P}^{v,d}}(Y_t^d)=\rho_t^d$, the data-processing inequality yields
$\mathrm{KL}(\rho_\star^d\,\|\,\widetilde \rho^{\mathrm{ALD},d}_T) \leq \mathrm{KL}(\mathbb{P}^{v,d}\,\|\,\widetilde{\mathbb{Q}}^d)$. Hence
\[
\mathrm{KL}(\rho_\star^d\,\|\,\widetilde \rho^{\mathrm{ALD},d}_T)
\le
\mathrm{KL}(\rho_0^d\,\|\,\widetilde\rho_0^d)
+\frac12\int_0^T\!\!\int
\big\|(\Gamma^d)^{1/2}v_t^d(x)\big\|^2\,d\rho_t^d(x)\,dt
+\frac12\int_0^T\!\!\int
\big\|(\Gamma^d)^{1/2}\epsilon_t^d(x)\big\|^2\,d\rho_t^d(x)\,dt .
\]
Taking the infimum over all such $v^d$, which satisfy
$\partial_t\rho_t^d=-\nabla\cdot(\rho_t^d\Gamma^d v_t^d)$, yields
\[
\mathrm{KL}(\rho_\star^d\,\|\,\widetilde \rho^{\mathrm{ALD},d}_T)
\le
\mathcal E_{\mathrm{init}}^d
+\mathcal E_{\mathrm{bias},T}^d
+\mathcal E_{\mathrm{score},T}^d,
\]
where
\[
\mathcal E_{\mathrm{init}}^d:=\mathrm{KL}(\rho_0^d\,\|\,\widetilde\rho_0^d),\qquad
\mathcal E_{\mathrm{score},T}^d:=\frac12\int_0^T\!\!\int
\big\|(\Gamma^d)^{1/2}\epsilon_t^d(x)\big\|^2\,d\rho_t^d(x)\,dt,
\]
and
\[
\mathcal E_{\mathrm{bias},T}^d
:=
\frac12\inf_{\substack{v^d:\ \partial_t\rho_t^d=-\nabla\cdot(\rho_t^d\Gamma^d v_t^d)}}
\int_0^T\!\!\int
\big\|(\Gamma^d)^{1/2}v_t^d(x)\big\|^2\,d\rho_t^d(x)\,dt .
\]

\subsection{Proof of Proposition \ref{prop:init-kl-upper-bound}}
At $t=0$, 
\[
\rho_0^d (x) = \sum_{i \in I} w_i \varphi^d_{i,0}(x), \qquad \widetilde{\rho}_0^d(x) = \sum_{i \in I} \widetilde{w}_i \widetilde{\varphi}^d_{i,0}(x),
\]
where
\[
\varphi^d_{i,0} = \mathcal{N}(m_i^d, \Sigma_i^d + C^d), \qquad \widetilde{\varphi}^d_{i,0} = \mathcal{N}(\widetilde{m}_i^d, \widetilde{\Sigma}^d_i +C^d).
\]
We have
\[
\mathrm{KL}(\rho_0^d || \widetilde{\rho}_0^d) = \sum_{i\in I} w_i \mathbb{E}_{X\sim \varphi^d_{i,0}}\left[ \log \frac{\sum_{\ell\in I} w_\ell \varphi^d_{\ell,0}(X)}{\sum_{\ell\in I} \widetilde w_\ell \widetilde \varphi^d_{\ell,0}(X)} \right].
\]
Consider the joint densities
\[
\phi^d(i,x) = w_i \varphi^d_{i,0}(x), \qquad \widetilde{\phi}^d(i,x) = \widetilde{w}_i \widetilde \varphi^d_{i,0}(x).
\]
Then by data-processing inequality, 
\[
\mathrm{KL}(\rho_0^d||\widetilde{\rho}_0^d) \leq \mathrm{KL}(\phi^d||\widetilde{\phi}^d) = \mathrm{KL}(w||\widetilde{w}) + \sum_{i\in I} w_i \mathrm{KL}(\varphi^d_{i,0}||\widetilde{\varphi}^d_{i,0}), 
\]
where
\[
\mathrm{KL}(w||\widetilde{w}) = \sum_{i \in I}w_i \log\frac{w_i}{\widetilde{w}_i},
\]
and each Gaussian-Gaussian KL is explicit:
\[
\begin{aligned} 
&\mathrm{KL}\!\big(\varphi_{i,0}^d \,\big\|\, \widetilde{\varphi}_{i,0}^d\big)\\
&=\frac12\left[
\big\|\big(\widetilde\Sigma_i^d+C^d\big)^{-1/2}\big(\widetilde m_i^d-m_i^d\big)\big\|_2^2\right.
\\ &\left.+
\operatorname{Tr}\!\left(
\big(\widetilde\Sigma_i^d+C^d\big)^{-1/2}\big(\Sigma_i^d+C^d\big)\big(\widetilde\Sigma_i^d+C^d\big)^{-1/2}
-I
-\log\!\Big(
\big(\widetilde\Sigma_i^d+C^d\big)^{-1/2}\big(\Sigma_i^d+C^d\big)\big(\widetilde\Sigma_i^d+C^d\big)^{-1/2}
\Big)
\right)
\right].
\end{aligned}
\]
Under the diagonal assumption,
\[
\mathrm{KL}\!\big(\varphi_{i,0}^d \,\big\|\, \widetilde{\varphi}_{i,0}^d\big)
=
\frac12\sum_{j=1}^d\left[
\frac{\sigma_{ij}+\lambda_j}{\sigma_{ij}+\Delta\sigma_{ij}+\lambda_j}-1
+\frac{(\Delta m_{ij})^2}{\sigma_{ij}+\Delta\sigma_{ij}+\lambda_j}
+\log\!\left(\frac{\sigma_{ij}+\Delta\sigma_{ij}+\lambda_j}{\sigma_{ij}+\lambda_j}\right)
\right].
\]

%%%

\subsection{Proof of Proposition \ref{prop:score-error-upper-bound}}\label{app:sec:score-error}

We work with the two annealed mixtures
$\rho_t^d$ and $\widetilde\rho_t^d$ introduced in the main text. Set
\[
\Sigma_{i,t}^d := \Sigma_i^d + \kappa_t C^d,
\qquad
\widetilde\Sigma_{i,t}^d := \widetilde\Sigma_i^d + \kappa_t C^d,
\]
where $\kappa_t := (T-t)/T \in[0,1]$. By the diagonal assumptions from Sections~\ref{sec:setting}--\ref{sec:error-analysis},
\[
C^d=\mathrm{Diag}(\lambda_j)_{j=1}^d,
\quad
\Gamma^d=\mathrm{Diag}(\gamma_j)_{j=1}^d,
\quad
\Sigma_i^d=\mathrm{Diag}(\sigma_{ij})_{j=1}^d,
\quad
\widetilde\Sigma_i^d=\mathrm{Diag}(\sigma_{ij}+\Delta\sigma_{ij})_{j=1}^d, 
\]
and therefore
\[
\Sigma_{i,t}^d=\mathrm{Diag}(\sigma_{ij}+\kappa_t\lambda_j)_{j=1}^d,
\qquad
\widetilde\Sigma_{i,t}^d=\mathrm{Diag}(\sigma_{ij}+\Delta\sigma_{ij}+\kappa_t\lambda_j)_{j=1}^d.
\]
We denote the (true and perturbed) component densities by
\[
\varphi_{i,t}^d := \mathcal N(m_i^d,\Sigma_{i,t}^d),
\qquad
\widetilde\varphi_{i,t}^d := \mathcal N(\widetilde m_i^d,\widetilde\Sigma_{i,t}^d), 
\] 
with
$\widetilde{m}_i^d = m_i^d + \Delta m_i^d$,
so that
\[
\rho_t^d(x)=\sum_{i\in I} w_i\,\varphi_{i,t}^d(x),
\qquad
\widetilde\rho_t^d(x)=\sum_{i\in I} \widetilde w_i\,\widetilde\varphi_{i,t}^d(x),
\qquad
\sum_{i\in I}w_i=\sum_{i\in I}\widetilde w_i=1.
\]
The responsibilities and component scores are
\[
p_{i,t}^d(x):=\frac{w_i\varphi_{i,t}^d(x)}{\rho_t^d(x)},
\qquad
\widetilde p_{i,t}^d(x):=\frac{\widetilde w_i\widetilde\varphi_{i,t}^d(x)}{\widetilde\rho_t^d(x)},
\]
and
\[
S_{i,t}^d(x):=-(\Sigma_{i,t}^d)^{-1}(x-m_i^d),
\qquad
\widetilde S_{i,t}^d(x):=-(\widetilde\Sigma_{i,t}^d)^{-1}(x-\widetilde m_i^d).
\]
With this notation,
\[
\nabla\log\rho_t^d(x)=\sum_{i\in I}p_{i,t}^d(x)\,S_{i,t}^d(x),
\qquad
\nabla\log\widetilde\rho_t^d(x)=\sum_{i\in I}\widetilde p_{i,t}^d(x)\,\widetilde S_{i,t}^d(x),
\]
and the score error
$\epsilon_t^d:=\nabla\log\rho_t^d-\nabla\log\widetilde\rho_t^d$
decomposes as
\begin{equation*}
\epsilon_t^d(x)
=
\underbrace{\sum_{i\in I}p_{i,t}^d(x)\big(S_{i,t}^d(x)-\widetilde S_{i,t}^d(x)\big)}_{=:I_1(x,t)}
+
\underbrace{\sum_{i\in I}\big(p_{i,t}^d(x)-\widetilde p_{i,t}^d(x)\big)\,\widetilde S_{i,t}^d(x)}_{=:I_2(x,t)}.
\end{equation*}
Hence proving Proposition \ref{prop:score-error-upper-bound} boils down to bounding the following two terms:
\begin{itemize}
    \item[(1)] A component-score mismatch 
    $\mathbb E_{\rho_t^d} \left[ \|(\Gamma^d)^{1/2} I_1(X,t)\|^2\right]$. 
    \item[(2)] A responsibility mismatch
    $
   \mathbb E_{\rho_t^d} \left[ \|(\Gamma^d)^{1/2} I_2(X,t)\|^2\right]$. 
\end{itemize}
In what follows, we use the same notation for finite and countable mixtures. For countable $I$, each estimate can be justified by first replacing the mixtures with their normalized restrictions to finite sets $I_N\uparrow I$ and then letting $N\to\infty$. In the component-score mismatch, this normalization only changes the finite mixture weights and disappears in the limit; in the responsibility mismatch, the same normalization cancels in the ratios defining the responsibilities. Thus the finite-mixture estimates pass to the countable case whenever the quantities appearing in the resulting upper bounds are finite.

\subsubsection*{\textbf{(1) Component-Score Mismatch: Proof of \eqref{eq:first-term-bound-main}}}\label{app:I1}

Define $\Delta S_{i,t}^d:=\widetilde S_{i,t}^d-S_{i,t}^d$. A direct expansion gives
\[
\Delta S_{i,t}^d(x)
=
\big((\Sigma_{i,t}^d)^{-1}-(\widetilde\Sigma_{i,t}^d)^{-1}\big)(x-m_i^d)
+
(\widetilde\Sigma_{i,t}^d)^{-1}(\widetilde m_i^d-m_i^d).
\]
Let $\Delta m_i^d:=\widetilde m_i^d-m_i^d$ and write $\Delta m_{ij}$ for its coordinates.

Using Jensen's inequality for the convex quadratic form $z\mapsto \|(\Gamma^d)^{1/2}z\|^2$  and $\sum_{i}p_{i,t}^d(x)=1$,
\[
\big\|(\Gamma^d)^{1/2}I_1(x,t)\big\|^2
=
\Big\|(\Gamma^d)^{1/2}\sum_{i\in I}p_{i,t}^d(x)\Delta S_{i,t}^d(x)\Big\|^2
\le
\sum_{i\in I}p_{i,t}^d(x)\,\big\|(\Gamma^d)^{1/2}\Delta S_{i,t}^d(x)\big\|^2.
\]
Integrating and using $\rho_t^d(x)p_{i,t}^d(x)=w_i\varphi_{i,t}^d(x)$ yields
\begin{equation}\label{eq:I1-reduce-app}
\mathbb E_{\rho_t^d}\big[\|(\Gamma^d)^{1/2}I_1(X,t)\|^2\big]
\le
\sum_{i\in I}w_i\int_{\mathbb R^d}\big\|(\Gamma^d)^{1/2}\Delta S_{i,t}^d(x)\big\|^2\,\varphi_{i,t}^d(x)\,dx.
\end{equation}
Using $(a+b)^2\le 2a^2+2b^2$,
\[
\big\|(\Gamma^d)^{1/2}\Delta S_{i,t}^d(x)\big\|^2
\le
2\big\|(\Gamma^d)^{1/2}\big((\Sigma_{i,t}^d)^{-1}-(\widetilde\Sigma_{i,t}^d)^{-1}\big)(x-m_i^d)\big\|^2
+
2\big\|(\Gamma^d)^{1/2}(\widetilde\Sigma_{i,t}^d)^{-1}\Delta m_i^d\big\|^2.
\]

Since everything is diagonal, the second term is explicit:
\[
\big\|(\Gamma^d)^{1/2}(\widetilde\Sigma_{i,t}^d)^{-1}\Delta m_i^d\big\|^2
=
\sum_{j=1}^d \gamma_j\,\frac{(\Delta m_{ij})^2}{(\sigma_{ij}+\Delta\sigma_{ij}+\kappa_t\lambda_j)^2}.
\]
For the first term, set
\[
b_{ij}(t)
:=
\frac{1}{\sigma_{ij}+\kappa_t\lambda_j}
-
\frac{1}{\sigma_{ij}+\Delta\sigma_{ij}+\kappa_t\lambda_j}
=
\frac{\Delta\sigma_{ij}}{(\sigma_{ij}+\kappa_t\lambda_j)(\sigma_{ij}+\Delta\sigma_{ij}+\kappa_t\lambda_j)},
\]
so that $({\Sigma}_{i,t}^d)^{-1}-(\widetilde {\Sigma}_{i,t}^d)^{-1}$ has $j$-th diagonal entry $b_{ij}(t)$. Then
\[
\big\|(\Gamma^d)^{1/2}\big((\Sigma_{i,t}^d)^{-1}-(\widetilde\Sigma_{i,t}^d)^{-1}\big)(x-m_i^d)\big\|^2
=
\sum_{j=1}^d \gamma_j\,b_{ij}(t)^2\,(x_j-m_{ij})^2.
\]
Since $\mathbb E[(X_j-m_{ij})^2]=\sigma_{ij}+\kappa_t\lambda_j$ with $X\sim\mathcal N(m_i^d,\Sigma_{i,t}^d)$, we have
\begin{align*}
\int \big\|(\Gamma^d)^{1/2}\big((\Sigma_{i,t}^d)^{-1}-(\widetilde\Sigma_{i,t}^d)^{-1}\big)(x-m_i^d)\big\|^2\,\varphi_{i,t}^d(x)\,dx
&=
\sum_{j=1}^d \gamma_j\,b_{ij}(t)^2\,(\sigma_{ij}+\kappa_t\lambda_j)\\
&=
\sum_{j=1}^d \gamma_j\,
\frac{(\Delta\sigma_{ij})^2}{(\sigma_{ij}+\kappa_t\lambda_j)(\sigma_{ij}+\Delta\sigma_{ij}+\kappa_t\lambda_j)^2}.
\end{align*}
Plugging everything into \eqref{eq:I1-reduce-app} gives
\[
\mathbb E_{\rho_t^d}\big[\|(\Gamma^d)^{1/2}I_1(X,t)\|^2\big]
\le
2\sum_{i\in I}w_i\sum_{j=1}^d \gamma_j
\left[
\frac{(\Delta m_{ij})^2}{(\sigma_{ij}+\Delta\sigma_{ij}+\kappa_t\lambda_j)^2}
+
\frac{(\Delta\sigma_{ij})^2}{(\sigma_{ij}+\kappa_t\lambda_j)(\sigma_{ij}+\Delta\sigma_{ij}+\kappa_t\lambda_j)^2}
\right],
\]
which is exactly \eqref{eq:first-term-bound-main} with
\[
\mathsf{B}_{\mathrm{comp}}^d(t)
:=
2\sum_{i\in I} w_i \sum_{j=1}^d \gamma_j
\frac{(\Delta m_{ij})^2+\frac{(\Delta\sigma_{ij})^2}{\sigma_{ij}+\kappa_t\lambda_j}}
{\big(\sigma_{ij}+\Delta\sigma_{ij}+\kappa_t\lambda_j\big)^2}.
\]
This proves the first part of Proposition~\ref{prop:score-error-upper-bound}.

\subsubsection*{\textbf{(2) Responsibility Mismatch: Proof of \eqref{eq:second-term-bound-main}}}
\label{app:I2}

In this part it is convenient to abbreviate
\[
\rho:=\rho_t^d,
\qquad
\widetilde\rho:=\widetilde\rho_t^d,
\qquad
\varphi_i:=\varphi_{i,t}^d,
\qquad
\widetilde\varphi_i:=\widetilde\varphi_{i,t}^d,
\]
\[
p_i:=\frac{w_i\varphi_i}{\rho},
\qquad
\widetilde p_i:=\frac{\widetilde w_i\widetilde\varphi_i}{\widetilde\rho},
\qquad
\widetilde S_i:=\widetilde S_{i,t}^d(\cdot).
\]
To control the
responsibility mismatch
    \[
   \mathbb E_{\rho_t^d} \left[ \left\|(\Gamma^d)^{1/2} I_2(X,t)\right\|^2\right], \qquad  \text{with} \quad I_2(x,t):=   \sum_{i\in I}\big( p_{i,t}^d(x)- \widetilde p_{i,t}^d(x)\big)\,\widetilde S_{i,t}^d(x),
   \]
requires three steps:
\begin{enumerate}
    \item First, we rewrite the perturbed responsibilities in
terms of the exact ones and express $I_2$ in centered form around the barycenter
\[
\overline S(x):=\sum_{i\in I}p_i(x)\widetilde S_i(x).
\]
This leads to the pointwise decomposition
\[
\big\|(\Gamma^d)^{1/2}I_2(x,t)\big\|^2
\le
A_{\mathrm{resp}}(x,t)\,V_{\mathrm{score}}(x,t),
\]
given in \eqref{eq:I2-pointwise-resp}. The factor $A_{\mathrm{resp}}$ measures
the change in the responsibilities, while $V_{\mathrm{score}}$ measures the
corresponding variation of the perturbed component scores.
    \item Second, we bound these two factors separately. The responsibility factor is
controlled by the quantities $\mathcal D(t,d)$ and $\mathcal R(t,d)$, defined in
\eqref{eq:D-def-resp} and \eqref{eq:R-def-resp}; this gives
\eqref{eq:Aresp-L2-final}. The score-variation factor is controlled through
pairwise differences $\widetilde S_i-\widetilde S_\ell$, leading to the
quantity $\mathcal S_{\mathrm{pair}}(t,d)$ defined in
\eqref{eq:Spair-def-resp}; this gives \eqref{eq:Ubar-L2-final-resp}. Combining
the two estimates yields Proposition~\ref{prop:responsibility-main}. 
    \item Third, we derive sufficient conditions ensuring that
$\mathcal D(t,d)$, $\mathcal R(t,d)$, and
$\mathcal S_{\mathrm{pair}}(t,d)$ are bounded uniformly in
$d\ge1$ and $t\in[0,T]$. The quantities $\mathcal D$ and
$\mathcal R$ are controlled by expanding the Gaussian density-ratio moments;
the resulting conditions are given
in Proposition~\ref{prop:resp-density-log}. The pairwise variation of the
perturbed component scores is controlled in Proposition~\ref{prop:resp-score}.
The final sufficient conditions, written directly in terms of
\[
(\sigma_{ij},m_{ij},\Delta\sigma_{ij},\Delta m_{ij},\lambda_j,\gamma_j),
\]
are summarized in Proposition~\ref{prop:resp-concrete}.
\end{enumerate}

\paragraph{Decomposition of the responsibility mismatch. } We first bound $\big\|(\Gamma^d)^{1/2}I_2(x,t)\big\|^2$ by a product of two quantities: one measuring the change in the responsibilities and one measuring the corresponding variation of the perturbed component scores.

Define
\[
\Delta w_i:=\widetilde w_i-w_i,
\qquad
\beta_i:=\frac{\widetilde w_i}{w_i},
\qquad
r_i(x):=\frac{\widetilde\varphi_i(x)}{\varphi_i(x)},
\qquad
\beta_*:=\inf_{i\in I}\beta_i.
\]
The perturbed responsibilities can be written in terms of the exact
responsibilities. Indeed, since
\[
\widetilde w_i\widetilde\varphi_i(x)
=
w_i\varphi_i(x)\,\beta_i r_i(x)
=
\rho(x)\,p_i(x)\,\beta_i r_i(x),
\]
we have
\[
\widetilde\rho(x)
=
\sum_{k\in I}\widetilde w_k\widetilde\varphi_k(x)
=
\rho(x)\sum_{k\in I}p_k(x)\beta_k r_k(x).
\]
Therefore,
\[
\widetilde p_i(x)
=
\frac{\widetilde w_i\widetilde\varphi_i(x)}{\widetilde\rho(x)}
=
\frac{\rho(x)p_i(x)\beta_i r_i(x)}
{\rho(x)\sum_{k\in I}p_k(x)\beta_k r_k(x)}
=
p_i(x)
\frac{\beta_i r_i(x)}
{\sum_{k\in I}p_k(x)\beta_k r_k(x)}.
\]
Hence
\begin{equation*}
\widetilde p_i-p_i
=
p_i
\left(
\frac{\beta_i r_i}{\sum_{k\in I}p_k\beta_k r_k}
-1
\right).
\end{equation*}
Consequently,
\begin{equation*}
I_2(x,t)
=
\sum_{i\in I}p_i(x)
\left(1-
\frac{\beta_i r_i(x)}
{\sum_{k\in I}p_k(x)\beta_k r_k(x)}
\right)
\widetilde S_i(x).
\end{equation*}
Introduce
\[
z_i(x)
:=
\frac{\beta_i r_i(x)}
{\sum_{k\in I}p_k(x)\beta_k r_k(x)}.
\]
Then
\[
I_2(x,t)
=-
\sum_{i\in I}p_i(x)(z_i(x)-1)\widetilde S_i(x).
\]
Now define the $p_i$-weighted barycenter
\[
\overline S(x)
:=
\sum_{i\in I}p_i(x)\widetilde S_i(x).
\]
Since
\[
\sum_{i\in I}p_i(x)(z_i(x)-1)=0,
\]
we may subtract this barycenter and obtain the centered representation
\[
I_2(x,t)
=
\sum_{i\in I}p_i(x)(z_i(x)-1)
\big(\widetilde S_i(x)-\overline S(x)\big).
\]
Weighted Cauchy--Schwarz gives
\begin{equation}\label{eq:I2-pointwise-resp}
\big\|(\Gamma^d)^{1/2}I_2(x,t)\big\|^2
\le
A_{\mathrm{resp}}(x,t)\,V_{\mathrm{score}}(x,t),
\end{equation}
where
\begin{equation*}
A_{\mathrm{resp}}(x,t)
:=
\sum_{i\in I}p_i(x)
\left(
\frac{\beta_i r_i(x)}
{\sum_{k\in I}p_k(x)\beta_k r_k(x)}
-1
\right)^2,
\end{equation*}
and \begin{equation}\label{eq:Vscore-pairwise}
V_{\mathrm{score}}(x,t)
:=
\frac12
\sum_{i,\ell\in I}p_i(x)p_\ell(x)
\big\|(\Gamma^d)^{1/2}
\big(\widetilde S_i(x)-\widetilde S_\ell(x)\big)
\big\|^2.
\end{equation}
Here \eqref{eq:Vscore-pairwise} comes from applying the weighted variance identity to
\[
\sum_{i\in I}p_i(x)
\big\|(\Gamma^d)^{1/2}
\big(\widetilde S_i(x)-\overline S(x)\big)
\big\|^2.
\]

\paragraph{Controlling the change in responsibilities.}
We control the factor $A_{\mathrm{resp}}$ in
\eqref{eq:I2-pointwise-resp}. This factor depends only on how the
responsibilities of the perturbed annealed mixture $\widetilde\rho_t^d$
differ from those of the exact annealed mixture $\rho_t^d$. The estimate keeps
the quantity $\beta_i r_i-1$ explicit, so that this part of the bound vanishes
when the weights and component densities are not perturbed. The goal of the
calculation below is to bound 
\[ 
\mathbb E_\rho[A_{\mathrm{resp}}(X,t)^2],
\] 
in terms of
the two quantities $\mathcal D(t,d)$ and $\mathcal R(t,d)$ defined in
\eqref{eq:D-def-resp} and \eqref{eq:R-def-resp}.

Set
\[
\overline z(x)
:=
\sum_{k\in I}p_k(x)\beta_k r_k(x).
\]
Then
\[
A_{\mathrm{resp}}(x,t)
=
\sum_{i\in I}p_i(x)
\left(
\frac{\beta_i r_i(x)}{\overline z(x)}
-1
\right)^2
=
\overline z(x)^{-2}
\sum_{i\in I}p_i(x)
\big(\beta_i r_i(x)-\overline z(x)\big)^2.
\]
Using the weighted variance inequality
\[
\sum_i p_i(u_i-\overline u)^2
\le
\sum_i p_i(u_i-c)^2
\qquad
\text{for every }c\in\mathbb R,
\]
with $u_i=\beta_i r_i$ and $c=1$, we obtain
\begin{equation}\label{eq:Aresp-first-bound}
A_{\mathrm{resp}}(x,t)
\le
\frac{\sum_{i\in I}p_i(x)(\beta_i r_i(x)-1)^2}
{\overline z(x)^2}.
\end{equation}
Assume $\beta_*>0$. Since $\beta_i \ge \beta_*$ and $u\mapsto u^{-2}$ is convex on $(0,\infty)$,
\begin{equation}\label{eq:zbar-inverse-bound}
\overline z(x)^{-2}
\le
\sum_{k\in I}p_k(x)\beta_k^{-2}r_k(x)^{-2}
\le
\beta_*^{-2}
\sum_{k\in I}p_k(x)r_k(x)^{-2}.
\end{equation}
Combining \eqref{eq:Aresp-first-bound} and \eqref{eq:zbar-inverse-bound} gives
\begin{equation}\label{eq:Aresp-second-bound}
A_{\mathrm{resp}}(x,t)
\le
\beta_*^{-2}
\left(
\sum_{i\in I}p_i(x)(\beta_i r_i(x)-1)^2
\right)
\left(
\sum_{k\in I}p_k(x)r_k(x)^{-2}
\right).
\end{equation}
Therefore, by Jensen's inequality applied separately to the two sums,
\begin{equation}\label{eq:Aresp-square-bound}
A_{\mathrm{resp}}(x,t)^2
\le
\beta_*^{-4}
\left(
\sum_{i\in I}p_i(x)(\beta_i r_i(x)-1)^4
\right)
\left(
\sum_{k\in I}p_k(x)r_k(x)^{-4}
\right).
\end{equation}

Introduce
\begin{equation}\label{eq:D-def-resp}
\mathcal D(t,d)
:=
\sum_{i\in I}w_iD_i(t,d),
\qquad
D_i(t,d)
:=
\mathbb E_{\varphi_i}
\left[
(\beta_i r_i(X)-1)^8
\right],
\end{equation}
and
\begin{equation}\label{eq:R-def-resp}
\mathcal R(t,d)
:=
\sum_{i\in I}w_iR_i(t,d),
\qquad
R_i(t,d)
:=
\mathbb E_{\varphi_i}
\left[
r_i(X)^{-8}
\right].
\end{equation}
Set
\[
X_{\mathrm{resp}}(x)
:=
\sum_{i\in I}p_i(x)(\beta_i r_i(x)-1)^4,
\qquad
Y_{\mathrm{resp}}(x)
:=
\sum_{k\in I}p_k(x)r_k(x)^{-4}.
\]
Then \eqref{eq:Aresp-square-bound} reads
\[
A_{\mathrm{resp}}(x,t)^2
\le
\beta_*^{-4}
X_{\mathrm{resp}}(x)Y_{\mathrm{resp}}(x).
\]
Hence, by Cauchy--Schwarz in $L^2(\rho)$,
\begin{equation}\label{eq:Aresp-L2-first}
\mathbb E_\rho[A_{\mathrm{resp}}(X,t)^2]
\le
\beta_*^{-4}
\mathbb E_\rho[X_{\mathrm{resp}}(X)^2]^{1/2}
\mathbb E_\rho[Y_{\mathrm{resp}}(X)^2]^{1/2}.
\end{equation}
Since the $p_i(x)$ are probability weights,
\[
X_{\mathrm{resp}}(x)^2
\le
\sum_{i\in I}p_i(x)(\beta_i r_i(x)-1)^8,
\qquad
Y_{\mathrm{resp}}(x)^2
\le
\sum_{k\in I}p_k(x)r_k(x)^{-8}.
\]
Integrating and using $p_i\rho=w_i\varphi_i$, we obtain
\[
\mathbb E_\rho[X_{\mathrm{resp}}(X)^2]
\le
\mathcal D(t,d),
\qquad
\mathbb E_\rho[Y_{\mathrm{resp}}(X)^2]
\le
\mathcal R(t,d).
\]
Therefore \eqref{eq:Aresp-L2-first} yields
\begin{equation}\label{eq:Aresp-L2-final}
\mathbb E_\rho[A_{\mathrm{resp}}(X,t)^2]
\le
\beta_*^{-4}
\mathcal D(t,d)^{1/2}
\mathcal R(t,d)^{1/2}.
\end{equation}

\paragraph{Controlling the pairwise variation of the perturbed component scores.}
We now control the factor $V_\text{score}$ in \eqref{eq:I2-pointwise-resp}. This factor
contains the variation of the perturbed component scores that is multiplied by
the responsibility error. The purpose of this part is to bound its
$L^2(\rho)$ contribution by the pairwise quantity
$\mathcal S_{\mathrm{pair}}(t,d)$ defined in \eqref{eq:Spair-def-resp}.

For each $\ell\in I$, define
\[
G_{i\ell}(x,t)
:=
\big\|(\Gamma^d)^{1/2}
\big(\widetilde S_i(x)-\widetilde S_\ell(x)\big)
\big\|^2,
\qquad
U_\ell(x,t)
:=
\sum_{i\in I}p_i(x)G_{i\ell}(x,t).
\]
The barycenter $\overline S(x)$ minimizes the weighted quadratic energy. Indeed,
for every $c\in\mathbb R^d$,
\begin{equation*}
\sum_{i\in I}p_i(x)
\big\|(\Gamma^d)^{1/2}(\widetilde S_i(x)-c)\big\|^2
=
V_{\mathrm{score}}(x,t)
+
\big\|(\Gamma^d)^{1/2}(\overline S(x)-c)\big\|^2.
\end{equation*}
Taking $c=\widetilde S_\ell(x)$ gives
\[
V_{\mathrm{score}}(x,t)\le U_\ell(x,t)
\qquad
\text{for every }\ell\in I.
\]
Averaging this inequality with respect to the original mixture weights gives
\begin{equation}\label{eq:Vscore-Ubar}
V_{\mathrm{score}}(x,t)
\le
\overline U(x,t),
\qquad
\overline U(x,t)
:=
\sum_{\ell\in I}w_\ell U_\ell(x,t).
\end{equation}
Combining \eqref{eq:I2-pointwise-resp} and \eqref{eq:Vscore-Ubar}, we obtain
\begin{equation}\label{eq:I2-pointwise-final}
\big\|(\Gamma^d)^{1/2}I_2(x,t)\big\|^2
\le
A_{\mathrm{resp}}(x,t)\,\overline U(x,t).
\end{equation}

It remains to bound the $L^2(\rho)$ norm of $\overline U$. Since
$(w_\ell)_{\ell\in I}$ is a probability vector, Jensen's inequality gives
\[
\overline U(x,t)^2
\le
\sum_{\ell\in I}w_\ell U_\ell(x,t)^2.
\]
Therefore
\begin{equation}\label{eq:Ubar-L2-first-resp}
\mathbb E_\rho[\overline U(X,t)^2]
\le
\sum_{\ell\in I}w_\ell
\mathbb E_\rho[U_\ell(X,t)^2].
\end{equation}
For fixed $\ell$, using again $\sum_i p_i(x)=1$, Jensen's inequality gives
\[
U_\ell(x,t)^2
\le
\sum_{i\in I}p_i(x)G_{i\ell}(x,t)^2.
\]
Integrating against $\rho$ and using $p_i\rho=w_i\varphi_i$, we obtain
\begin{equation}\label{eq:Uell-L2-resp}
\mathbb E_\rho[U_\ell(X,t)^2]
\le
\sum_{i\in I}w_i
\mathbb E_{\varphi_i}[G_{i\ell}(X,t)^2].
\end{equation}
Combining \eqref{eq:Ubar-L2-first-resp} and \eqref{eq:Uell-L2-resp},
\begin{equation}\label{eq:Ubar-L2-second-resp}
\mathbb E_\rho[\overline U(X,t)^2]
\le
\sum_{i,\ell\in I}w_iw_\ell
\mathbb E_{\varphi_i}[G_{i\ell}(X,t)^2].
\end{equation}

We now compute the last expectation explicitly. Define
\[
v_{ij}(t):=\sigma_{ij}+\kappa_t\lambda_j,
\qquad
\widetilde v_{ij}(t):=\sigma_{ij}+\Delta\sigma_{ij}+\kappa_t\lambda_j,
\qquad
\widetilde m_{ij}:=m_{ij}+\Delta m_{ij}.
\]
Coordinatewise,
\begin{equation*}
\widetilde S_{i,t,j}^d(x)-\widetilde S_{\ell,t,j}^d(x)
=
-a_{i\ell,j}(t)x_j+b_{i\ell,j}(t),
\end{equation*}
where
\begin{equation}\label{eq:a-def-resp}
a_{i\ell,j}(t)
:=
\frac{1}{\widetilde v_{ij}(t)}
-
\frac{1}{\widetilde v_{\ell j}(t)}
=
\frac{
(\sigma_{\ell j}-\sigma_{ij})
+
(\Delta\sigma_{\ell j}-\Delta\sigma_{ij})
}{
\widetilde v_{ij}(t)\widetilde v_{\ell j}(t)
},
\end{equation}
and
\begin{equation*}
b_{i\ell,j}(t)
:=
\frac{\widetilde m_{ij}}{\widetilde v_{ij}(t)}
-
\frac{\widetilde m_{\ell j}}{\widetilde v_{\ell j}(t)}.
\end{equation*}
If $X\sim\varphi_i$, then $X_j\sim\mathcal N(m_{ij},v_{ij}(t))$. Define
\begin{equation}\label{eq:mu-def-resp}
\mu_{i\ell,j}(t)
:=
b_{i\ell,j}(t)-a_{i\ell,j}(t)m_{ij}
=
\frac{\Delta m_{ij}}{\widetilde v_{ij}(t)}
-
\frac{m_{\ell j}-m_{ij}+\Delta m_{\ell j}}
{\widetilde v_{\ell j}(t)}.
\end{equation}
Then
\[
-a_{i\ell,j}(t)X_j+b_{i\ell,j}(t)
=
-a_{i\ell,j}(t)(X_j-m_{ij})
+
\mu_{i\ell,j}(t),
\]
and this Gaussian random variable has mean $\mu_{i\ell,j}(t)$ and variance
$a_{i\ell,j}(t)^2v_{ij}(t)$. Hence its fourth moment is
\[
3a_{i\ell,j}(t)^4v_{ij}(t)^2
+
6a_{i\ell,j}(t)^2v_{ij}(t)\mu_{i\ell,j}(t)^2
+
\mu_{i\ell,j}(t)^4.
\]
Define
\begin{equation}\label{eq:H-def-resp}
H_{i\ell,j}(t)
:=
\left(
3a_{i\ell,j}(t)^4v_{ij}(t)^2
+
6a_{i\ell,j}(t)^2v_{ij}(t)\mu_{i\ell,j}(t)^2
+
\mu_{i\ell,j}(t)^4
\right)^{1/2}.
\end{equation}
Since
\[
G_{i\ell}(X,t)
=
\sum_{j=1}^d
\gamma_j
\bigl(-a_{i\ell,j}(t)X_j+b_{i\ell,j}(t)\bigr)^2,
\]
Minkowski's inequality in $L^2(\varphi_i)$ gives
\[
\mathbb E_{\varphi_i}[G_{i\ell}(X,t)^2]^{1/2}
\le
\sum_{j=1}^d\gamma_jH_{i\ell,j}(t).
\]
Introduce the pairwise score-variation factor
\begin{equation}\label{eq:Spair-def-resp}
\mathcal S_{\mathrm{pair}}(t,d)
:=
\sum_{i,\ell\in I}w_iw_\ell
\left(
\sum_{j=1}^d\gamma_jH_{i\ell,j}(t)
\right)^2.
\end{equation}
By \eqref{eq:Ubar-L2-second-resp},
\begin{equation}\label{eq:Ubar-L2-final-resp}
\mathbb E_\rho[\overline U(X,t)^2]^{1/2}
\le
\mathcal S_{\mathrm{pair}}(t,d)^{1/2}.
\end{equation}

\paragraph{Responsibility-mismatch bound.}
Combining the estimate for the change in responsibilities with the estimate for
the pairwise variation of the perturbed component scores yields the following
bound.

\begin{proposition}\label{prop:responsibility-main}
Assume $\beta_*:=\inf_{i\in I}\beta_i>0$. Then, for every $d\ge1$ and
$t\in[0,T]$,
\begin{equation}\label{eq:responsibility-main-bound}
\mathbb E_{\rho_t^d}
\left[
\big\|(\Gamma^d)^{1/2}I_2(X,t)\big\|^2
\right]
\le
\beta_*^{-2}
\mathcal D(t,d)^{1/4}
\mathcal R(t,d)^{1/4}
\mathcal S_{\mathrm{pair}}(t,d)^{1/2}.
\end{equation}
Moreover, if all perturbations vanish, that is,
\[
\Delta w_i=0,
\qquad
\Delta\sigma_{ij}=0,
\qquad
\Delta m_{ij}=0
\qquad
\text{for all }i,j,
\]
then $\beta_i=1$ and $r_i\equiv1$. Hence
\[
\mathcal D(t,d)=0,
\]
and the right-hand side of \eqref{eq:responsibility-main-bound} is equal to
zero.
\end{proposition}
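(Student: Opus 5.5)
The plan is to integrate the pointwise bound \eqref{eq:I2-pointwise-final} against $\rho=\rho_t^d$ and then separate the two factors with Cauchy--Schwarz in $L^2(\rho)$. Concretely, from
\[
\big\|(\Gamma^d)^{1/2}I_2(x,t)\big\|^2\le A_{\mathrm{resp}}(x,t)\,\overline U(x,t),
\]
taking expectations gives
\[
\mathbb E_\rho\big[\|(\Gamma^d)^{1/2}I_2(X,t)\|^2\big]
\le
\mathbb E_\rho[A_{\mathrm{resp}}(X,t)^2]^{1/2}\,
\mathbb E_\rho[\overline U(X,t)^2]^{1/2}.
\]

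Each factor is then handled by an estimate already derived above.

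\textbf{Responsibility factor.} By \eqref{eq:Aresp-L2-final}, which holds under the standing assumption $\beta_*>0$,
\[
\mathbb E_\rho[A_{\mathrm{resp}}^2]^{1/2}\le \beta_*^{-2}\mathcal D(t,d)^{1/4}\mathcal R(t,d)^{1/4}.
\]

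\textbf{Score-variation factor.} By \eqref{eq:Ubar-L2-final-resp},
\[
\mathbb E_\rho[\overline U^2]^{1/2}\le\mathcal S_{\mathrm{pair}}(t,d)^{1/2}.
\]
Multiplying these two bounds yields \eqref{eq:responsibility-main-bound} directly.

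I expect the main obstacle to be a matter of care rather than difficulty. Every intermediate quantity must be finite or the inequality read in $[0,\infty]$, and for countable $I$ one goes through the finite truncations $I_N\uparrow I$ as described earlier. If the right-hand side is infinite, the statement is trivial.

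For the vanishing statement, suppose $\Delta w_i=\Delta\sigma_{ij}=\Delta m_{ij}=0$ for all $i,j$. Then $\widetilde w_i=w_i$, so $\beta_i=1$. In addition, $\widetilde m_i^d=m_i^d$ and $\widetilde\Sigma_{i,t}^d=\Sigma_{i,t}^d$, so $\widetilde\varphi_i=\varphi_i$ and $r_i\equiv1$. Consequently $(\beta_ir_i-1)^8\equiv0$, every $D_i(t,d)=0$, and hence $\mathcal D(t,d)=0$.

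For the right-hand side to actually be zero, the other factors must be finite, since $0\cdot\infty$ is not zero. Here $R_i=1$, so $\mathcal R=1$. The factor $\mathcal S_{\mathrm{pair}}$ is a finite sum of Gaussian moments for finite $I$, and for countable $I$ it is finite whenever the bound is meaningful. Alternatively, one can observe directly that $I_2\equiv0$ in this case, because $\widetilde p_i=p_i$; this makes the claim immediate regardless of the finiteness of the other factors.
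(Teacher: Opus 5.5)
Your proposal is correct and follows the paper's proof exactly. You integrate \eqref{eq:I2-pointwise-final}, apply Cauchy--Schwarz in $L^2(\rho)$, take square roots in \eqref{eq:Aresp-L2-final} and combine with \eqref{eq:Ubar-L2-final-resp}, then handle the vanishing case via $\beta_i=1$, $r_i\equiv1$; your further observations that the right-hand side is genuinely zero only if $\mathcal R$ and $\mathcal S_{\mathrm{pair}}$ are finite, and that in any case $I_2\equiv0$ because $\widetilde p_i=p_i$, are a worthwhile refinement the paper leaves implicit.
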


\begin{proof}
By \eqref{eq:I2-pointwise-final} and Cauchy--Schwarz in $L^2(\rho)$,
\[
\mathbb E_{\rho_t^d}
\left[
\big\|(\Gamma^d)^{1/2}I_2(X,t)\big\|^2
\right]
\le
\mathbb E_\rho[A_{\mathrm{resp}}(X,t)^2]^{1/2}
\mathbb E_\rho[\overline U(X,t)^2]^{1/2}.
\]
Applying \eqref{eq:Aresp-L2-final} and
\eqref{eq:Ubar-L2-final-resp} gives
\eqref{eq:responsibility-main-bound}.

If all perturbations vanish, then $\widetilde w_i=w_i$ and
$\widetilde\varphi_i=\varphi_i$, hence $\beta_i=1$ and $r_i\equiv1$. Therefore
\[
D_i(t,d)
=
\mathbb E_{\varphi_i}
\left[
(\beta_i r_i(X)-1)^8
\right]
=
0
\qquad
\text{for every }i,
\]
and so $\mathcal D(t,d)=0$.
\end{proof}

The bound separates the two mechanisms that enter the responsibility mismatch.
The quantities $\mathcal D(t,d)$ and $\mathcal R(t,d)$ control the change in
the responsibilities through moments of
\[
\frac{\widetilde w_i}{w_i}
\frac{\widetilde\varphi_{i,t}^d}{\varphi_{i,t}^d}
-1
\qquad
\text{and}
\qquad
\left(
\frac{\widetilde\varphi_{i,t}^d}{\varphi_{i,t}^d}
\right)^{-1}.
\]
The quantity $\mathcal S_{\mathrm{pair}}(t,d)$ controls the pairwise variation
of the perturbed component scores,
\[
\widetilde S_{i,t}^d-\widetilde S_{\ell,t}^d,
\]
weighted coordinate by coordinate by the preconditioner spectrum $(\gamma_j)$.

We will use the following consequence of Proposition~\ref{prop:responsibility-main}.

\begin{corollary}
\label{cor:resp-structural}
Assume $\beta_*:=\inf_{i\in I}\beta_i>0$ and
\begin{equation}\label{eq:resp-density-structural}
\sup_{t\in[0,T]}\sup_{d\ge1}\mathcal D(t,d)<\infty,
\qquad
\sup_{t\in[0,T]}\sup_{d\ge1}\mathcal R(t,d)<\infty,
\end{equation}
together with
\begin{equation*}
\sup_{t\in[0,T]}\sup_{d\ge1}\mathcal S_{\mathrm{pair}}(t,d)<\infty.
\end{equation*}
Then
\[
\sup_{t\in[0,T]}\sup_{d\ge1}
\mathbb E_{\rho_t^d}
\left[
\big\|(\Gamma^d)^{1/2}I_2(X,t)\big\|^2
\right]
<\infty.
\]
\end{corollary}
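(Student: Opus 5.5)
The corollary follows directly from Proposition~\ref{prop:responsibility-main}; the only work is to take suprema in its bound. Fix $t\in[0,T]$ and $d\ge1$. Since $\beta_*>0$ by assumption, \eqref{eq:responsibility-main-bound} gives
\[
\mathbb E_{\rho_t^d}\Big[\big\|(\Gamma^d)^{1/2}I_2(X,t)\big\|^2\Big]
\le
\beta_*^{-2}\,\mathcal D(t,d)^{1/4}\,\mathcal R(t,d)^{1/4}\,\mathcal S_{\mathrm{pair}}(t,d)^{1/2}.
\]
The factor $\beta_*^{-2}$ depends neither on $t$ nor on $d$. The weight ratios $\beta_i=\widetilde w_i/w_i$ are fixed by Assumption~\ref{assump:misspecified-mixture-score} and do not change with truncation.

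Next we bound each factor uniformly. The quantities $\mathcal D$, $\mathcal R$ and $\mathcal S_{\mathrm{pair}}$ are nonnegative, since they are built from even powers and squares. The maps $u\mapsto u^{1/4}$ and $u\mapsto u^{1/2}$ are nondecreasing on $[0,\infty)$. We therefore set
\[
M_{\mathcal D}:=\sup_{t,d}\mathcal D(t,d),\qquad M_{\mathcal R}:=\sup_{t,d}\mathcal R(t,d),\qquad M_{\mathcal S}:=\sup_{t,d}\mathcal S_{\mathrm{pair}}(t,d).
\]
By \eqref{eq:resp-density-structural} and the hypothesis on $\mathcal S_{\mathrm{pair}}$, all three are finite. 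It follows that
\[
\mathcal D(t,d)^{1/4}\,\mathcal R(t,d)^{1/4}\,\mathcal S_{\mathrm{pair}}(t,d)^{1/2}\le M_{\mathcal D}^{1/4}M_{\mathcal R}^{1/4}M_{\mathcal S}^{1/2}
\]
for every $t\in[0,T]$ and every $d\ge1$.

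Taking the supremum over $t\in[0,T]$ and $d\ge1$ on the left-hand side then yields the bound $\beta_*^{-2}M_{\mathcal D}^{1/4}M_{\mathcal R}^{1/4}M_{\mathcal S}^{1/2}<\infty$, which is the claim.

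There is no real obstacle here. The one point to check is that Proposition~\ref{prop:responsibility-main} holds for every $d$ and $t$ with the same constant $\beta_*$, which it does. For countable $I$, we also need the bound to pass through the finite-$I_N$ approximation described before the component-score part. This is automatic, because the hypotheses already give finite right-hand sides.
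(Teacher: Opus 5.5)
Your proposal is correct and matches the paper, which presents this corollary as an immediate consequence of Proposition~\ref{prop:responsibility-main} with no separate proof. The implicit argument is exactly yours: apply the bound $\beta_*^{-2}\mathcal D(t,d)^{1/4}\mathcal R(t,d)^{1/4}\mathcal S_{\mathrm{pair}}(t,d)^{1/2}$ at each $(t,d)$, note that $\beta_*$ is independent of $t$ and $d$, and take suprema.
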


\paragraph{Controlling $\mathcal D(t,d)$ and $\mathcal R(t,d)$.}
We now turn to the two quantities that control the change in the responsibilities in
Proposition~\ref{prop:responsibility-main}. Recall that
\[
\mathcal D(t,d)
=
\sum_{i\in I}w_iD_i(t,d),
\qquad
D_i(t,d)
=
\mathbb E_{\varphi_i}
\left[
(\beta_i r_i(X)-1)^8
\right],
\]
and
\[
\mathcal R(t,d)
=
\sum_{i\in I}w_iR_i(t,d),
\qquad
R_i(t,d)
=
\mathbb E_{\varphi_i}
\left[
r_i(X)^{-8}
\right].
\]
Thus the problem is to control moments of ratios between perturbed and exact
Gaussian component densities, uniformly in the truncation dimension $d\ge1$ and in
the annealing time $t\in[0,T]$. Since the component densities are diagonal Gaussians,
these moments can be reduced to products of one-coordinate Gaussian integrals.
The purpose of the next definitions is to make this reduction explicit.

For $a,b\in\mathbb R$, define
\begin{equation}\label{eq:Psi-def-resp}
\Psi_{ik}^{(a,b)}(t,d)
:=
\mathbb E_{\varphi_i}
\left[
r_i(X)^a r_k(X)^b
\right].
\end{equation}
We write $\phi(x;m,v)$ for the one-dimensional Gaussian density with mean $m$
and variance $v>0$:
\[
\phi(x;m,v)
=
\frac{1}{\sqrt{2\pi v}}
\exp\left(-\frac{(x-m)^2}{2v}\right).
\]
Since $\varphi_i$ and $\widetilde\varphi_i$ are product Gaussian densities in
the diagonal setting, the ratio moments in \eqref{eq:Psi-def-resp} factorize
over the coordinates. To make this explicit, define the one-coordinate ratios
\[
r_{i,j}(x)
:=
\frac{
\phi(x;\widetilde m_{ij},\widetilde v_{ij}(t))
}{
\phi(x;m_{ij},v_{ij}(t))
},
\qquad
r_{k,j}(x)
:=
\frac{
\phi(x;\widetilde m_{kj},\widetilde v_{kj}(t))
}{
\phi(x;m_{kj},v_{kj}(t))
},
\]
where
\[
v_{ij}(t)=\sigma_{ij}+\kappa_t\lambda_j,
\qquad
\widetilde v_{ij}(t)=\sigma_{ij}+\Delta\sigma_{ij}+\kappa_t\lambda_j.
\]
Then, for $x=(x_1,\dots,x_d)$,
\[
r_i(x)=\prod_{j=1}^d r_{i,j}(x_j),
\qquad
r_k(x)=\prod_{j=1}^d r_{k,j}(x_j).
\]
Moreover, if $X\sim\varphi_i$, then the coordinates are independent and
$X_j\sim\mathcal N(m_{ij},v_{ij}(t))$. Hence
\begin{equation}\label{eq:Psi-factorization-resp}
\Psi_{ik}^{(a,b)}(t,d)
=
\prod_{j=1}^d
\psi_{ik,j}^{(a,b)}(t),
\end{equation}
where
\begin{equation*}
\psi_{ik,j}^{(a,b)}(t)
:=
\int_{\mathbb R}
r_{i,j}(x)^a r_{k,j}(x)^b
\phi(x;m_{ij},v_{ij}(t))\,dx.
\end{equation*}

We now identify which of these moments are needed for
$\mathcal D(t,d)$ and $\mathcal R(t,d)$. In both cases only diagonal moments
are involved. By the binomial formula,
\begin{equation*}
(\beta_i r_i-1)^8
=
\sum_{q=0}^8
\binom{8}{q}
(-1)^{8-q}
\beta_i^q r_i^q,
\end{equation*}
and hence
\begin{equation}\label{eq:Di-expanded-resp}
D_i(t,d)
=
\sum_{q=0}^8
\binom{8}{q}
(-1)^{8-q}
\beta_i^q
\Psi_{ii}^{(q,0)}(t,d).
\end{equation}
Similarly,
\begin{equation}\label{eq:Ri-expanded-resp}
R_i(t,d)
=
\Psi_{ii}^{(0,-8)}(t,d).
\end{equation}
Therefore, to bound $\mathcal D(t,d)$ and $\mathcal R(t,d)$, it is enough to
control the one-coordinate factors $\psi_{ii,j}^{(q,0)}(t)$ for
$q=0,\dots,8$ and $\psi_{ii,j}^{(0,-8)}(t)$. The next lemma computes the more
general quantity $\psi_{ik,j}^{(a,b)}(t)$, which will cover all these cases at
once.

\begin{lemma}\label{lem:psi-one-coordinate-resp}
Fix $a,b\in\mathbb R$, $i,k\in I$, $j\ge1$, and $t\in[0,T]$. Let
\[
v_i:=v_{ij}(t),
\qquad
v_k:=v_{kj}(t),
\qquad
\widetilde v_i:=\widetilde v_{ij}(t),
\qquad
\widetilde v_k:=\widetilde v_{kj}(t),
\]
and
\[
m_i:=m_{ij},
\qquad
m_k:=m_{kj},
\qquad
\widetilde m_i:=\widetilde m_{ij},
\qquad
\widetilde m_k:=\widetilde m_{kj}.
\]
Assume that all four variances are positive. Define
\begin{equation*}
A_{ik,j}^{(a,b)}(t)
:=
\frac{1-a}{v_i}
+
\frac{a}{\widetilde v_i}
+
\frac{b}{\widetilde v_k}
-
\frac{b}{v_k},
\end{equation*}
\begin{equation*}
B_{ik,j}^{(a,b)}(t)
:=
\frac{(1-a)m_i}{v_i}
+
\frac{a\widetilde m_i}{\widetilde v_i}
+
\frac{b\widetilde m_k}{\widetilde v_k}
-
\frac{bm_k}{v_k},
\end{equation*}
and
\begin{equation*}
C_{ik,j}^{(a,b)}(t)
:=
\frac{(1-a)m_i^2}{v_i}
+
\frac{a\widetilde m_i^2}{\widetilde v_i}
+
\frac{b\widetilde m_k^2}{\widetilde v_k}
-
\frac{bm_k^2}{v_k}.
\end{equation*}
If $A_{ik,j}^{(a,b)}(t)>0$, then
\begin{equation}\label{eq:psi-explicit-resp}
\psi_{ik,j}^{(a,b)}(t)
=
\left(\frac{v_i}{\widetilde v_i}\right)^{a/2}
\left(\frac{v_k}{\widetilde v_k}\right)^{b/2}
\frac{1}{\sqrt{v_iA_{ik,j}^{(a,b)}(t)}}
\exp\left(
\frac{\big(B_{ik,j}^{(a,b)}(t)\big)^2}
{2A_{ik,j}^{(a,b)}(t)}
-
\frac12 C_{ik,j}^{(a,b)}(t)
\right).
\end{equation}
\end{lemma}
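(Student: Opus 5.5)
This is a direct Gaussian-integral computation. We write the integrand $r_{i,j}(x)^a r_{k,j}(x)^b\,\phi(x;m_i,v_i)$ as a single exponential of a quadratic in $x$, complete the square, and integrate.

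First, the prefactors. Each one-coordinate ratio is
\[
r_{i,j}(x)=\Big(\frac{v_i}{\widetilde v_i}\Big)^{1/2}\exp\Big(-\frac{(x-\widetilde m_i)^2}{2\widetilde v_i}+\frac{(x-m_i)^2}{2v_i}\Big),
\]
and similarly for $r_{k,j}$. Raising these to the powers $a$ and $b$ produces the constant $(v_i/\widetilde v_i)^{a/2}(v_k/\widetilde v_k)^{b/2}$. The base density $\phi(x;m_i,v_i)$ contributes $(2\pi v_i)^{-1/2}$ and the exponent $-(x-m_i)^2/(2v_i)$.

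Second, the exponent. Collecting all exponents gives $-\tfrac12 Q(x)$, where
\[
Q(x)=\frac{(1-a)(x-m_i)^2}{v_i}+\frac{a(x-\widetilde m_i)^2}{\widetilde v_i}+\frac{b(x-\widetilde m_k)^2}{\widetilde v_k}-\frac{b(x-m_k)^2}{v_k}.
\]
The signs come out as follows. The base density and the factor $r_{i,j}^a$ give $-(1-a)(x-m_i)^2/(2v_i)$ and $-a(x-\widetilde m_i)^2/(2\widetilde v_i)$. The factor $r_{k,j}^b$ gives $-b(x-\widetilde m_k)^2/(2\widetilde v_k)+b(x-m_k)^2/(2v_k)$. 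Expanding each square as $x^2/v-2xm/v+m^2/v$ shows that
\[
Q(x)=A x^2-2Bx+C,
\]
with $A=A^{(a,b)}_{ik,j}(t)$, $B=B^{(a,b)}_{ik,j}(t)$ and $C=C^{(a,b)}_{ik,j}(t)$ exactly as defined in the statement. This coefficient matching is the only bookkeeping step, and it is where a sign error would most likely slip in, so I will check the three coefficients term by term.

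Third, the integral. Completing the square gives
\[
Q(x)=A\,(x-B/A)^2+C-B^2/A.
\]
When $A>0$, the Gaussian integral evaluates to
\[
\int_{\mathbb R}e^{-\frac{A}{2}(x-B/A)^2}\,dx=\sqrt{2\pi/A}.
\]
Multiplying by $(2\pi v_i)^{-1/2}$ gives $1/\sqrt{v_iA}$. The remaining constant factor is $\exp\big(B^2/(2A)-C/2\big)$. Combining these with the prefactors from the first step yields \eqref{eq:psi-explicit-resp}.

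The assumption $A>0$ is exactly what makes the integral finite; if $A\le 0$ the integrand is not integrable. No other obstacle arises.
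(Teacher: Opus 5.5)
Your proposal is correct and follows the paper's proof essentially step for step. You write the integrand as $(2\pi v_i)^{-1/2}(v_i/\widetilde v_i)^{a/2}(v_k/\widetilde v_k)^{b/2}e^{-Q(x)/2}$ with $Q(x)=Ax^2-2Bx+C$, complete the square, and use $\int_{\mathbb R}e^{-\frac{A}{2}(x-B/A)^2}\,dx=\sqrt{2\pi/A}$ under $A>0$; your term-by-term sign bookkeeping for $A$, $B$ and $C$ checks out.
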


\begin{proof}
By definition,
\[
\psi_{ik,j}^{(a,b)}(t)
=
\int_{\mathbb R}
\left(
\frac{\phi(x;\widetilde m_i,\widetilde v_i)}
{\phi(x;m_i,v_i)}
\right)^a
\left(
\frac{\phi(x;\widetilde m_k,\widetilde v_k)}
{\phi(x;m_k,v_k)}
\right)^b
\phi(x;m_i,v_i)\,dx.
\]
Using the explicit Gaussian density,
\[
\frac{
\phi(x;\widetilde m_i,\widetilde v_i)^a
\phi(x;\widetilde m_k,\widetilde v_k)^b
\phi(x;m_i,v_i)
}{
\phi(x;m_i,v_i)^a
\phi(x;m_k,v_k)^b
}
=
(2\pi)^{-1/2}
\left(\frac{v_i}{\widetilde v_i}\right)^{a/2}
\left(\frac{v_k}{\widetilde v_k}\right)^{b/2}
v_i^{-1/2}
\exp\left(-\frac12 Q(x)\right),
\]
where
\[
Q(x)
=
A_{ik,j}^{(a,b)}(t)x^2
-
2B_{ik,j}^{(a,b)}(t)x
+
C_{ik,j}^{(a,b)}(t).
\]
Thus the integrability of this one-dimensional Gaussian-type integral is ensured
by the positivity condition $A_{ik,j}^{(a,b)}(t)>0$. Under this condition,
completing the square gives
\[
Q(x)
=
A_{ik,j}^{(a,b)}(t)
\left(
x-\frac{B_{ik,j}^{(a,b)}(t)}
{A_{ik,j}^{(a,b)}(t)}
\right)^2
-
\frac{\big(B_{ik,j}^{(a,b)}(t)\big)^2}
{A_{ik,j}^{(a,b)}(t)}
+
C_{ik,j}^{(a,b)}(t).
\]
Therefore
\begin{align*}
\psi_{ik,j}^{(a,b)}(t)
&=
(2\pi)^{-1/2}
\left(\frac{v_i}{\widetilde v_i}\right)^{a/2}
\left(\frac{v_k}{\widetilde v_k}\right)^{b/2}
v_i^{-1/2}
\\
&\quad\times
\exp\left(
\frac{\big(B_{ik,j}^{(a,b)}(t)\big)^2}
{2A_{ik,j}^{(a,b)}(t)}
-
\frac12 C_{ik,j}^{(a,b)}(t)
\right)
\\
&\quad\times
\int_{\mathbb R}
\exp\left(
-\frac12 A_{ik,j}^{(a,b)}(t)
\left(
x-\frac{B_{ik,j}^{(a,b)}(t)}
{A_{ik,j}^{(a,b)}(t)}
\right)^2
\right)\,dx.
\end{align*}
The Gaussian integral equals $\sqrt{2\pi/A_{ik,j}^{(a,b)}(t)}$, which gives
\eqref{eq:psi-explicit-resp}.
\end{proof}

We now specialize the lemma to the moments appearing in
\eqref{eq:Di-expanded-resp} and \eqref{eq:Ri-expanded-resp}. The required
exponents are
\[
(a,b)=(q,0),
\qquad
q=0,1,\dots,8,
\]
and
\[
(a,b)=(0,-8).
\]
Thus the relevant positivity conditions are
\begin{equation}\label{eq:A-positivity-q-resp}
A_{ii,j}^{(q,0)}(t)>0
\qquad
\text{for all }i,j,t,\quad q=0,1,\dots,8,
\end{equation}
and
\begin{equation}\label{eq:A-positivity-minus-eight-resp}
A_{ii,j}^{(0,-8)}(t)>0
\qquad
\text{for all }i,j,t.
\end{equation}

In the diagonal setting, these positivity conditions can be written
directly in terms of the covariance perturbations. Indeed, for $q=0,\dots,8$,
\[
A_{ii,j}^{(q,0)}(t)
=
\frac{
v_{ij}(t)+(1-q)\Delta\sigma_{ij}
}{
v_{ij}(t)\widetilde v_{ij}(t)
}.
\]
Thus, since $v_{ij}(t)>0$ and $\widetilde v_{ij}(t)>0$, the condition
$A_{ii,j}^{(q,0)}(t)>0$ is equivalent to
\[
v_{ij}(t)+(1-q)\Delta\sigma_{ij}>0.
\]
Similarly,
\[
A_{ii,j}^{(0,-8)}(t)
=
\frac{
v_{ij}(t)+9\Delta\sigma_{ij}
}{
v_{ij}(t)\widetilde v_{ij}(t)
},
\]
so $A_{ii,j}^{(0,-8)}(t)>0$ is equivalent to
\[
v_{ij}(t)+9\Delta\sigma_{ij}>0.
\]
Equivalently, with
\[
\alpha_{ij}(t)
:=
\frac{\Delta\sigma_{ij}}{v_{ij}(t)},
\]
the positivity conditions are
\[
1+\alpha_{ij}(t)>0,
\qquad
1+(1-q)\alpha_{ij}(t)>0
\quad\text{for }q=0,\dots,8,
\]
and
\[
1+9\alpha_{ij}(t)>0.
\]
Thus the required positivity can be enforced as follows: for the finitely many
low modes, one assumes the inequalities directly, while on the tail they follow
from
\[
|\alpha_{ij}(t)|\le \rho_{\mathrm{tail}},
\qquad
\rho_{\mathrm{tail}}\in(0,1/9).
\]

The product formula \eqref{eq:Psi-factorization-resp} shows why it is natural
to work with logarithms of the one-coordinate factors: uniform control of the
products follows from uniform control of the corresponding sums of logarithms.
For $i\in I$, $j\ge1$, $t\in[0,T]$, and $q=0,1,\dots,8$, define
\begin{equation*}
\Lambda_{ij}^{(q)}(t)
:=
\log\psi_{ii,j}^{(q,0)}(t),
\end{equation*}
and also
\begin{equation*}
\Lambda_{ij}^{(-8)}(t)
:=
\log\psi_{ii,j}^{(0,-8)}(t).
\end{equation*}
By Lemma~\ref{lem:psi-one-coordinate-resp}, for $q=0,1,\dots,8$,
\begin{equation}\label{eq:Lambda-q-explicit-resp}
\Lambda_{ij}^{(q)}(t)
=
\frac q2
\log\left(\frac{v_{ij}(t)}{\widetilde v_{ij}(t)}\right)
-
\frac12
\log\Big(v_{ij}(t)A_{ii,j}^{(q,0)}(t)\Big)
+
\frac{\big(B_{ii,j}^{(q,0)}(t)\big)^2}
{2A_{ii,j}^{(q,0)}(t)}
-
\frac12 C_{ii,j}^{(q,0)}(t),
\end{equation}
whereas
\begin{equation}\label{eq:Lambda-minus-eight-explicit-resp}
\Lambda_{ij}^{(-8)}(t)
=
-4
\log\left(\frac{v_{ij}(t)}{\widetilde v_{ij}(t)}\right)
-
\frac12
\log\Big(v_{ij}(t)A_{ii,j}^{(0,-8)}(t)\Big)
+
\frac{\big(B_{ii,j}^{(0,-8)}(t)\big)^2}
{2A_{ii,j}^{(0,-8)}(t)}
-
\frac12 C_{ii,j}^{(0,-8)}(t).
\end{equation}

The next corollary is the passage from one-coordinate control to uniform control
of $\mathcal D(t,d)$ and $\mathcal R(t,d)$.

\begin{corollary}
\label{cor:resp-density}
Assume that
\[
\beta^*:=\sup_{i\in I}\beta_i<\infty,
\]
that the positivity conditions \eqref{eq:A-positivity-q-resp} and
\eqref{eq:A-positivity-minus-eight-resp} hold, and that, for every
$q=0,1,\dots,8$,
\begin{equation}\label{eq:Lambda-q-sum-condition-resp}
\sup_{i\in I}\sup_{t\in[0,T]}\sup_{d\ge1}
\sum_{j=1}^d
\big|\Lambda_{ij}^{(q)}(t)\big|
<\infty,
\end{equation}
and
\begin{equation}\label{eq:Lambda-minus-eight-sum-condition-resp}
\sup_{i\in I}\sup_{t\in[0,T]}\sup_{d\ge1}
\sum_{j=1}^d
\big|\Lambda_{ij}^{(-8)}(t)\big|
<\infty.
\end{equation}
Then
\[
\sup_{t\in[0,T]}\sup_{d\ge1}\mathcal D(t,d)<\infty,
\qquad
\sup_{t\in[0,T]}\sup_{d\ge1}\mathcal R(t,d)<\infty.
\]
\end{corollary}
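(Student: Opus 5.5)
The plan is to combine the binomial expansion \eqref{eq:Di-expanded-resp}, the identity \eqref{eq:Ri-expanded-resp}, and the product formula \eqref{eq:Psi-factorization-resp}. These reduce both $\mathcal D(t,d)$ and $\mathcal R(t,d)$ to finitely many products of one-coordinate factors, and I will bound those products by exponentials of the log-sums in \eqref{eq:Lambda-q-sum-condition-resp}--\eqref{eq:Lambda-minus-eight-sum-condition-resp}.

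First, the positivity conditions \eqref{eq:A-positivity-q-resp} and \eqref{eq:A-positivity-minus-eight-resp} allow Lemma~\ref{lem:psi-one-coordinate-resp} to be applied. It shows that each factor $\psi_{ii,j}^{(q,0)}(t)$ and $\psi_{ii,j}^{(0,-8)}(t)$ is finite and strictly positive, so $\Lambda_{ij}^{(q)}(t)$ and $\Lambda_{ij}^{(-8)}(t)$ are well defined real numbers. Then, by \eqref{eq:Psi-factorization-resp},
\[
\Psi_{ii}^{(q,0)}(t,d)=\exp\Big(\sum_{j=1}^d\Lambda_{ij}^{(q)}(t)\Big)\le \exp\Big(\sum_{j=1}^d\big|\Lambda_{ij}^{(q)}(t)\big|\Big)\le e^{M_q},
\]
where $M_q$ is the supremum in \eqref{eq:Lambda-q-sum-condition-resp}. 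This bound is uniform in $i\in I$, $t\in[0,T]$ and $d\ge1$. In the same way, \eqref{eq:Lambda-minus-eight-sum-condition-resp} gives $R_i(t,d)=\Psi_{ii}^{(0,-8)}(t,d)\le e^{M_{-8}}$ uniformly.

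Next, I bound $D_i$ directly from \eqref{eq:Di-expanded-resp}. Taking absolute values term by term and using $\beta_i\le\beta^*$ gives
\[
D_i(t,d)\le\sum_{q=0}^8\binom{8}{q}(\beta^*)^q e^{M_q}\le (1+\beta^*)^8\,e^{\max_q M_q}.
\]
The right-hand side is independent of $i$, $t$ and $d$. (For $q=0$ we have $\Psi^{(0,0)}=1$ trivially.)

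Finally, since $(w_i)$ is a probability vector, averaging these uniform bounds gives $\mathcal D(t,d)\le (1+\beta^*)^8 e^{\max_q M_q}$ and $\mathcal R(t,d)\le e^{M_{-8}}$, and the claimed suprema over $t$ and $d$ follow. For countable $I$ the sums converge because each term is dominated by a constant times $w_i$, so no truncation argument is needed.

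There is no real obstacle here. The one point needing care is that the logs are genuine finite numbers, which requires positivity of $A$ on every coordinate for the integrals to converge; this is exactly what \eqref{eq:A-positivity-q-resp} and \eqref{eq:A-positivity-minus-eight-resp} provide. Note also that the uniformity in $i$ built into the hypotheses is what makes the $w_i$-average finite.
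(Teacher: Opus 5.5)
Your proposal is correct and follows the paper's proof essentially step for step. Both arguments write $\Psi_{ii}^{(q,0)}$ and $R_i$ as exponentials of the log-sums via the factorization, bound these by $e^{C_q}$ using the summability hypotheses, bound $D_i$ through the binomial expansion with $\beta_i\le\beta^*$, and then average over the weights $w_i$; your extra remarks on the logarithms being well defined and on countable $I$ are sound additions.
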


\begin{proof}
For each $i\in I$, $t\in[0,T]$, and $d\ge1$, define
\[
L_i^{(q)}(t,d)
:=
\sum_{j=1}^d\Lambda_{ij}^{(q)}(t),
\qquad
q=0,1,\dots,8,
\]
and
\[
L_i^{(-8)}(t,d)
:=
\sum_{j=1}^d\Lambda_{ij}^{(-8)}(t).
\]
By the factorization formula \eqref{eq:Psi-factorization-resp},
\[
\Psi_{ii}^{(q,0)}(t,d)
=
\exp\big(L_i^{(q)}(t,d)\big),
\qquad
q=0,1,\dots,8,
\]
and
\[
R_i(t,d)
=
\Psi_{ii}^{(0,-8)}(t,d)
=
\exp\big(L_i^{(-8)}(t,d)\big).
\]
The assumptions imply that, for each $q=0,1,\dots,8$, there exists
$C_q<\infty$ such that
\[
|L_i^{(q)}(t,d)|\le C_q
\qquad
\text{for all }i,t,d.
\]
Hence
\[
\Psi_{ii}^{(q,0)}(t,d)\le e^{C_q}.
\]
Similarly, there exists $C_{-8}<\infty$ such that
\[
R_i(t,d)\le e^{C_{-8}}
\qquad
\text{for all }i,t,d.
\]
Using \eqref{eq:Di-expanded-resp},
\[
|D_i(t,d)|
\le
\sum_{q=0}^8
\binom{8}{q}
\beta_i^q
\Psi_{ii}^{(q,0)}(t,d)
\le
\sum_{q=0}^8
\binom{8}{q}
(\beta^*)^q e^{C_q}.
\]
Therefore
\[
\mathcal D(t,d)
=
\sum_{i\in I}w_iD_i(t,d)
\le
\sum_{q=0}^8
\binom{8}{q}
(\beta^*)^q e^{C_q}
<\infty,
\]
uniformly in $t$ and $d$. Likewise,
\[
\mathcal R(t,d)
=
\sum_{i\in I}w_iR_i(t,d)
\le
e^{C_{-8}}\sum_{i\in I}w_i
=
e^{C_{-8}}.
\]
\end{proof}

It remains to give conditions under which the sums in
\eqref{eq:Lambda-q-sum-condition-resp} and
\eqref{eq:Lambda-minus-eight-sum-condition-resp} are finite uniformly in
$i$, $t$, and $d$. The previous formulas show that the relevant quantities
depend on the covariance and mean perturbations through normalized ratios. Set
\[
S:=\{-8,0,1,\dots,8\}.
\]
For $i\in I$, $j\ge1$, and $t\in[0,T]$, define
\begin{equation*}
\alpha_{ij}(t)
:=
\frac{\Delta\sigma_{ij}}{v_{ij}(t)},
\qquad
\zeta_{ij}(t)
:=
\frac{\Delta m_{ij}}{\sqrt{v_{ij}(t)}}.
\end{equation*}
For $s\in S$, define
\begin{equation*}
F_s(\alpha,\zeta)
:=
-\frac{s-1}{2}\log(1+\alpha)
-\frac12\log\bigl(1+(1-s)\alpha\bigr)
+
\frac{s(s-1)}{2}
\frac{\zeta^2}{1+(1-s)\alpha},
\end{equation*}
on the domain
\[
\mathcal U_s
:=
\left\{
(\alpha,\zeta)\in\mathbb R^2:
1+\alpha>0,\quad
1+(1-s)\alpha>0
\right\}.
\]

\begin{proposition}\label{prop:resp-density-log}
For every $q=0,1,\dots,8$,
\begin{equation}\label{eq:Lambda-Fs-q-resp}
\Lambda_{ij}^{(q)}(t)
=
F_q\bigl(\alpha_{ij}(t),\zeta_{ij}(t)\bigr),
\end{equation}
and
\begin{equation}\label{eq:Lambda-Fs-minus-eight-resp}
\Lambda_{ij}^{(-8)}(t)
=
F_{-8}\bigl(\alpha_{ij}(t),\zeta_{ij}(t)\bigr).
\end{equation}

Assume that there exist $J\in\mathbb N$, $\rho_{\mathrm{tail}}\in(0,1/9)$,
and $C_{\mathrm{tail}}<\infty$ such that the following conditions hold.

For the low modes, the logarithmic factors are well defined and uniformly
bounded:
\begin{equation*}
C_{\mathrm{low}}
:=
\sup_{i\in I}\sup_{t\in[0,T]}
\sum_{j=1}^{J-1}
\max_{s\in S}
\left|
F_s\bigl(\alpha_{ij}(t),\zeta_{ij}(t)\bigr)
\right|
<\infty.
\end{equation*}
Here well defined means that
\[
\bigl(\alpha_{ij}(t),\zeta_{ij}(t)\bigr)\in\mathcal U_s
\qquad
\text{for all }i\in I,\quad j=1,\dots,J-1,\quad t\in[0,T],
\quad s\in S.
\]

On the tail, the normalized perturbations are uniformly small:
\begin{equation}\label{eq:tail-smallness-resp}
|\alpha_{ij}(t)|\le \rho_{\mathrm{tail}},
\qquad
|\zeta_{ij}(t)|\le \rho_{\mathrm{tail}},
\qquad
\text{for all }i\in I,\ j\ge J,\ t\in[0,T].
\end{equation}

Finally, the normalized tail perturbations are uniformly summable:
\begin{equation*}
\sup_{i\in I}\sup_{t\in[0,T]}\sup_{d\ge1}
\sum_{j=J}^{d}
\left(
\alpha_{ij}(t)^2+\zeta_{ij}(t)^2
\right)
\le
C_{\mathrm{tail}}.
\end{equation*}
Then the positivity conditions \eqref{eq:A-positivity-q-resp} and
\eqref{eq:A-positivity-minus-eight-resp} hold, and, for every
$q=0,1,\dots,8$,
\[
\sup_{i\in I}\sup_{t\in[0,T]}\sup_{d\ge1}
\sum_{j=1}^{d}
\big|\Lambda_{ij}^{(q)}(t)\big|
<\infty,
\]
and also
\[
\sup_{i\in I}\sup_{t\in[0,T]}\sup_{d\ge1}
\sum_{j=1}^{d}
\big|\Lambda_{ij}^{(-8)}(t)\big|
<\infty.
\]
Consequently, if $\sup_i\beta_i<\infty$, then the conclusion of
Corollary~\ref{cor:resp-density} holds.
\end{proposition}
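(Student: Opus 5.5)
Substituting the explicit one-coordinate formula of Lemma~\ref{lem:psi-one-coordinate-resp} into \eqref{eq:Lambda-q-explicit-resp} and \eqref{eq:Lambda-minus-eight-explicit-resp} gives the identities \eqref{eq:Lambda-Fs-q-resp}--\eqref{eq:Lambda-Fs-minus-eight-resp}. Writing the diagonal case $i=k$ with $(a,b)=(s,0)$ (and $(0,-8)$ handled identically, since the $b$-terms reduce to the $a$-terms with $a=-8$ when $i=k$), I set $v=v_{ij}(t)$, $\widetilde v=v(1+\alpha)$ and $\widetilde m=m+\zeta\sqrt v$. Then $vA=(1+(1-s)\alpha)/(1+\alpha)$. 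The prefactor contributes $-\tfrac s2\log(1+\alpha)$, and the $-\tfrac12\log(vA)$ term contributes $-\tfrac12\log(1+(1-s)\alpha)+\tfrac12\log(1+\alpha)$; together these give the two logarithms in $F_s$. For the exponent, translation invariance lets me take $m=0$. Then $B=s\zeta/(\sqrt v(1+\alpha))$ and $C=s\zeta^2/(1+\alpha)$, so
\[
\frac{B^2}{2A}-\frac C2=\frac{s\zeta^2}{2(1+\alpha)}\Big(\frac{s}{1+(1-s)\alpha}-1\Big)=\frac{s(s-1)}{2}\frac{\zeta^2}{1+(1-s)\alpha},
\]
where the last step uses $s-1-(1-s)\alpha=(s-1)(1+\alpha)$. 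This is a routine check.

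The positivity conditions reduce to $(\alpha_{ij}(t),\zeta_{ij}(t))\in\mathcal U_s$ for all $s\in S$. For $j<J$ this is part of the low-mode hypothesis. For $j\ge J$, the bound $|\alpha|\le\rho_{\mathrm{tail}}<1/9$ gives $1+(1-s)\alpha\ge 1-9|\alpha|>0$ for every $s\in S$, since $|1-s|\le 9$ on $S$; it also gives $1+\alpha>0$.

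For the summability, I split $\sum_{j=1}^d|\Lambda_{ij}^{(s)}|$ into the part $j<J$, which is bounded by $C_{\mathrm{low}}$ uniformly in $i$ and $t$, and the tail part $j\ge J$. The key step is a quadratic bound on the compact set $K=\{|\alpha|\le\rho_{\mathrm{tail}},\ |\zeta|\le\rho_{\mathrm{tail}}\}$. On $K$, each $F_s$ is smooth, since the arguments of the logarithms are at least $1-9\rho_{\mathrm{tail}}>0$. Moreover $F_s(0,0)=0$ and $\partial_\zeta F_s(0,0)=0$. The coefficient of $\alpha$ at the origin is
\[
-\frac{s-1}{2}+\frac{s-1}{2}=0,
\]
so $\nabla F_s(0,0)=0$. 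Taylor's theorem with a uniform bound on the Hessian over the convex set $K$ then gives $|F_s(\alpha,\zeta)|\le M(\alpha^2+\zeta^2)$, where $M$ depends only on $\rho_{\mathrm{tail}}$ and is taken as the maximum over the finitely many $s\in S$.

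Summing over $j\ge J$ yields the tail bound $MC_{\mathrm{tail}}$, uniformly in $i$, $t$ and $d$. The total is therefore at most $C_{\mathrm{low}}+MC_{\mathrm{tail}}$, which establishes \eqref{eq:Lambda-q-sum-condition-resp} and \eqref{eq:Lambda-minus-eight-sum-condition-resp}. Corollary~\ref{cor:resp-density} then applies once $\sup_i\beta_i<\infty$.

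The main point to get right is this cancellation of the linear term in $\alpha$. Without it, the tail would only be controlled by $\sum_j|\alpha_{ij}|$, and the square-summability assumption would not suffice. I would verify the cancellation carefully, including for $s=-8$, where the first logarithm appears with coefficient $\tfrac92$ and the second with argument $1+9\alpha$, so that the linear coefficient is $-\tfrac92\cdot(-1)\cdots$; directly, $-\tfrac{-9}{2}-\tfrac92=0$.
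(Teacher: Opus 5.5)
Your proof is correct and follows the paper's argument: the same reduction of Lemma~\ref{lem:psi-one-coordinate-resp} to $F_s$, positivity on the tail from $|1-s|\le 9$ on $S$, and the split into low modes and tail, with a second-order Taylor bound on the tail that relies on $F_s(0,0)=0$ and $\nabla F_s(0,0)=0$. Your two shortcuts are both valid and simply spell out steps the paper calls a ``direct simplification'' and ``identical'': centering at $m_{ij}=0$ by translation invariance, and treating $(0,-8)$ as exponent $-8$ because $r_i^a r_i^b=r_i^{a+b}$ when $i=k$.
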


\begin{proof}
Fix $i\in I$, $j\ge1$, and $t\in[0,T]$. Write
\[
v:=v_{ij}(t),
\qquad
\eta:=\Delta\sigma_{ij},
\qquad
\delta:=\Delta m_{ij},
\qquad
\widetilde v:=v+\eta.
\]
We first treat $q=0,1,\dots,8$. In the diagonal case,
\[
A_{ii,j}^{(q,0)}(t)
=
\frac{1-q}{v}
+
\frac{q}{\widetilde v}
=
\frac{v+(1-q)\eta}{v\widetilde v}.
\]
Therefore
\[
vA_{ii,j}^{(q,0)}(t)
=
\frac{v+(1-q)\eta}{\widetilde v}
=
\frac{1+(1-q)\alpha_{ij}(t)}
{1+\alpha_{ij}(t)}.
\]
Moreover,
\[
B_{ii,j}^{(q,0)}(t)
=
\frac{(1-q)m_{ij}}{v}
+
\frac{q(m_{ij}+\delta)}{\widetilde v},
\]
and
\[
C_{ii,j}^{(q,0)}(t)
=
\frac{(1-q)m_{ij}^2}{v}
+
\frac{q(m_{ij}+\delta)^2}{\widetilde v}.
\]
A direct simplification gives
\[
\frac{\big(B_{ii,j}^{(q,0)}(t)\big)^2}
{2A_{ii,j}^{(q,0)}(t)}
-
\frac12 C_{ii,j}^{(q,0)}(t)
=
\frac{q(q-1)\delta^2}
{2\bigl(v+(1-q)\eta\bigr)}.
\]
Since $\delta^2=v\zeta_{ij}(t)^2$ and
\[
v+(1-q)\eta
=
v\bigl(1+(1-q)\alpha_{ij}(t)\bigr),
\]
we obtain
\[
\frac{\big(B_{ii,j}^{(q,0)}(t)\big)^2}
{2A_{ii,j}^{(q,0)}(t)}
-
\frac12 C_{ii,j}^{(q,0)}(t)
=
\frac{q(q-1)}{2}
\frac{\zeta_{ij}(t)^2}
{1+(1-q)\alpha_{ij}(t)}.
\]
Substituting into \eqref{eq:Lambda-q-explicit-resp} gives
\eqref{eq:Lambda-Fs-q-resp}.

The case $s=-8$ is identical, using
\eqref{eq:Lambda-minus-eight-explicit-resp}. In that case
\[
A_{ii,j}^{(0,-8)}(t)
=
\frac{1}{v}-\frac{8}{\widetilde v}+\frac{8}{v}
=
\frac{v+9\eta}{v\widetilde v},
\]
so
\[
vA_{ii,j}^{(0,-8)}(t)
=
\frac{1+9\alpha_{ij}(t)}
{1+\alpha_{ij}(t)}.
\]
The same simplification gives
\[
\frac{\big(B_{ii,j}^{(0,-8)}(t)\big)^2}
{2A_{ii,j}^{(0,-8)}(t)}
-
\frac12 C_{ii,j}^{(0,-8)}(t)
=
\frac{(-8)(-9)}{2}
\frac{\zeta_{ij}(t)^2}
{1+9\alpha_{ij}(t)}.
\]
Substituting into \eqref{eq:Lambda-minus-eight-explicit-resp} gives
\eqref{eq:Lambda-Fs-minus-eight-resp}.

We now prove that the stated assumptions imply both the positivity conditions
and the logarithmic summability bounds. On the low modes, the condition
$(\alpha_{ij}(t),\zeta_{ij}(t))\in\mathcal U_s$ for every $s\in S$ gives
\[
1+\alpha_{ij}(t)>0,
\qquad
1+(1-s)\alpha_{ij}(t)>0,
\qquad s\in S.
\]
This is exactly the positivity needed for
\eqref{eq:A-positivity-q-resp} and
\eqref{eq:A-positivity-minus-eight-resp}. On the tail, since
$\rho_{\mathrm{tail}}<1/9$, \eqref{eq:tail-smallness-resp} implies, for every
$s\in S$,
\[
1+\alpha_{ij}(t)\ge1-\rho_{\mathrm{tail}}>0,
\qquad
1+(1-s)\alpha_{ij}(t)\ge1-9\rho_{\mathrm{tail}}>0.
\]
Thus the positivity conditions hold for all modes.

It remains to prove the uniform bounds on the sums of
$|\Lambda_{ij}^{(q)}(t)|$ and $|\Lambda_{ij}^{(-8)}(t)|$. On the tail, the same
inequalities show that the functions $F_s$ are smooth on a neighborhood of the
compact rectangle
\[
[-\rho_{\mathrm{tail}},\rho_{\mathrm{tail}}]
\times
[-\rho_{\mathrm{tail}},\rho_{\mathrm{tail}}].
\]
Moreover,
\[
F_s(0,0)=0,
\qquad
\nabla F_s(0,0)=0.
\]
Indeed, the derivative in $\zeta$ is proportional to $\zeta$, while the
derivative in $\alpha$ at $(0,0)$ is
\[
-\frac{s-1}{2}-\frac{1-s}{2}=0.
\]
Taylor's theorem therefore gives a constant
$C_s(\rho_{\mathrm{tail}})<\infty$ such that
\[
|F_s(\alpha,\zeta)|
\le
C_s(\rho_{\mathrm{tail}})
(\alpha^2+\zeta^2)
\]
on this rectangle. Since $S$ is finite, define
\[
C_{\rho_{\mathrm{tail}}}
:=
\max_{s\in S}C_s(\rho_{\mathrm{tail}})<\infty.
\]
Then, for $j\ge J$,
\[
|\Lambda_{ij}^{(q)}(t)|
\le
C_{\rho_{\mathrm{tail}}}
\bigl(\alpha_{ij}(t)^2+\zeta_{ij}(t)^2\bigr),
\qquad
q=0,1,\dots,8,
\]
and similarly
\[
|\Lambda_{ij}^{(-8)}(t)|
\le
C_{\rho_{\mathrm{tail}}}
\bigl(\alpha_{ij}(t)^2+\zeta_{ij}(t)^2\bigr).
\]

Fix $q=0,1,\dots,8$. For every $i,t,d$,
\[
\sum_{j=1}^{d}|\Lambda_{ij}^{(q)}(t)|
\le
\sum_{j=1}^{\min\{d,J-1\}}|\Lambda_{ij}^{(q)}(t)|
+
\sum_{j=J}^{d}|\Lambda_{ij}^{(q)}(t)|.
\]
The first term is bounded by $C_{\mathrm{low}}$, and the second is bounded by
\[
C_{\rho_{\mathrm{tail}}}
\sum_{j=J}^{d}
\bigl(\alpha_{ij}(t)^2+\zeta_{ij}(t)^2\bigr)
\le
C_{\rho_{\mathrm{tail}}}C_{\mathrm{tail}}.
\]
This proves the required bound for $\Lambda_{ij}^{(q)}$. The proof for
$\Lambda_{ij}^{(-8)}$ is identical.
\end{proof}

The preceding argument keeps the cancellation at zero perturbation explicit.
Indeed, if
\[
L_i^{(q)}(t,d)
:=
\sum_{j=1}^d\Lambda_{ij}^{(q)}(t),
\]
then
\[
D_i(t,d)
=
\sum_{q=0}^8
\binom{8}{q}
(-1)^{8-q}
\beta_i^q e^{L_i^{(q)}(t,d)}.
\]
Since
\[
\sum_{q=0}^8
\binom{8}{q}
(-1)^{8-q}=0,
\]
we may also write
\[
D_i(t,d)
=
\sum_{q=0}^8
\binom{8}{q}
(-1)^{8-q}
\left(
\beta_i^q e^{L_i^{(q)}(t,d)}-1
\right).
\]
Thus $D_i(t,d)$ tends to zero uniformly along any perturbative regime in which
$\beta_i\to1$ uniformly and $L_i^{(q)}(t,d)\to0$ uniformly for
$q=0,\dots,8$.

\paragraph{Controlling the pairwise variation of the perturbed component scores.}
It remains to give a condition ensuring that
$\mathcal S_{\mathrm{pair}}(t,d)$ is bounded uniformly in $d\ge1$ and
$t\in[0,T]$. Recall from \eqref{eq:Spair-def-resp} that this quantity is built
from the pairwise differences
\[
\widetilde S_{i,t}^d-\widetilde S_{\ell,t}^d,
\]
with each coordinate weighted by the preconditioner spectrum $(\gamma_j)$. The next proposition gives a simpler sufficient condition by bounding
$H_{i\ell,j}(t)$, the square root of the fourth moment, under $X\sim\varphi_i$, of the $j$-th coordinate
of $\widetilde S_{i,t}^d-\widetilde S_{\ell,t}^d$.

\begin{proposition}\label{prop:resp-score}
Assume that
\begin{equation}\label{eq:resp-score-condition}
\sup_{t\in[0,T]}\sup_{d\ge1}
\sum_{i,\ell\in I}w_iw_\ell
\left(
\sum_{j=1}^d
\gamma_j
\bigl(
a_{i\ell,j}(t)^2v_{ij}(t)
+
\mu_{i\ell,j}(t)^2
\bigr)
\right)^2
<\infty.
\end{equation}
Then
\begin{equation}\label{eq:Spair-uniform-conclusion}
\sup_{t\in[0,T]}\sup_{d\ge1}
\mathcal S_{\mathrm{pair}}(t,d)
<\infty.
\end{equation}
\end{proposition}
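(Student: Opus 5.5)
The plan is to compare $H_{i\ell,j}(t)$ termwise with the quantity $a_{i\ell,j}(t)^2v_{ij}(t)+\mu_{i\ell,j}(t)^2$ that appears in \eqref{eq:resp-score-condition}, and then substitute into the definition \eqref{eq:Spair-def-resp} of $\mathcal S_{\mathrm{pair}}(t,d)$.

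Fix $i,\ell,j,t$ and write $\alpha:=a_{i\ell,j}(t)^2v_{ij}(t)\ge0$ and $\beta:=\mu_{i\ell,j}(t)^2\ge0$. By \eqref{eq:H-def-resp},
\[
H_{i\ell,j}(t)^2=3\alpha^2+6\alpha\beta+\beta^2 .
\]
Since $3\alpha^2+6\alpha\beta+\beta^2\le 3(\alpha+\beta)^2$, we obtain the pointwise bound
\[
H_{i\ell,j}(t)\le\sqrt3\,\bigl(a_{i\ell,j}(t)^2v_{ij}(t)+\mu_{i\ell,j}(t)^2\bigr).
\]
This bound is just the Gaussian hypercontractivity estimate $\|Z\|_{L^4}^2\le\sqrt3\,\|Z\|_{L^2}^2$ for $Z=-a_{i\ell,j}(t)(X_j-m_{ij})+\mu_{i\ell,j}(t)$ under $X\sim\varphi_i$. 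Its second moment is exactly $\alpha+\beta$.

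Next we multiply by $\gamma_j>0$ and sum over $j=1,\dots,d$:
\[
\sum_{j=1}^d\gamma_jH_{i\ell,j}(t)\le\sqrt3\sum_{j=1}^d\gamma_j\bigl(a_{i\ell,j}(t)^2v_{ij}(t)+\mu_{i\ell,j}(t)^2\bigr).
\]
Squaring preserves the inequality because both sides are nonnegative. We then weight by $w_iw_\ell$, sum over $i,\ell\in I$, and use \eqref{eq:Spair-def-resp}. This gives
\[
\mathcal S_{\mathrm{pair}}(t,d)\le 3\sum_{i,\ell\in I}w_iw_\ell\Bigl(\sum_{j=1}^d\gamma_j\bigl(a_{i\ell,j}(t)^2v_{ij}(t)+\mu_{i\ell,j}(t)^2\bigr)\Bigr)^2 .
\]
Taking the supremum over $t\in[0,T]$ and $d\ge1$ and invoking \eqref{eq:resp-score-condition} yields \eqref{eq:Spair-uniform-conclusion}.

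There is no real obstacle; the only care needed is the elementary inequality $3\alpha^2+6\alpha\beta+\beta^2\le3(\alpha+\beta)^2$, which holds since the difference is $2\beta^2\ge0$. For countable $I$, every term is nonnegative, so the comparison passes through the infinite sums by monotone convergence.
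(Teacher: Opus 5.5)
Your proof is correct and matches the paper's own argument: it uses the same pointwise bound $H_{i\ell,j}(t)^2=3A^2+6AB+B^2\le 3(A+B)^2$, substitutes it into the definition of $\mathcal S_{\mathrm{pair}}(t,d)$, and obtains the factor $3$ in front of the assumed condition. One small correction to your aside, which the proof does not use: Gaussian hypercontractivity only gives $\|Z\|_{L^4}^2\le 3\|Z\|_{L^2}^2$, whereas the sharper $\sqrt3$ constant you state comes from the direct moment computation $\mathbb E Z^4=3\sigma^4+6\sigma^2\mu^2+\mu^4\le 3(\mathbb E Z^2)^2$.
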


\begin{proof}
 Fix $i,\ell,j,t$ and set
\[
A:=a_{i\ell,j}(t)^2v_{ij}(t),
\qquad
B:=\mu_{i\ell,j}(t)^2.
\]
By the definition of $H_{i\ell,j}(t)$ in \eqref{eq:H-def-resp},
\[
H_{i\ell,j}(t)^2
=
3A^2+6AB+B^2
\le
3(A+B)^2.
\]
Therefore
\[
H_{i\ell,j}(t)
\le
\sqrt3
\bigl(
a_{i\ell,j}(t)^2v_{ij}(t)
+
\mu_{i\ell,j}(t)^2
\bigr).
\]
Substituting this bound into the definition of
$\mathcal S_{\mathrm{pair}}(t,d)$ gives
\[
\mathcal S_{\mathrm{pair}}(t,d)
\le
3
\sum_{i,\ell\in I}w_iw_\ell
\left(
\sum_{j=1}^d
\gamma_j
\bigl(
a_{i\ell,j}(t)^2v_{ij}(t)
+
\mu_{i\ell,j}(t)^2
\bigr)
\right)^2.
\]
Thus \eqref{eq:resp-score-condition} implies
\eqref{eq:Spair-uniform-conclusion}.
\end{proof}

Let us now rewrite the condition in the original variables. By
\eqref{eq:a-def-resp},
\[
a_{i\ell,j}(t)^2v_{ij}(t)
=
\frac{
\big((\sigma_{\ell j}-\sigma_{ij})
+
(\Delta\sigma_{\ell j}-\Delta\sigma_{ij})\big)^2
(\sigma_{ij}+\kappa_t\lambda_j)
}{
(\sigma_{ij}+\Delta\sigma_{ij}+\kappa_t\lambda_j)^2
(\sigma_{\ell j}+\Delta\sigma_{\ell j}+\kappa_t\lambda_j)^2
},
\]
while \eqref{eq:mu-def-resp} gives
\[
\mu_{i\ell,j}(t)
=
\frac{\Delta m_{ij}}
{\sigma_{ij}+\Delta\sigma_{ij}+\kappa_t\lambda_j}
-
\frac{m_{\ell j}-m_{ij}+\Delta m_{\ell j}}
{\sigma_{\ell j}+\Delta\sigma_{\ell j}+\kappa_t\lambda_j}.
\]
Hence \eqref{eq:resp-score-condition} controls the pairwise differences between
the perturbed component scores through the preconditioner-weighted quantities
\[
\gamma_j
\frac{
\big((\sigma_{\ell j}-\sigma_{ij})
+
(\Delta\sigma_{\ell j}-\Delta\sigma_{ij})\big)^2
(\sigma_{ij}+\kappa_t\lambda_j)
}{
(\sigma_{ij}+\Delta\sigma_{ij}+\kappa_t\lambda_j)^2
(\sigma_{\ell j}+\Delta\sigma_{\ell j}+\kappa_t\lambda_j)^2
}
\]
and
\[
\gamma_j
\left(
\frac{\Delta m_{ij}}
{\sigma_{ij}+\Delta\sigma_{ij}+\kappa_t\lambda_j}
-
\frac{m_{\ell j}-m_{ij}+\Delta m_{\ell j}}
{\sigma_{\ell j}+\Delta\sigma_{\ell j}+\kappa_t\lambda_j}
\right)^2.
\]
This is the point at which the preconditioner enters the responsibility mismatch:
sufficient decay of $(\gamma_j)$ prevents the pairwise component-score
differences from accumulating over the tail coordinates.

\paragraph{Concrete sufficient conditions for the responsibility term.}
We now collect the previous estimates into a single set of sufficient conditions
written directly in terms of the coefficient arrays. The purpose is to make
explicit how the two parts of the responsibility bound are controlled. The
quantities $\mathcal D(t,d)$ and $\mathcal R(t,d)$ control how much the
responsibilities change; they are governed by relative covariance perturbations
and normalized mean perturbations. The quantity $\mathcal S_{\mathrm{pair}}(t,d)$
controls the pairwise variation of the perturbed component scores; this is where
the preconditioner spectrum $(\gamma_j)$ enters.

\begin{proposition}
\label{prop:resp-concrete}
Write
\[
v_{ij}(t):=\sigma_{ij}+\kappa_t\lambda_j,
\qquad
\widetilde v_{ij}(t):=\sigma_{ij}+\Delta\sigma_{ij}+\kappa_t\lambda_j,
\qquad
\beta_i:=\frac{\widetilde w_i}{w_i}
=
\frac{w_i+\Delta w_i}{w_i}.
\]
Assume first that the weight perturbation is uniformly controlled:
\begin{equation}\label{eq:resp-concrete-weight}
0<\inf_{i\in I}\beta_i
\le
\sup_{i\in I}\beta_i
<\infty.
\end{equation}

Assume next that there exist $J\in\mathbb N$ and
$\rho_{\mathrm{tail}}\in(0,1/9)$ such that the following conditions hold.

For each low mode $j=1,\dots,J-1$, there exist constants
\[
c_j>0,
\qquad
C_j<\infty,
\qquad
M_j<\infty
\]
such that, for all $i\in I$ and all $t\in[0,T]$,
\begin{equation}\label{eq:resp-concrete-low-v}
c_j
\le
\sigma_{ij}+\kappa_t\lambda_j
\le
C_j,
\end{equation}
\begin{equation}\label{eq:resp-concrete-low-vtilde}
c_j
\le
\sigma_{ij}+\Delta\sigma_{ij}+\kappa_t\lambda_j
\le
C_j,
\end{equation}
\begin{equation}\label{eq:resp-concrete-low-m}
|\Delta m_{ij}|\le M_j,
\end{equation}
and, for every $s\in\{-8,0,1,\dots,8\}$,
\begin{equation}\label{eq:resp-concrete-low-positive}
\sigma_{ij}+\kappa_t\lambda_j+(1-s)\Delta\sigma_{ij}
\ge
c_j.
\end{equation}

For every $i\in I$, every $j\ge J$, and every $t\in[0,T]$,
\begin{equation}\label{eq:resp-concrete-tail-small}
|\Delta\sigma_{ij}|
\le
\rho_{\mathrm{tail}}
(\sigma_{ij}+\kappa_t\lambda_j),
\qquad
|\Delta m_{ij}|
\le
\rho_{\mathrm{tail}}
\sqrt{\sigma_{ij}+\kappa_t\lambda_j}.
\end{equation}
Finally, the normalized tail perturbations are uniformly summable:
\begin{equation}\label{eq:resp-concrete-tail-sum}
\sup_{i\in I}\sup_{t\in[0,T]}\sup_{d\ge1}
\sum_{j=J}^{d}
\left(
\frac{(\Delta\sigma_{ij})^2}
{(\sigma_{ij}+\kappa_t\lambda_j)^2}
+
\frac{(\Delta m_{ij})^2}
{\sigma_{ij}+\kappa_t\lambda_j}
\right)
<\infty.
\end{equation}

Assume, in addition, that the pairwise variation of the perturbed component
scores satisfies
\begin{align}
\label{eq:resp-concrete-score}
\sup_{t\in[0,T]}\sup_{d\ge1}
\sum_{i,\ell\in I}w_iw_\ell
\left(
\sum_{j=1}^{d}\gamma_j
\left[
\frac{
\big((\sigma_{\ell j}-\sigma_{ij})
+
(\Delta\sigma_{\ell j}-\Delta\sigma_{ij})\big)^2
(\sigma_{ij}+\kappa_t\lambda_j)
}{
(\sigma_{ij}+\Delta\sigma_{ij}+\kappa_t\lambda_j)^2
(\sigma_{\ell j}+\Delta\sigma_{\ell j}+\kappa_t\lambda_j)^2
}
\right.\right.
\\[-0.2em]
\left.\left.
+
\left(
\frac{\Delta m_{ij}}
{\sigma_{ij}+\Delta\sigma_{ij}+\kappa_t\lambda_j}
-
\frac{m_{\ell j}-m_{ij}+\Delta m_{\ell j}}
{\sigma_{\ell j}+\Delta\sigma_{\ell j}+\kappa_t\lambda_j}
\right)^2
\right]
\right)^2
<\infty .
\nonumber
\end{align}
Then
\[
\sup_{t\in[0,T]}\sup_{d\ge1}
\mathbb E_{\rho_t^d}
\left[
\big\|(\Gamma^d)^{1/2}I_2(X,t)\big\|^2
\right]
<\infty.
\]
\end{proposition}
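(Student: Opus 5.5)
The proof assembles earlier results, with Corollary~\ref{cor:resp-structural} as the final step. That corollary requires three things: $\beta_*>0$, uniform bounds on $\mathcal D(t,d)$ and $\mathcal R(t,d)$, and a uniform bound on $\mathcal S_{\mathrm{pair}}(t,d)$. The first follows directly from \eqref{eq:resp-concrete-weight}, which also gives $\beta^*=\sup_i\beta_i<\infty$, as needed in Corollary~\ref{cor:resp-density}. For the third, observe that the left-hand side of \eqref{eq:resp-concrete-score} is exactly the quantity in \eqref{eq:resp-score-condition}. To see this, substitute the explicit expressions for $a_{i\ell,j}(t)^2v_{ij}(t)$ and $\mu_{i\ell,j}(t)$ derived from \eqref{eq:a-def-resp} and \eqref{eq:mu-def-resp}. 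Proposition~\ref{prop:resp-score} then gives $\sup_{t,d}\mathcal S_{\mathrm{pair}}(t,d)<\infty$.

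The main work is to check the hypotheses of Proposition~\ref{prop:resp-density-log} with $\alpha_{ij}(t)=\Delta\sigma_{ij}/v_{ij}(t)$ and $\zeta_{ij}(t)=\Delta m_{ij}/\sqrt{v_{ij}(t)}$. The tail conditions are immediate. Dividing \eqref{eq:resp-concrete-tail-small} by $v_{ij}(t)$ and by $\sqrt{v_{ij}(t)}$ gives $|\alpha_{ij}|,|\zeta_{ij}|\le\rho_{\mathrm{tail}}$ for $j\ge J$. Likewise, \eqref{eq:resp-concrete-tail-sum} is literally $\sup\sum_{j\ge J}(\alpha_{ij}^2+\zeta_{ij}^2)\le C_{\mathrm{tail}}$.

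For the low modes $j<J$, two things must be shown: $(\alpha_{ij},\zeta_{ij})\in\mathcal U_s$ for every $s\in S$, and $F_s$ is bounded uniformly in $i$ and $t$. Membership in $\mathcal U_s$ comes from the following identities together with \eqref{eq:resp-concrete-low-vtilde} and \eqref{eq:resp-concrete-low-positive}:
\[
1+\alpha_{ij}=\widetilde v_{ij}/v_{ij}\ge c_j/C_j>0,
\qquad
1+(1-s)\alpha_{ij}=\bigl(v_{ij}+(1-s)\Delta\sigma_{ij}\bigr)/v_{ij}\ge c_j/C_j.
\]
The upper bounds follow similarly from \eqref{eq:resp-concrete-low-v} and \eqref{eq:resp-concrete-low-vtilde}. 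We have $1+\alpha_{ij}\le C_j/c_j$, and $|\alpha_{ij}|\le |\Delta\sigma_{ij}|/c_j\le (C_j+C_j)/c_j$, since $|\Delta\sigma_{ij}|=|\widetilde v_{ij}-v_{ij}|\le C_j$. Hence $1+(1-s)\alpha_{ij}\le 1+9\cdot 2C_j/c_j$. Finally, \eqref{eq:resp-concrete-low-m} gives $|\zeta_{ij}|\le M_j/\sqrt{c_j}$.

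So on each low mode the pair $(\alpha,\zeta)$ lies in a compact subset of $\mathcal U_s$: the arguments of both logarithms are pinched between positive constants, the denominator $1+(1-s)\alpha$ is at least $c_j/C_j$, and $\zeta$ is bounded. Continuity of $F_s$ then bounds $|F_s|$ by a constant depending only on $(c_j,C_j,M_j,s)$. Summing over the finitely many $j<J$ and $s\in S$ gives $C_{\mathrm{low}}<\infty$. Proposition~\ref{prop:resp-density-log}, combined with Corollary~\ref{cor:resp-density}, then yields uniform bounds on $\mathcal D$ and $\mathcal R$.

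The main obstacle is these low-mode compactness estimates. The key point is that each bound uses only $c_j,C_j,M_j$ and never depends on $i$; otherwise the supremum over countable $I$ could fail. Once they are in place, Corollary~\ref{cor:resp-structural} concludes the proof. Explicitly, the quantity in Proposition~\ref{prop:responsibility-main} is at most $\beta_*^{-2}\mathcal D^{1/4}\mathcal R^{1/4}\mathcal S_{\mathrm{pair}}^{1/2}$, which is uniformly bounded.
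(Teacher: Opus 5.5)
Your proposal is correct and follows the paper's proof essentially step for step. Like the paper, you obtain $\beta_*>0$ and $\beta^*<\infty$ from \eqref{eq:resp-concrete-weight}, bound $\mathcal D$ and $\mathcal R$ uniformly via Proposition~\ref{prop:resp-density-log} and Corollary~\ref{cor:resp-density}, identify \eqref{eq:resp-concrete-score} with \eqref{eq:resp-score-condition} so that Proposition~\ref{prop:resp-score} bounds $\mathcal S_{\mathrm{pair}}$, and conclude with Corollary~\ref{cor:resp-structural}. The one addition is your explicit low-mode compactness argument, whose bounds depend only on $(c_j,C_j,M_j)$ and not on $i$; this spells out a step the paper only asserts.
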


\begin{proof}
We verify that the assumptions imply the three uniform bounds entering
Corollary~\ref{cor:resp-structural}.

First, \eqref{eq:resp-concrete-weight} gives both
\[
\beta_*:=\inf_{i\in I}\beta_i>0
\qquad
\text{and}
\qquad
\beta^*:=\sup_{i\in I}\beta_i<\infty.
\]

We next control $\mathcal D(t,d)$ and $\mathcal R(t,d)$. Recall the normalized
perturbations
\[
\alpha_{ij}(t)
=
\frac{\Delta\sigma_{ij}}{\sigma_{ij}+\kappa_t\lambda_j},
\qquad
\zeta_{ij}(t)
=
\frac{\Delta m_{ij}}{\sqrt{\sigma_{ij}+\kappa_t\lambda_j}}.
\]
For the low modes $j=1,\dots,J-1$, the bounds
\eqref{eq:resp-concrete-low-v}--\eqref{eq:resp-concrete-low-positive}
ensure that the functions
\[
F_s\bigl(\alpha_{ij}(t),\zeta_{ij}(t)\bigr),
\qquad
s\in\{-8,0,1,\dots,8\},
\]
are well defined and uniformly bounded in $i\in I$ and $t\in[0,T]$. Since
there are only finitely many such modes, this gives the low-mode control
required in Proposition~\ref{prop:resp-density-log}.

For the tail modes, \eqref{eq:resp-concrete-tail-small} gives
\[
|\alpha_{ij}(t)|\le \rho_{\mathrm{tail}},
\qquad
|\zeta_{ij}(t)|\le \rho_{\mathrm{tail}},
\qquad
\rho_{\mathrm{tail}}\in(0,1/9),
\]
while \eqref{eq:resp-concrete-tail-sum} is exactly the required uniform
summability of
\[
\alpha_{ij}(t)^2+\zeta_{ij}(t)^2.
\]
Therefore Proposition~\ref{prop:resp-density-log} applies. Together with
$\beta^*<\infty$, Corollary~\ref{cor:resp-density} yields
\[
\sup_{t\in[0,T]}\sup_{d\ge1}\mathcal D(t,d)<\infty,
\qquad
\sup_{t\in[0,T]}\sup_{d\ge1}\mathcal R(t,d)<\infty.
\]

It remains to control $\mathcal S_{\mathrm{pair}}(t,d)$. By
\eqref{eq:a-def-resp},
\[
a_{i\ell,j}(t)^2v_{ij}(t)
=
\frac{
\big((\sigma_{\ell j}-\sigma_{ij})
+
(\Delta\sigma_{\ell j}-\Delta\sigma_{ij})\big)^2
(\sigma_{ij}+\kappa_t\lambda_j)
}{
(\sigma_{ij}+\Delta\sigma_{ij}+\kappa_t\lambda_j)^2
(\sigma_{\ell j}+\Delta\sigma_{\ell j}+\kappa_t\lambda_j)^2
},
\]
and by \eqref{eq:mu-def-resp},
\[
\mu_{i\ell,j}(t)
=
\frac{\Delta m_{ij}}
{\sigma_{ij}+\Delta\sigma_{ij}+\kappa_t\lambda_j}
-
\frac{m_{\ell j}-m_{ij}+\Delta m_{\ell j}}
{\sigma_{\ell j}+\Delta\sigma_{\ell j}+\kappa_t\lambda_j}.
\]
Thus \eqref{eq:resp-concrete-score} is precisely the sufficient condition
\eqref{eq:resp-score-condition}. Proposition~\ref{prop:resp-score} therefore
gives
\[
\sup_{t\in[0,T]}\sup_{d\ge1}
\mathcal S_{\mathrm{pair}}(t,d)<\infty.
\]

We have shown that $\beta_*>0$, that $\mathcal D(t,d)$ and
$\mathcal R(t,d)$ are uniformly bounded, and that
$\mathcal S_{\mathrm{pair}}(t,d)$ is uniformly bounded. The conclusion follows
from Corollary~\ref{cor:resp-structural}.
\end{proof}

The proposition shows explicitly how the responsibility mismatch is controlled.
The change in the responsibilities is governed by the relative covariance
perturbations
\[
\frac{\Delta\sigma_{ij}}{\sigma_{ij}+\kappa_t\lambda_j}
\]
and the normalized mean perturbations
\[
\frac{\Delta m_{ij}}{\sqrt{\sigma_{ij}+\kappa_t\lambda_j}}.
\]
The remaining factor is the pairwise variation of the perturbed component scores,
\[
\widetilde S_{i,t}^d-\widetilde S_{\ell,t}^d,
\]
weighted coordinate by coordinate by the preconditioner spectrum $(\gamma_j)$.
Thus sufficient decay of $(\gamma_j)$ prevents these pairwise component-score
differences from accumulating over the tail coordinates.

%%%%%%%%%%
%%%%%%%%%%
%%%%%%%%%%

Finally, defining
\begin{equation}\label{eq:Bresp-definition}
\mathsf B_{\mathrm{resp}}^d(t)
:=
\beta_*^{-2}
\mathcal D(t,d)^{1/4}
\mathcal R(t,d)^{1/4}
\mathcal S_{\mathrm{pair}}(t,d)^{1/2},
\end{equation}
Proposition~\ref{prop:responsibility-main} gives
\[
\mathbb E_{\rho_t^d}
\left[
\big\|(\Gamma^d)^{1/2}I_2(X,t)\big\|^2
\right]
\le
\mathsf B_{\mathrm{resp}}^d(t),
\]
which proves \eqref{eq:second-term-bound-main}.

% ------------------------------------------------------------

\section{Annealed Classifier Free Guidance}\label{app:CFG} 
 
We have discussed annealing in the context of sampling from a target distribution
via a forward diffusion by an appropriate choice of a time dependent drift. 
We remark here that annealing also may be useful in the context of Classifier Free Guidance
 when we also condition on a class label. 
 
In order to relate to the above discussion we  write the annealed Langevin diffusion in (\ref{eq:ALD}) as 
\[
   dX^d_t =  v(t,X^d_t)\,dt
         + \sqrt{2}\,dW^d_t, \qquad t \in [0,T],
\] 
where we have chosen  $\Gamma^d$ to be the identity and we have 
\[
 v(t,X) = \nabla \log \rho^d_t (X) =  \nabla \log
 \left( \rho^d_\star * \mathcal{N}\!\left(0,\frac{T-t}{T}\,C^d\right) \right) ,
\]
with  $\rho^d_\star$ the multimodal Gaussian mixture in (\ref{marginal}).  
By appropriate choice of the time dependent  drift via the selection of the time horizon 
$T$ we can show that the distribution of $X^d_T$ is close to $\rho^d_\star$ in 
Kullback-Leibler (KL) divergence.    
%Recall also that 
%for a certain tolerance in  KL-divergence $T$
%can be chosen independently of the dimension $d$. 

To contrast this with diffusion generative modeling consider  $p^d_t$ the distribution of $Y^d$ for 
$Y^d$ an    Ornstein-Uhlenbeck process solving 
\[
    dY^d_t =  -   Y^d_t\,dt
         + \sqrt{2}\,dW^d_t, \qquad t \in [0,T],
\]
  with $p^d_0=\rho^d_\star$.
 Then for a Gaussian noise initial condition, $Y_0 \sim  \mathcal{N}\!\left(0,{\mathcal I}^d\right)$, 
and $Y^d$ solving in reverse  time  $\tau=T-t$  the SDE
\[
  dY^d_\tau =   w(\tau,Y^d_\tau)\,d\tau  
         + \sqrt{2}\,dW^d_t, \qquad \tau \in [0,T],
\] 
one can show that for $T$ large  the distribution $Y^d_T$ is close to $\rho^d_\star$ for the choice 
\[
w(\tau,Y)= Y   + 2 S(\tau,Y)
\]
with  the score function defined by $S(\tau,Y)=\nabla \log p^d_{T-\tau} (Y)$ \cite{baldassari2023conditional}. 

 In Classifier Free Guidance (CFG) \cite{Ho22}  
one considers  conditional generation via conditioning on observation of a  class label $c$   and replace $w$ by
\[
w^{\rm CFG}(\tau,Y)= Y   + 2  S^{\rm CFG}(\tau,Y),
\]
for
\[
S^{\rm CFG}(\tau,Y) =  \nabla \log p^d_{T-\tau} (Y|c)  + 
 \omega  \left(     \nabla \log p^d_{T-\tau} (Y|c) 
 - \nabla \log p^d_{T-\tau} (Y) \right) 
  . 
\]
 Here $\omega$  is a  `guidance scale' and by increasing this  one can generate samples
more aligned with the data. In fact a relatively large guidance scale  relaxation 
  $\omega  >  0$ is often considered.  However, a large guidance scale steers the samples toward
a high likelihood mode at the cost of reducing sample diversity and fidelity \cite{Astolfi24}. 
It is  clear that the distribution induced by CFG in general modifies the 
(conditional) target distribution and recent work shows  in particular that 
in case of Gaussian mixtures in one and finite dimensions, it results in general  in a sharper
distribution than the target one \cite{Nakkiran24,Pavasovic25,Wu24}. 

In order to improve the performance of CFG with a constant guidance scale recent works 
have considered annealing CFG where the guidance scale is chosen to be time dependent
\cite{Pavasovic25,Yehezkel25,Wang24}. One may then start with a large guidance 
scale to promote rapid alignment with data
and then reduce the guidance scale close to terminal time $T$ 
to promote  alignment with the target distribution. In this manner one can achieve both 
prompt alignment as well as high fidelity with the target distribution.  
 
We remark also that in \cite{Pavasovic25} the authors discuss what they refer to as 
`blessing-of-dimensionality' that in fact CFG generates samples with the right  target distribution
in the limit of high dimension $d$. They do this in the setting of a (centered) two mode gaussian mixture 
of equal strength, a setting analogous to the one we considered in the motivating example in the introduction, 
but with equal weights for the modes. 
In a first phase, up until  what the authors  refer to as a specification time the   CFG correction
serves to push the trajectory in the direction of the mean vector. After the specification time  the 
effect of the CFG correction becomes negligible,  in the limit of high dimension, thus explaining 
that CFG may generate high fidelity samples. The authors also introduce a power law form of the
CFG score correction motivated by this  observation and find that this improves fidelity
and which effectively gives a time dependent guidance scale.
In \cite{Yehezkel25} a learning algorithm is introduced to identify a time dependent 
score which also introduce an effective score that in general is nonlinear in the difference between
the conditional and unconditional scores.  Finally,  in \cite{Wang24}  various guidance scales that are only 
time dependent were considered and the authors found that the choice
\[
\omega(\tau)   =     \omega_0 \left( \frac{T-\tau}{T} \right) , 
\]
gave essentially the best performance among those tried.  
The annealing schedule for the  classifier Free Guidance then starts with a simulation phase where  the diffusion is driven toward the data distribution and with a monotonous  predetermined tapering. This is analogous to the situation
considered in this paper.

\section{Further Details on the Experiments in Section~\ref{sec:numerics}}\label{app:numerics-details}
This appendix collects implementation details and additional diagnostics for the experiments of
Section~\ref{sec:numerics}. All figures were generated in Google Colab (13GB RAM).

\subsection{Targets}
In the experiments of Section~\ref{sec:numerics}, the target at truncation level $d$ is a two-component diagonal Gaussian mixture on $\mathbb{R}^d$,
\[
\rho_\star^d \;=\; w_1\,\mathcal N(m_1^d,\Sigma_1^d)\;+\;w_2\,\mathcal N(m_2^d,\Sigma_2^d),
\qquad w=(0.75,0.25),
\]
with separated means $m_1^d=(0,\ldots,0)$ and $m_2^d=(10,0,\ldots,0)$. %e_1$.

For Figure~\ref{fig:bias-error-vs-dimension}, the component covariances are of
the form
\[
\Sigma_1^d=\tau_1\Sigma^d,
\qquad
\Sigma_2^d=\tau_2\Sigma^d,
\qquad
\Sigma^d=\mathrm{Diag}(j^{-1.25})_{j=1}^d,
\]
with $\tau_1=1.2$ and $\tau_2=2$.

For Figure~\ref{fig:score-error-vs-dimension}, the component covariances are
identical:
\[
\Sigma_1^d=\Sigma_2^d=\Sigma^d,
\qquad
\Sigma^d=\mathrm{Diag}(j^{-2})_{j=1}^d.
\]

\subsection{Implementation of  Preconditioned ALD}
We implement preconditioned ALD for the experiments as an Euler--Maruyama discretization of the time-inhomogeneous diffusion
\[
dX^d_t \;=\;\Gamma^d \,\nabla\log\rho^d_t(X^d_t)\,dt \;+\;\sqrt{2\Gamma^d}\,dW^d_t,
\]
where $\Gamma^d=\mathrm{Diag}(\gamma_j)_{j=1}^d$ is the diagonal preconditioner and
$\rho^d_t$ denotes the Gaussian-smoothed mixture along the annealing path.
In discrete time, with stepsize $\Delta t$ and $N$ steps, the iteration is
\[
X^d_{k+1}
\;=\;
X^d_k
+\Delta t\,\Gamma^d\,s^d_{\theta_k}(X^d_k)
+\sqrt{2\Delta t\,\Gamma^d}\,\xi^d_k,
\qquad \xi^d_k\sim\mathcal N(0,I_d).
\]
In the exact-score experiment, \[ s^d_{\theta}(x)=\nabla\log(\rho_\star^d * \mathcal N(0,\theta C^d))(x),
\] 
where $C^d=\mathrm{Diag}(\lambda_j)_{j=1}^d$ is the diagonal smoothing operator. In the score-error experiment of Figure~\ref{fig:score-error-vs-dimension}, this
analytic score is replaced by the analytic score of the misspecified mixture
described below.

We use the linear schedule
\[
\theta_k \;=\; 2S\Big(1-\frac{k}{N-1}\Big), \qquad k=0,\dots,N-1,
\]
so $\theta_0=2S$ and $\theta_{N-1}=0$. In the experiments,
$S=20, \Delta t=9\times 10^{-3}, N=20000, T_{\mathrm{cont}}:=(N-1)\Delta t\approx 1.80\times 10^2.$

\subsection{ALD Diffusion Design Choices: $\Gamma^d$ and  $C^d$}
Both the preconditioner and smoothing are chosen as diagonal power laws, possibly up to constant factors,
\[
\Gamma^d=\mathrm{Diag}(\gamma_j)_{j=1}^d,
\qquad
\gamma_j \propto j^{-\alpha_{\mathrm{pre}}},
\qquad
C^d=\mathrm{Diag}(\lambda_j)_{j=1}^d,
\qquad
\lambda_j \propto j^{-\alpha_{\mathrm{smooth}}}.
\]
In Figure~\ref{fig:bias-error-vs-dimension}, we contrast two configurations:
\[
(\Gamma^d, C^d)
=
\big(\mathrm{Diag}(j^{-1.5})_{j=1}^d,\; \mathrm{Diag}(j^{-2.7})_{j=1}^d\big),
\qquad
(\Gamma^d, C^d) = (I_d,\; I_d),
\]
with initial smoothing scale $2S=40$ in both cases.

In Figure~\ref{fig:score-error-vs-dimension}, we fix 
\[ 
C^d= \mathrm{Diag}(j^{-4})_{j= 1}^d
\]
and vary only the preconditioner:
\[
\Gamma^d=\mathrm{Diag}(j^{-3.5})_{j=1}^d,
\qquad
\Gamma^d=\mathrm{Diag}(j^{-1})_{j=1}^d.
\]
The first choice satisfies the sufficient perturbative conditions used in the
analysis, whereas the second violates the component-score summability condition.

\subsection{Initialization Choices}
In Figure \ref{fig:bias-error-vs-dimension}, we initialize from the smoothed target law
\[
X^d_0 \sim 0.75\,\mathcal N(m_1^d, \tau_1 \Sigma ^d +2SC^d) + 0.25\,\mathcal N(m_2^d,\tau_2 \Sigma^d +2SC^d),
\]
with 
\[ 
C^d=\mathrm{Diag}(j^{-2.7})_{j=1}^d, \qquad 2S=40, \qquad \tau_1 =1.2, \qquad \tau_2=2, \qquad \Sigma^d = \mathrm{Diag}( j^{-1.25})_{j=1}^d.
\]

In Figure \ref{fig:score-error-vs-dimension}, to isolate the role of preconditioning under score error, we use the same initialization for both
preconditioners. Specifically, we initialize the ALD diffusion from a mixture
whose component means and covariances coincide with those of the target, but
whose mixture weights are incorrect:
\[
X^d_0
\sim
0.1\,\mathcal N(m_1^d,\Sigma^d)
+
0.9\,\mathcal N(m_2^d,\Sigma^d),
\qquad
\Sigma^d=\mathrm{Diag}(j^{-2})_{j=1}^d.
\]
Thus,
relative to the target mixture, the initialization mismatch is confined to the
mixture weights. Relative to the smoothed initial law of the annealing path, there
is also a covariance mismatch, but it is uniformly controlled in $d$ because
$(2S\lambda_j)/\sigma_j=40j^{-2}$ is square-summable.

\subsection{Score-Error Model in Figure \ref{fig:score-error-vs-dimension}: Covariance-Only Perturbations}
To simulate the score error while keeping the setting analytically transparent, we compute the drift using the exact
score of a  misspecified mixture in which only the diagonal covariances are perturbed, while the weights and means remain
unchanged. Concretely, if 
\[ 
\Sigma_i^d=\mathrm{Diag}(\sigma_{ij})_{j=1}^d,
\] 
we set 
\[ 
\widetilde\Sigma_i^d=\mathrm{Diag}(\widetilde\sigma_{ij})_{j=1}^d, \qquad \widetilde\sigma_{ij}=\sigma_{ij}+\Delta\sigma_{ij},
\] 
with the covariance-only perturbation 
\[ 
\Delta\sigma_{ij} = 0.05\,j^{-3}.
\] 
The ALD drift is then \[
\Gamma^d \nabla\log\bigl(\widetilde\rho_\star^d*\mathcal N(0,\theta C^d)\bigr).
\]
This construction aligns with the covariance-perturbation error model of Section~\ref{sec:error-analysis}.

\subsection{KL Estimation via $k$NN}
To estimate $\mathrm{KL}(\rho_\star^d\,\|\,\rho^{\mathrm{ALD},d}_T)$, we generate $n=2500$ samples $X^{(p)}_1,\dots,X^{(p)}_n\sim\rho_\star^d$ and $m=2500$ samples $X^{(q)}_1,\dots,X^{(q)}_m\sim\rho^{\mathrm{ALD},d}$,
and apply a fixed-$k$ nearest-neighbor estimator of $\mathrm{KL}(P\|Q)$ \cite{perez2008kullback, wang2009divergence}.
Below we report plots for $k\in\{20,50,80\}$ for both Figures \ref{fig:bias-error-vs-dimension} and \ref{fig:score-error-vs-dimension} of the paper to confirm that the qualitative trends are robust with respect to the neighborhood size (the main text reports $k=20$).

\begin{figure}[H]
  \centering
  \begin{minipage}[t]{0.49\linewidth}
    \centering
    \includegraphics[width=\linewidth]{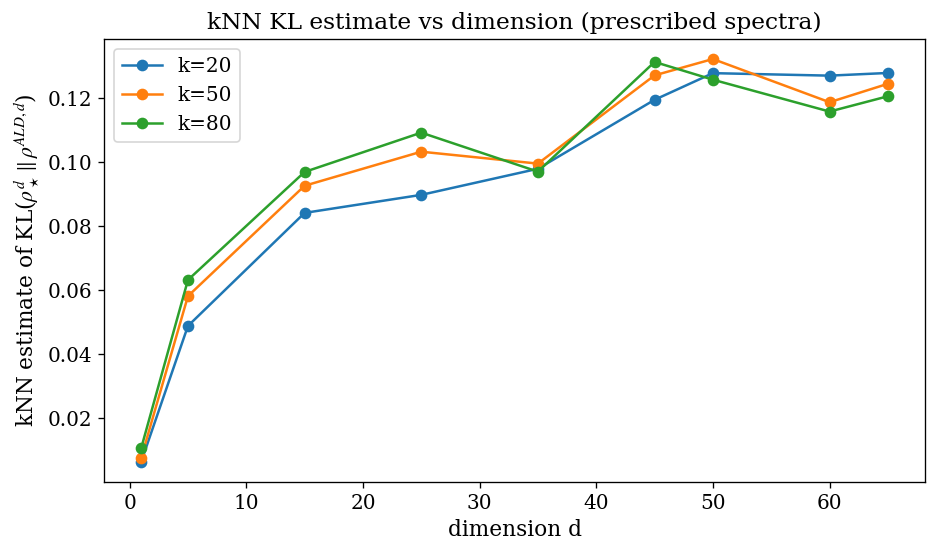}
    \label{fig:robustness-k-a-bias}
  \end{minipage}\hfill
  \begin{minipage}[t]{0.49\linewidth}
    \centering
    \includegraphics[width=\linewidth]{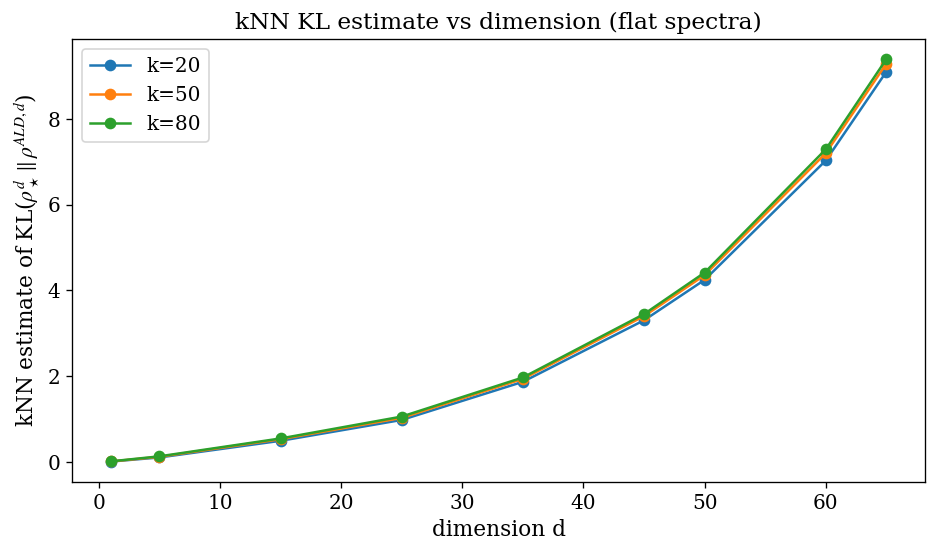}
    \label{fig:robustness-k-b-bias}
  \end{minipage}
\caption{Supplementary plots for Figure~\ref{fig:bias-error-vs-dimension}. We report the $k$NN estimate for $k\in\{20,50,80\}$ (the main text reports $k=20$), illustrating that the observed trend is stable across these choices of $k$.} 
\end{figure}

\begin{figure}[H]
  \centering
  \begin{minipage}[t]{0.49\linewidth}
    \centering
    \includegraphics[width=\linewidth]{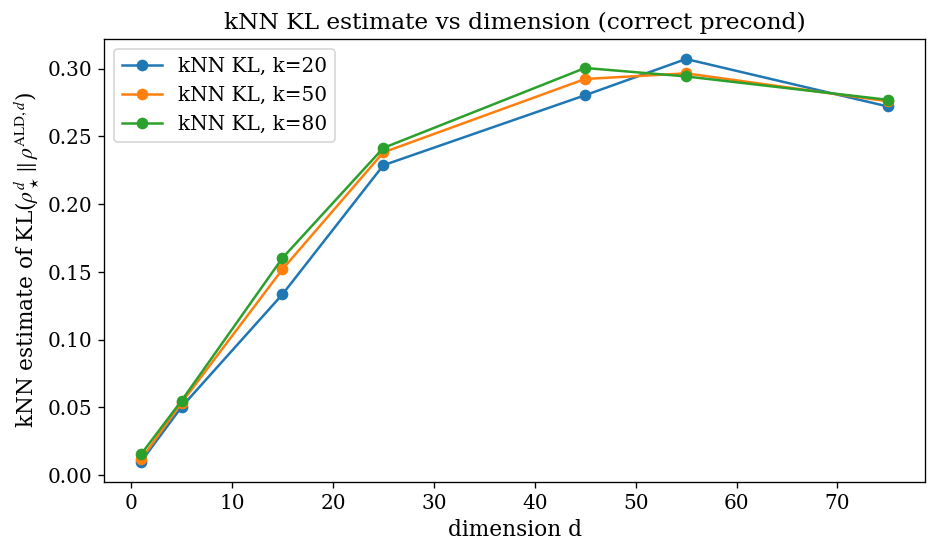}
    \label{fig:robustness-k-a}
  \end{minipage}\hfill
  \begin{minipage}[t]{0.49\linewidth}
    \centering
    \includegraphics[width=\linewidth]{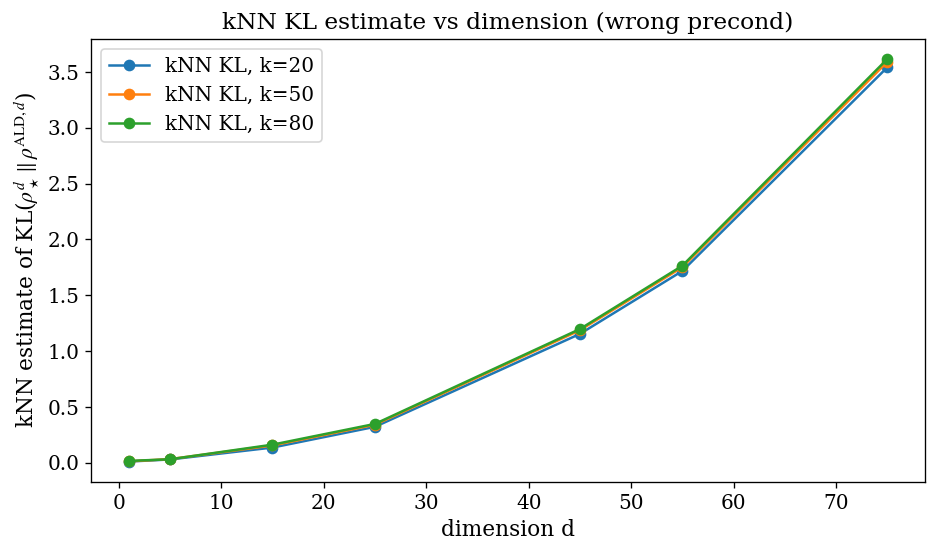}
    \label{fig:robustness-k-b}
  \end{minipage}
\caption{Supplementary plots for Figure~\ref{fig:score-error-vs-dimension}. We report the $k$NN estimate for $k\in\{20,50,80\}$ (the main text reports $k=20$), illustrating that the observed trend is stable across these choices of $k$.}
\end{figure}

\end{document}